\newcommand{\N}{\mathbb{N}}
\newcommand{\Z}{\mathbb{Z}}
\newcommand{\R}{\mathbb{R}}
\newcommand{\F}{\mathcal{F}}
\newcommand{\deffct}[5]{#1: \begin{array}{ccc}
    #2 & \to & #3  \\
     #4 & \mapsto & \displaystyle #5
\end{array}}
\newcommand{\trajb}[1]{\overline{\mathcal{L}}(#1)}
\newcommand{\wb}[2]{\overline{W}^{#1}(#2)}
\newcommand{\wbi}[3]{\overline{W}^{#1}_{#2}(#3)}
\newcommand{\fibration}{E \overset{\pi}{\to} X}
\newcommand{\Crit}{\textup{Crit}}
\newcommand{\Or}{\textup{Or}}
\newcommand{\Id}{\textup{Id}}
\newcommand{\ev}{\textup{ev}}
\newcommand{\blue}[1]{\textcolor{blue}{#1}}
\newcommand{\ftimes}[2]{\hspace{0.1em} _{#1}\!\times_{#2}}
\newcommand{\ls}[1]{\mathcal{L} #1}
\newcommand{\CS}{\textup{CS}_{DG}}
\newcommand{\m}{\mathbf{\tilde{m}}}
\newcommand{\COY}{C_*(\Omega Y)}
\newcommand{\COYi}[1]{C_*(\Omega Y^{#1})}
\newcommand{\PP}{\textup{PP}_{DG}}
\newcommand{\functraj}[2]{\overline{\mathcal{M}}^{#1}(#2)}
\renewcommand{\qed}{\hfill$\blacksquare$}
\renewenvironment{proof}{\begin{addmargin}[1em]{0em}\begin{newproof}}{\end{newproof}\end{addmargin}\qed}
\definecolor{darkgreen}{RGB}{0,168,0}
\newtheorem{thm}{Theorem}[section]
\newtheorem{defi}[thm]{Definition}
\newtheorem{rem}[thm]{Remark}
\newtheorem{prop}[thm]{Proposition}
\newtheorem{cor}[thm]{Corollary}
\newtheorem{lemme}[thm]{Lemma}
\newtheorem{theorem}{Theorem}
\newtheorem{proposition}[theorem]{Proposition}
\author{Robin Riegel \footnote{email : r.riegel@math.unistra.fr} \\
IRMA, Université de Strasbourg}
\date{}
\title{The Path-product in Morse Homology with differential graded coefficients}
\begin{document}
\selectlanguage{english}
\renewcommand{\contentsname}{Table of Contents}
\renewcommand{\proofname}{Proof}
\renewcommand{\refname}{References}
\renewcommand{\figurename}{Figure}
\setlength{\parindent}{0pt}
\maketitle

\begin{center}

    \textbf{Abstract} \\

\end{center}

We will use the tools developed in \cite{Rie24} to give a Morse-theoretical description of a string topology product on the homology of the space of paths in a manifold $Y$ with endpoints in a submanifold $X$ and a module structure on this homology over the Chas-Sullivan ring of $Y$. These operations have been defined and studied by Stegemeyer in \cite{Max23} in singular homology.

\tableofcontents
\newpage

\section{Introduction}

Let $(Y^k,\star_Y)$ be a pointed, oriented, closed, and connected manifold and let $(X^n,\star_X)$ be a pointed, oriented, closed, and connected submanifold of $Y$.
The goal of this article is to use Morse Homology with differential graded coefficients defined and studied by Barraud-Damian-Humilière-Oancea in \cite{BDHO23} and the tools developed in \cite{Rie24} in order to give a Morse model for the \textbf{Path-product} $$\Lambda :H_i(\mathcal{P}_{X} Y) \otimes H_j(\mathcal{P}_{X} Y) \to H_{i+j-n}(\mathcal{P}_{X} Y) $$ defined by Stegemeyer in \cite{Max23}, where $$\mathcal{P}_X Y = \{ \gamma : [0,a] \to Y \textup{ continuous }, \ \gamma(0),\gamma(a) \in X\}.$$ We start by a brief introduction of the main tools that we will use in this article.

\subsection{Context}

The authors of \cite{BDHO23} construct \textbf{Morse complex of $X$ with coefficients in a differential graded (DG) module} $(\F_*, \partial_{\F})$ over the differential graded algebra (DGA) $R_* = C_*(\Omega X)$ of the cubical complex of the space of loops in $X$ based at $\star$. This is also referred to as an \textbf{enriched Morse complex}. Given a Morse function $f : X \to \R$ and $\xi$, a pseudo-gradient adapted to $f$, this complex writes 

$$C_*(X,\F_*) = \F_* \otimes \Z \Crit(f),$$

where the differential is twisted by a family of chains called, in reference to the seminal paper \cite{BC07}, \textbf{Barraud-Cornea twisting cocycle}

$$\left\{ m_{x,y} \in C_{|x|-|y|-1}(\Omega X), x,y \in \Crit(f) \right\}.$$

The cocycle $(m_{x,y})_{x,y}$ is obtained by evaluating in $\Omega X$ representatives of the fundamental classes of the moduli spaces of Morse trajectories $\trajb{x,y}$. The twisted differential is given by

$$\partial (\alpha \otimes x) = \partial_{\F} \alpha \otimes x + (-1)^{|\alpha|} \sum_y \alpha \cdot m_{x,y} \otimes y.$$

In \cite{Rie24}, we make use of their construction and, in particular, of one of their main results, the Fibration Theorem, in order to build a Morse theoretic Chas-Sullivan product $$\CS : H_*(E)^{\otimes 2} \to H_*(E),$$ for any (Hurewicz) fibration $\fibration$ endowed with a morphism of fibrations $m : E \ftimes{\pi}{\pi} E \to E$.

Given a fibration $E \overset{\pi}{\to} X$, a \textbf{transitive lifting function} $\Phi : E \ftimes{\pi}{\ev_0} \mathcal{P}X \to E$ is a map that lifts all paths in $X$ and respects concatenation in the sense that lifting a concatenation of two paths is the same as lifting them one after the other : $\Phi(\Phi(e,\gamma),\delta) = \Phi(e,\gamma \#\delta).$
Let $F = \pi^{-1}(\star_X)$. The restriction $\Phi : F \times \Omega X \to F$ induces a $C_*(\Omega X)$-module structure on the cubical complex $C_*(F)$.\\

\textbf{Fibration Theorem} (\cite[Theorem 7.2.1]{BDHO23})
    \emph{Let $\fibration$ be a fibration with model fiber $F=\pi^{-1}(\star)$ and equip $C_*(F)$ with the $C_*(\Omega X)$-module structure induced by a transitive lifting function $\Phi : E \ftimes{\pi}{\ev_0} \mathcal{P}X \to E$ associated with this fibration. Then, there exists a quasi-isomorphism denoted $$\Psi_E : C_*(X, C_*(F)) \to C_*(E).$$}

\paragraph{DG Chas-Sullivan product}\label{CSDG}(\cite[Theorem 7.1]{Rie24})
\emph{Let $\fibration$ be a fibration endowed with a morphism of fibrations $m : E \ftimes{\pi}{\pi} E \to E$. Let $F = \pi^{-1}(\star)$ and endow $\F = C_*(F)$ with a DG right $C_*(\Omega X)$-module structure induced by a transitive lifting function. There exists a degree $-n$ product
$$\CS : H_*(X, \F)^{\otimes 2} \to H_*(X, \F)$$}

\emph{that is associative if $m_*$ is associative in homology, graded commutative if $m_*$ is commutative in homology, admits a neutral element if there exists a unit section $s : X \to E$ and this product satisfies the Functoriality and Spectral sequence properties.}

\emph{Moreover, this product corresponds in homology, via the Fibration Theorem, to the product $\mu_* : H_i(E) \otimes H_j(E) \to H_{i+j-n}(E)$ defined by Gruher-Salvatore in \cite{GS07}. In particular, if the fibration is the loop-loop fibration $\Omega X \hookrightarrow \mathcal{L}X \to X$, then $\CS$ corresponds to the Chas-Sullivan product.}

\vspace{0.5em}
%\textbf{DG Chas-Sullivan product }(\cite[Theorem 7.1]{Rie24})

The tools and techniques used to build such a product also enable to build a Morse model $$ \PP : H_i(X \times X, C_*(\Omega Y)) \otimes H_j(X \times X, C_*(\Omega Y)) \to H_{i+j-n}(X \times X, C_*(\Omega Y)) $$ for the Path-product $$\Lambda :H_i(\mathcal{P}_{X} Y) \otimes H_j(\mathcal{P}_{X} Y) \to H_{i+j-n}(\mathcal{P}_{X} Y) $$ and a module structure $$\cdot_M : H_i(Y,C_*(\Omega Y)) \otimes H_j(X \times X, C_*(\Omega Y)) \to H_{i+j-k}(X \times X, C_*(\Omega Y)),$$ where  $H_*(Y,C_*(\Omega Y))$ is endowed with the ring structure $\CS.$

\subsection{The Path-product}

Let $Y$ be a topological space, let $X^n$ be a pointed, oriented, closed, and connected manifold and let $f : X \to Y$ be a continuous map. Let $\mathcal{P}Y = \{ \gamma : [0,a] \to Y, \textup{ continuous}\}$ be the space of Moore paths. Consider the pullback fibration $$\Omega Y \hookrightarrow \mathcal{P}_{X,f} Y \overset{(\ev_0, \ev_1)}{\to} X^2$$ of the fibration  $ \Omega Y \hookrightarrow \mathcal{P} Y\overset{(\ev_0, \ev_1)}{\to} Y^2$
 by $f \times f : X^2 \to Y^2.$ The Fibration Theorem \cite[Theorem 7.2.1]{BDHO23} implies that $$H_*(X^2, (f \times f)^*C_*(\Omega Y)) \cong H_*(\mathcal{P}_{X,f} Y).$$ For readability, we will often denote $H_*(X^2, \COY ;f)$ this homology or $H_*(X^2, \COY)$ when $f$ is implied. We will use similar notations for the complexes of chains.
The construction by Stegemeyer in \cite{Max23} of a degree $-n$ product
$$\Lambda :H_*(\mathcal{P}_{X,f} Y)^{\otimes 2} \to H_*(\mathcal{P}_{X,f} Y) $$

relies on the idea of "intersecting on the base" to obtain concatenable paths and concatenating. This idea is similar to the construction of the Chas-Sullivan product but does not fall under the category of products defined in \cite[Theorem 7.1]{Rie24} since the intersection does not occur along the diagonal $\Delta : X^2 \to X^4$ but rather along $D : X^3 \to X^4$, $D(a,b,c) = (a,b,b,c)$. In other words, the concatenation of interest is $$m : \mathcal{P}_{X,f} Y \ftimes{\ev_1}{\ev_0} \mathcal{P}_{X,f} Y \to  \mathcal{P}_{X,f} Y$$
and not  $$\mathcal{P}_{X,f} Y \ftimes{(\ev_0,\ev_1)}{(\ev_0,\ev_1)} \mathcal{P}_{X,f} Y \to  \mathcal{P}_{X,f} Y.$$
The concatenation $m$ is not a morphism of fibrations in the sense defined in \cite[Section 5]{Rie24} since  $\mathcal{P}_{X,f} Y\ftimes{\ev_1}{\ev_0} \mathcal{P}_{X,f} Y$ is seen as the total space of a fibration over $X^3$ and $\mathcal{P}_{X,f} Y$ is seen as the total space of a fibration over $X^2$. Therefore, in order to define a Morse equivalent of this map, we generalize \cite[Theorem 5.8]{Rie24} for a broader class of morphisms that we define, \emph{i.e} \textbf{relative morphism of fibrations}.

\begin{defi}\label{defi : relative morphism of fibrations}
    Let $Z_0$ and $Z_1$ be topological spaces and let $E_0 \overset{\pi_0}{\to} Z_0$ and $E_1 \overset{\pi_1}{\to} Z_1$ be fibrations. Let $g : Z_0 \to Z_1$ be a continuous map. We say that $\varphi : E_0 \to E_1$ is a \textbf{morphism of fibrations relative to $g$} if $\pi_1 \circ\varphi = g \circ \pi_0.$
$$ \xymatrix{
    E_0 \ar[r]^{\varphi} \ar[d]_{\pi_0} & E_1 \ar[d]^{\pi_1} \\
    Z_0 \ar[r]_{g} & Z_1.
    }$$
\end{defi}

The following result generalizes \cite[Theorem 5.8]{Rie24} using \cite[Proposition 9.8.1]{BDHO23}.

\begin{theorem}[Theorem \ref{cor : morphism of fibrations relative to ...}]\label{theorem C'}
    Let $Z_0$ and $Z_1$ be two pointed, closed, and connected manifolds. Let $F_0 \hookrightarrow E_0 \to Z_0$ and $F_1 \hookrightarrow E_1 \to Z_1$ be two fibrations. If $\varphi: E_0 \to E_1$ is a morphism of fibrations relative to a continuous map $g : Z_0 \to Z_1$, then there exists a morphism of complexes $\tilde{\varphi} : C_*(Z_0, C_*(F_0)) \to C_*(Z_0, g^*C_*(F_1))$ such that the following diagram commutes up to chain homotopy

    \[
    \xymatrix{
    C_*(Z_0, C_*(F_0)) \ar[r]^-{g_* \circ \Tilde{\varphi}} \ar[d]^{\Psi_0} & C_*(Z_1, C_*(F_1)) \ar[d]_{\Psi_1} \\
    C_*(E_0) \ar[r]_{\varphi_*} & C_*(E_1),
    }
    \] where the morphisms $\Psi_0$ and $\Psi_1$ are the quasi-isomorphisms given by the Fibration Theorem \cite[Theorem 7.2.1]{BDHO23}.
\end{theorem}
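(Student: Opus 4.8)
The plan is to reduce to the non-relative statement \cite[Theorem 5.8]{Rie24} by factoring $\varphi$ through the pullback fibration. Set $g^*E_1 := Z_0 \ftimes{g}{\pi_1} E_1$, with projection $p : g^*E_1 \to Z_0$, $(z,e)\mapsto z$, and canonical map $q : g^*E_1 \to E_1$, $(z,e)\mapsto e$. Since $\pi_1\circ\varphi = g\circ\pi_0$, the assignment $e \mapsto (\pi_0(e),\varphi(e))$ defines a map $\bar\varphi : E_0 \to g^*E_1$ that covers $\Id_{Z_0}$ and satisfies $\varphi = q\circ\bar\varphi$; thus $\bar\varphi$ is a morphism of fibrations over $Z_0$ in the sense of \cite[Section 5]{Rie24}. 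We may assume $g$ preserves basepoints (a pointed representative of its homotopy class changes both sides of the square only up to chain homotopy), so that the model fiber of $p$ is $F_1' = \pi_1^{-1}(\star_{Z_1}) = F_1$. Equip $\pi_1$ with a transitive lifting function and $p$ with the one obtained from it by pullback; a direct check shows that the induced $C_*(\Omega Z_0)$-module structure on $C_*(F_1')$ is the restriction of scalars, along the DGA morphism $g_* : C_*(\Omega Z_0)\to C_*(\Omega Z_1)$, of the given structure on $C_*(F_1)$, i.e. it is exactly $g^*C_*(F_1)$.

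Now apply \cite[Theorem 5.8]{Rie24} to $\bar\varphi : E_0 \to g^*E_1$ over $Z_0$: it yields a chain map
\[
\tilde\varphi : C_*(Z_0, C_*(F_0)) \longrightarrow C_*(Z_0, g^*C_*(F_1))
\]
together with a chain homotopy making the square
\[
\xymatrix{
C_*(Z_0, C_*(F_0)) \ar[r]^{\tilde\varphi} \ar[d]_{\Psi_0} & C_*(Z_0, g^*C_*(F_1)) \ar[d]^{\Psi_{g^*E_1}} \\
C_*(E_0) \ar[r]_{\bar\varphi_*} & C_*(g^*E_1)
}
\]
commute, where $\Psi_{g^*E_1}$ is the Fibration Theorem quasi-isomorphism for $p$. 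It remains to append on the right the base-change square
\[
\xymatrix{
C_*(Z_0, g^*C_*(F_1)) \ar[r]^{g_*} \ar[d]_{\Psi_{g^*E_1}} & C_*(Z_1, C_*(F_1)) \ar[d]^{\Psi_1} \\
C_*(g^*E_1) \ar[r]_{q_*} & C_*(E_1)
}
\]
and to observe that it commutes up to chain homotopy: this is the content of \cite[Proposition 9.8.1]{BDHO23} applied to the base change $g$ carrying the fibration $p : g^*E_1\to Z_0$ to $\pi_1 : E_1\to Z_1$ via $q$, the lifting function on $g^*E_1$ being the pullback of the one on $E_1$ (so that the coefficient comparison of the previous paragraph is strict). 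Concatenating the two squares and using $q_*\circ\bar\varphi_* = (q\circ\bar\varphi)_* = \varphi_*$ gives $\Psi_1\circ(g_*\circ\tilde\varphi) \simeq \varphi_*\circ\Psi_0$, which is precisely the square in the statement.

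The main obstacle is the second square, i.e. the naturality of the Fibration Theorem isomorphism under base change. One has to unwind the construction of $\Psi$ in \cite{BDHO23} — representatives of the fundamental classes of the moduli spaces $\trajb{x,y}$ on $Z_0$, mapped into $\Omega Z_0$ and concatenated with fiber chains — and check that pushing these forward along $g$ computes the analogous data for $\pi_1$ on $Z_1$; the required homotopy is then assembled from the usual moduli-spaces-with-boundary cobordism argument, exactly as in the proof of \cite[Theorem 5.8]{Rie24}. This is what \cite[Proposition 9.8.1]{BDHO23} provides, so it suffices to invoke it. The remaining verifications — that $\tilde\varphi$ is a chain map, that $C_*(F_1')\cong g^*C_*(F_1)$ as $C_*(\Omega Z_0)$-modules, that the functoriality morphism $g_*$ on enriched complexes is well defined for a generic choice of Morse data on $Z_0$ relative to that on $Z_1$ (cf. the Functoriality property in \cite[Theorem 7.1]{Rie24}), and that the construction is independent of all choices up to homotopy — are routine consequences of the machinery recalled above.
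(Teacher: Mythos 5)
Your proposal is correct and follows essentially the same route as the paper: factor $\varphi$ through the pullback fibration $g^*E_1 \to Z_0$ via $e \mapsto (\pi_0(e),\varphi(e))$, apply \cite[Theorem 5.8]{Rie24} to the resulting morphism of fibrations over $Z_0$, and invoke \cite[Proposition 9.8.1]{BDHO23} for the base-change square. The paper's own proof is the same two-square concatenation, stated more tersely; your additional remarks on the identification of the pullback fiber's module structure with $g^*C_*(F_1)$ are left implicit there.
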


The concatenation $m : \mathcal{P}_{X,f} Y \ftimes{\ev_1}{\ev_0} \mathcal{P}_{X,f} Y \to \mathcal{P}_{X,f} Y$ is a morphism of fibrations relative to $p : X^3 \to X^2$, $p(a,b,c) = (a,c)$ and we will prove the following: 

\begin{theorem}[Theorem \ref{thm : propriété de base de PPDG}]\label{theorem A PPDG}
     The product $$\textup{PP}_{DG} : H_*(X^2,C_*(\Omega Y))^{\otimes 2} \to H_*(X^2,C_*(\Omega Y))$$ of degree $-n$ defined by the composition 

$$\begin{array}{rcl}
    H_i(X^2,C_*(\Omega Y)) \otimes H_j(X^2, C_*(\Omega Y)) & \overset{(-1)^{nj}K}{\longrightarrow} &  H_{i+j}(X^4, C_*(\Omega Y^2)) \\
     &\overset{D_!}{\longrightarrow} & H_{i+j-n}(X^3,D^*C_*(\Omega Y^2)) \\
     & \overset{\Tilde{m}}{\longrightarrow} &  H_{i+j-n}(X^3, p^*C_*(\Omega Y))\\
     &  \overset{p_{*}}{\longrightarrow}  &H_{i+j-n}(X^2, C_*(\Omega Y))
\end{array}$$

is associative, admits a neutral element and corresponds, via the Fibration Theorem to the product \\ $\Lambda : H_i(\mathcal{P}_{X,f} Y) \otimes H_j(\mathcal{P}_{X,f} Y) \to H_{i+j-n}(\mathcal{P}_{X,f} Y)$ defined by Stegemeyer in \cite{Max23}.
\end{theorem}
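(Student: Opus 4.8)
The plan is to establish the last assertion first — that $\textup{PP}_{DG}$ corresponds under the Fibration Theorem to Stegemeyer's $\Lambda$ — by recognizing each of the four maps in the defining composition as the Morse counterpart of a topological operation, and then to deduce associativity and the existence of a neutral element, either by transport of structure along this correspondence or by a direct argument mirroring the associativity proof for $\CS$ in \cite{Rie24}. Throughout one works with generic Morse--Smale data on $X^2$, $X^3$, $X^4$ (product data on the product factors) and with transitive lifting functions compatible with the relevant pullbacks; the general theory guarantees that all the maps involved are well defined up to chain homotopy, independently of these choices.

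For the correspondence I would proceed step by step. The Fibration Theorem \cite[Theorem 7.2.1]{BDHO23} identifies each intermediate group with the homology of the relevant path-space fibration: $H_*(X^2,\COY)\cong H_*(\mathcal{P}_{X,f}Y)$, $H_*(X^4,C_*(\Omega Y^2))\cong H_*(\mathcal{P}_{X,f}Y\times\mathcal{P}_{X,f}Y)$ via the product fibration over $X^4=X^2\times X^2$, $H_*(X^3,D^*C_*(\Omega Y^2))\cong H_*\!\big(\mathcal{P}_{X,f}Y\ftimes{\ev_1}{\ev_0}\mathcal{P}_{X,f}Y\big)$ since $D(a,b,c)=(a,b,b,c)$ cuts out exactly the concatenable pairs, and $H_*(X^3,p^*C_*(\Omega Y))\cong H_*(p^*\mathcal{P}_{X,f}Y)$. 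Then: (i) the Morse cross product $K$ is intertwined with the homological exterior product, which follows from the multiplicativity of $\Psi$ under products (the quasi-isomorphism of a product fibration is, up to chain homotopy, the tensor of the factor quasi-isomorphisms post-composed with an Eilenberg--Zilber map; cf. the analogous statements in \cite{Rie24,BDHO23}); (ii) the umkehr $D_!$ is intertwined with the topological Gysin map of the pullback square along the codimension-$n$ embedding $D$, which is the Gysin-compatibility already used to build $\CS$ in \cite{Rie24}; (iii) by Theorem \ref{theorem C'} applied to the concatenation $m$ viewed as a morphism of fibrations relative to $p:X^3\to X^2$, the composite $p_*\circ\tilde m$ is intertwined with $m_*$. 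Assembling (i)--(iii) and comparing with Stegemeyer's definition of $\Lambda$ as ``exterior product, intersect on the base along $D$, concatenate'' shows that $\textup{PP}_{DG}$ and $\Lambda$ agree under $\Psi$, the prefactor $(-1)^{nj}$ being the usual Koszul bookkeeping coming from moving the umkehr class past a degree-$j$ class.

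For associativity I would show that both bracketings $\textup{PP}_{DG}(\textup{PP}_{DG}(-,-),-)$ and $\textup{PP}_{DG}(-,\textup{PP}_{DG}(-,-))$ factor through a common ``triple path-product'' $H_*(X^2,\COY)^{\otimes 3}\to H_{*-2n}(X^2,\COY)$, obtained from a six-fold exterior product landing in $H_*(X^6,C_*(\Omega Y^3))$, the umkehr along $\delta:X^4\to X^6$, $\delta(a,b,c,d)=(a,b,b,c,c,d)$, the double concatenation $m\circ(m\times\mathrm{id})$, and the projection to the outer coordinates. That each iterated product equals this common operation uses the homotopy-associativity of Moore-path concatenation, the fact that the umkehr along $\delta$ can be computed as either iterated umkehr (a general property of Gysin maps for transverse intersections, here the two partial diagonals in $X^6$ meeting transversally), and the compatibility of exterior products with umkehr maps and with the projections $p$ — each the analogue of an ingredient already present in \cite{Rie24}. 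Alternatively, associativity is immediate from the correspondence with $\Lambda$. For the neutral element, the unit is the class of constant paths, i.e. the image of $[X]\in H_n(X)$ under the section $X\to\mathcal{P}_{X,f}Y$ sending $x$ to the constant path at $f(x)$ (a diagonal-type class in $H_n(X^2,\COY)$ on the Morse side); the unit property then reduces, after performing the umkehr along $D$ and concatenating, to the fact that prepending or appending a constant path at an endpoint is homotopic to the identity — again either checked directly via the compatibilities above or transported from \cite{Max23}.

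The main obstacle is the chain-level assembly in the correspondence: Theorem \ref{theorem C'} and the Gysin-compatibility statement each supply only commutativity \emph{up to chain homotopy}, and the homotopies depend on auxiliary choices (Morse--Smale data on the various $X^k$, perturbation data for the moduli spaces, lifting functions for the pullback fibrations), so one must either choose them coherently or, more economically, pass to homology after each step and only use that the relevant squares commute there. Closely related is the sign bookkeeping: the explicit $(-1)^{nj}$, the Koszul signs hidden in the Eilenberg--Zilber maps defining $K$, and the signs attached to the degree-$(-n)$ umkehr $D_!$ must all be matched against the conventions of \cite{Max23}; this is routine but unavoidable.
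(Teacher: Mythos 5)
Your outline of the correspondence with $\Lambda$ matches the paper's: identify each of the four maps with its topological counterpart ($K$ with the exterior product, $D_!$ with capping against a Thom class of $D$, and $p_*\circ\tilde m$ with $m_*$ via Theorem~\ref{cor : morphism of fibrations relative to ...}). Be aware, though, that the middle identification is where the paper spends most of its effort: one must show that the Thom class $\tau_D^M$ entering the Gysin compatibility of \cite[Theorem 3.2]{Rie24} (normalized by the ``Morse'' convention $(\Or \ D(X^3), \Or \ N_D) = \Or \ X^4$) pulls back under $\ev\times\ev$ to Stegemeyer's class $(\ev_1\times\ev_0)^*\tau_\Delta$, normalized by $\tau_\Delta\cap[X^2]=\Delta_*[X]$; this reduces to an explicit cap-product computation giving $[\tau_D]\cap[X^4]=(-1)^n D_*[X^3]$, and it is precisely this comparison that makes the prefactor $(-1)^{nj}$ come out right. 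Calling this ``routine bookkeeping'' undersells it, but you have located the correct issue.

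The genuine gap is in your direct associativity argument. You assert that the compatibility of the umkehr maps with the projections $p$ is ``the analogue of an ingredient already present in \cite{Rie24}.'' It is not: the Chas--Sullivan construction there never pushes forward along a non-injective map, so nothing in \cite{Rie24} tells you how $D_!$ (shriek map of a codimension-$n$ embedding) commutes with $(p\times 1)_*$ (direct map of a projection). This is exactly the content of the two nontrivial squares in the paper's associativity diagram, and it requires a new lemma (Lemma~\ref{lemme : commutativity shriek and direct}): for a square $\varphi\circ i_X=i_Y\circ\psi$ with $\varphi^{-1}(i_Y(Y))=i_X(X)$ and isomorphic normal bundles, the compositions $\psi_*\circ i_{X,!}$ and $i_{Y,!}\circ\varphi_*$ are chain homotopic up to the sign $\varepsilon$ comparing the two normal orientations; the same lemma is needed for the unit, where one commutes $D_!$ past $(1\times\Delta)_*$. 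Your fallback --- deducing associativity and the unit by transport of structure from $\Lambda$ once the correspondence is established --- is logically sound and more economical than the paper's intrinsic Morse-theoretic proofs, but it only goes through after the correspondence (including the Thom-class comparison above) is fully proved; as written, neither of your two routes supplies the missing shriek/direct commutation.
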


\subsection{Homotopy invariance and module structure over the Chas-Sullivan ring}

We can reprove several results from \cite{Max23} in this setting. In particular, if $Y^k$ is a pointed, oriented, closed, and connected manifold, we prove that $H_*(X^2, C_*(\Omega Y))$ has a module structure over  $H_*(Y, C_*(\Omega Y))$ endowed with the ring structure \hyperref[CSDG]{$\CS$}.

\begin{proposition}[Proposition \ref{prop : module structure over Chas-Sullivan product}]\label{proposition : module structure over Chas-Sullivan product}
    There exists a pairing $$\cdot_M : H_i(Y, \COY) \otimes H_j(X^2, \COY) \to H_{i+j-k}(X^2, \COY)$$ that induces a left $H_*(Y,\COY)$-module structure  on $H_*(X^2, \COY)$ with the additional property that if $\gamma \in H_*(Y, \COY)$ and $\tau, \tau' \in H_*(X^2, \COY)$, then 

    $$\gamma \cdot_M \PP(\tau \otimes \tau') = \PP((\gamma \cdot_M \tau) \otimes \tau').$$

\end{proposition}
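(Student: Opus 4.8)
The strategy is to mirror the construction of $\PP$ itself: build $\cdot_M$ as a composition of a Künneth-type cross map, an umkehr (shriek) map along an appropriate embedding of $X$-powers, a chain map induced by a concatenation $m'$ that is a morphism of fibrations relative to some projection, and a pushforward. Concretely, to multiply a class $\gamma \in H_i(Y,\COY)$ (living on the loop-loop fibration over $Y$) with $\tau \in H_j(X^2,\COY)$, I want to "insert the loop at the first endpoint": this calls for intersecting along the map $X \times X^2 \to Y \times X^2 \to Y^2 \times X = $ (appropriate target) so that the base point $\ev_0$ of a path in $\mathcal{P}_{X,f}Y$ is glued to the loop in $Y$. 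More precisely, the relevant geometric operation on spaces is $\mathcal{L}Y \ftimes{\ev}{f\circ\ev_0} \mathcal{P}_{X,f}Y \to \mathcal{P}_{X,f}Y$, concatenating a loop based at $f(x)$ with a path starting at $x$; this is a morphism of fibrations over $X^2$ relative to $\mathrm{id}_{X^2}$ once we view the domain as fibered over $X \times X^2$ and intersect along the graph of $f$ composed with first projection. I would set up the composite

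\[
\begin{array}{rcl}
H_i(Y,\COY) \otimes H_j(X^2,\COY) & \overset{K}{\longrightarrow} & H_{i+j}(Y \times X^2, C_*(\Omega Y^2)) \\
 & \overset{\Gamma_!}{\longrightarrow} & H_{i+j-k}(X \times X^2, \Gamma^*C_*(\Omega Y^2)) \\
 & \overset{\widetilde{m'}}{\longrightarrow} & H_{i+j-k}(X \times X^2, q^*C_*(\Omega Y)) \\
 & \overset{q_*}{\longrightarrow} & H_{i+j-k}(X^2,\COY),
\end{array}
\]

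where $\Gamma : X \times X^2 \to Y \times X^2$ is $\mathrm{id}\times(f\times f)$ precomposed with the graph of $f$ on the first factor (so the shriek map has degree $-k$, matching the stated degree shift), $m'$ is the loop-insertion concatenation, and $q$ is the projection forgetting the first $X$-factor.

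The second step is to verify the three asserted properties. For the \emph{correspondence} claim (that $\cdot_M$ realizes a genuine $H_*(Y,\COY)$-action on $H_*(X^2,\COY)\cong H_*(\mathcal{P}_{X,f}Y)$), I would invoke Theorem~\ref{theorem C'} (the relative morphism of fibrations theorem) to transport $\widetilde{m'}$ through the quasi-isomorphisms $\Psi$ of the Fibration Theorem, reducing the identity to a statement in singular homology about the Gruher–Salvatore-type pairing; this should match Stegemeyer's module structure in \cite{Max23} essentially by construction, since the geometric chains involved are the same. For the \emph{module axioms} (associativity of the action and unitality), I would again push everything to the topological level, where associativity $(\alpha \cdot_{\CS}\beta)\cdot_M\tau = \alpha\cdot_M(\beta\cdot_M\tau)$ follows from associativity of concatenation of loops-and-paths up to reparametrization, exactly as in the proof of associativity of $\PP$ in Theorem~\ref{theorem A PPDG}; the Chas–Sullivan ring structure $\CS$ on $H_*(Y,\COY)$ is already known (from \cite[Theorem 7.1]{Rie24}) to correspond to the classical one, so compatibility is inherited.

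The genuinely new identity is the compatibility $\gamma\cdot_M\PP(\tau\otimes\tau') = \PP((\gamma\cdot_M\tau)\otimes\tau')$. My plan is to prove this by a "commuting intersections" argument: both sides are, up to sign and reparametrization, obtained from the same configuration of four paths-and-loops in $Y$ by performing intersections along $D : X^3 \to X^4$ (the Path-product gluing) and along the loop-insertion locus, and these two umkehr maps commute because the corresponding submanifolds of $X^{\text{several}}$ are transverse and their intersection is the same regardless of order (a standard fact about iterated shriek maps, cf. the associativity proofs for string topology operations). Concretely I would exhibit a single moduli space / fiber-product space $X^{?} \times_{Y^{?}} (\text{paths})$ mapping to both sides and check that the two composites of (cross, shriek, concatenation, pushforward) agree with it up to the asserted sign; keeping track of the Koszul signs coming from the degree-$n$ and degree-$k$ shifts is the bookkeeping part. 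The main obstacle I anticipate is precisely this sign-and-order verification: ensuring that the relative-morphism chain maps $\widetilde{m'}$ and $\widetilde m$ (from the $\PP$ construction) can be chosen compatibly so that the homotopies supplied by Theorem~\ref{theorem C'} on each side can be spliced into a single homotopy witnessing the identity, rather than merely matching the two sides after passing to homology. I would handle this by working at chain level with fixed geometric representatives throughout, deferring to \cite[Proposition 9.8.1]{BDHO23} and the functoriality statements in \cite{Rie24} for the requisite naturality of these homotopies.
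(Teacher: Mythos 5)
Your overall architecture (K\"unneth map, shriek map along the locus where the loop's basepoint meets $f$ of the path's startpoint, then concatenate) is the right one, but the composite you write down does not type-check, and this matters. The intersection locus of concatenable pairs inside $Y\times X^2$ is the graph-type embedding $I(x,x')=(f(x),x,x')$, i.e.\ a copy of $X^2$ itself of codimension $k$; the paper's pairing is the three-step composite $\tilde m\circ I_!\circ((-1)^{kj}K)$ with no auxiliary factor and no projection, and the concatenation $\mathcal{L}Y\ftimes{\ev}{f\circ\ev_0}\mathcal{P}_{X,f}Y\to\mathcal{P}_{X,f}Y$ is then an honest morphism of fibrations over $X^2$ (no relative morphism is needed here). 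Your intermediate base $X\times X^2$ mapping to $Y\times X^2$ has codimension $k-n$, not $k$, so $\Gamma_!$ followed by $q_*$ drops degree by $k-n$ and the target degrees do not match; the extra $X$-factor and the forgetful $q_*$ have no analogue of the role $p_*$ plays in $\PP$ (there the two concatenated paths have a genuinely forgotten middle endpoint; here the loop contributes no new point of $X$).

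Two further gaps. First, for the module axiom you propose to transport everything through the Fibration Theorem and verify the identity in singular homology against Stegemeyer's action; but that requires first proving that $\cdot_M$ corresponds to the topological pairing under $\Psi$, which you do not establish and which the paper avoids entirely: it proves $\CS(\gamma\otimes\gamma')\cdot_M\tau=\gamma\cdot_M(\gamma'\cdot_M\tau)$ directly by a commutative diagram of enriched Morse complexes, the only nontrivial squares being the K\"unneth/shriek compatibilities of \cite[Lemma 6.17]{Rie24}. Second, for the identity $\gamma\cdot_M\PP(\tau\otimes\tau')=\PP((\gamma\cdot_M\tau)\otimes\tau')$, the crucial square is \emph{not} an interchange of two umkehr maps along transverse submanifolds: it interchanges the shriek map $(I\times\Id)_!$ with the \emph{direct} map $(\Id\times p)_*$ coming from the forgetful projection in $\PP$. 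This mixed shriek/pushforward commutation is precisely what Lemma \ref{lemme : commutativity shriek and direct} provides, after checking $(\Id\times p)\circ(I\times\Id)=I\circ p$, the preimage condition $(\Id\times p)^{-1}(I(X^2))=(I\times\Id)(X^3)$, and comparing the orientations of $N_I$ and $N_{I\times\Id}$, which is where the sign $(-1)^{kn}$ comes from. Your plan does not identify this lemma or the preimage/orientation hypotheses it requires, and a generic "transverse intersections commute" argument will not produce the sign.
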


We also prove that the ring and module structures only depend on the homotopy type of $f : X \to Y$ 

\begin{proposition}[Propositions \ref{prop : ring iso homotopy type of f} and \ref{prop : module iso homotopy type of f}]\label{proposition : ring iso homotopy type of f}
     If $g : X \to Y$ is homotopic to $f$, then there exists a chain homotopy equivalence $\tilde{\Psi}_{f,g} : C_*(X, \COY ;f) \to C_*(X, \COY ;g)$ that induces an isomorphism $$\tilde{\Psi}_{f,g} : H_*(X, \COY ; f) \overset{\sim}{\to} H_*(X, \COY ; g)$$  of rings and modules.
\end{proposition}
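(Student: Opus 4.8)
The plan is to construct $\tilde{\Psi}_{f,g}$ from a fiber homotopy equivalence and then to verify, one operation at a time, that it intertwines the structures. Fix a homotopy $H\colon X\times[0,1]\to Y$ from $f$ to $g$, and for $x\in X$ let $H_x$ be the Moore path $t\mapsto H(x,t)$ from $f(x)$ to $g(x)$ and $\overline{H_x}$ its reverse. Prepending and appending these paths defines
\[
\Phi_{f,g}\colon \mathcal{P}_{X,f} Y\longrightarrow \mathcal{P}_{X,g} Y,\qquad \gamma\longmapsto \overline{H_a}\cdot\gamma\cdot H_c,
\]
for $\gamma$ a path with $\gamma(0)=f(a)$ and terminal point $f(c)$. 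This covers $\mathrm{id}_{X^2}$, so it is a morphism of fibrations relative to $\mathrm{id}_{X^2}$; the same recipe for $f\times f$ and $g\times g$ gives $\Phi_{f,g}\times\Phi_{f,g}\colon \mathcal{P}_{X^2,f\times f}Y^2\to \mathcal{P}_{X^2,g\times g}Y^2$ over $\mathrm{id}_{X^4}$. Since pullbacks of a fibration along homotopic maps are fiber homotopy equivalent, $\Phi_{f,g}$ is a fiber homotopy equivalence, so $(\Phi_{f,g})_*$ is an isomorphism. Applying Theorem~\ref{theorem C'} with $Z_0=Z_1=X^2$ and relative map $\mathrm{id}$ yields a chain map $\tilde{\Psi}_{f,g}:=\widetilde{\Phi_{f,g}}\colon C_*(X^2,\COY;f)\to C_*(X^2,\COY;g)$ fitting into a square with the Fibration Theorem quasi-isomorphisms $\Psi_f,\Psi_g$ that commutes up to chain homotopy. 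As $(\Phi_{f,g})_*$ is an isomorphism, $\tilde{\Psi}_{f,g}$ is a quasi-isomorphism, and since both sides are bounded-below complexes of free abelian groups it is a chain homotopy equivalence.

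For the ring statement I would show that $\tilde{\Psi}_{f,g}$ intertwines, up to chain homotopy, the four maps $K,\ D_!,\ \tilde{m},\ p_*$ of Theorem~\ref{theorem A PPDG} computed for $f$ and for $g$. The external product $K$ and the umkehr map $D_!$ depend only on the manifolds $X^2,X^4,X^3$, on $D$, and on the coefficient fibrations up to pullback, and $\Phi_{f,g}\times\Phi_{f,g}$ is compatible with pulling back along $D\colon X^3\to X^4$; naturality for these two maps then follows from Theorem~\ref{theorem C'}. For $\tilde{m}$ one uses that the concatenations $m_f,m_g$ are intertwined by the fiber homotopy equivalences: for concatenable paths $\gamma_1$ (from $f(a)$ to $f(b)$) and $\gamma_2$ (from $f(b)$ to $f(c)$), one has $\Phi_{f,g}(m_f(\gamma_1,\gamma_2))=\overline{H_a}\cdot\gamma_1\cdot\gamma_2\cdot H_c$ while $m_g\big((\Phi_{f,g}\times\Phi_{f,g})(\gamma_1,\gamma_2)\big)=\overline{H_a}\cdot\gamma_1\cdot H_b\cdot\overline{H_b}\cdot\gamma_2\cdot H_c$, and the obvious homotopy cancelling $H_b\cdot\overline{H_b}$ is fiberwise over $p\colon X^3\to X^2$; feeding this relative homotopy into Theorem~\ref{theorem C'} gives the commuting square for $\tilde{m}$. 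Finally $p_*$ is manifestly natural since $\Phi_{f,g}$ already lies over $X^2$. Composing, $\tilde{\Psi}_{f,g}\circ \PP^{f}$ is chain homotopic to $\PP^{g}\circ(\tilde{\Psi}_{f,g}\otimes\tilde{\Psi}_{f,g})$, so $\tilde{\Psi}_{f,g}$ is multiplicative in homology; the unit is preserved because $\Phi_{f,g}$ sends the unit section $(x,x)\mapsto c_{f(x)}$ to $\overline{H_x}\cdot c_{f(x)}\cdot H_x$, homotopic rel endpoints to $c_{g(x)}$. (Equivalently, via Theorem~\ref{theorem A PPDG} this reduces to the fact that $(\Phi_{f,g})_*$ is a ring isomorphism for Stegemeyer's product $\Lambda$, which is the homotopy invariance statement of \cite{Max23}.)

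The module statement is handled the same way. The action $\cdot_M$ of Proposition~\ref{proposition : module structure over Chas-Sullivan product} is assembled from an external product, an umkehr map, and a concatenation of a loop in $Y$ with a path in $\mathcal{P}_{X,f} Y$, and its first argument lies in $H_*(Y,\COY)$, which does not involve $f$ at all. Combining $\Phi_{f,g}$ with the identity on the $\Omega Y$-factor intertwines each of these maps up to a relative homotopy of the type above, so Theorem~\ref{theorem C'} gives $\tilde{\Psi}_{f,g}(\gamma\cdot_M\tau)=\gamma\cdot_M\tilde{\Psi}_{f,g}(\tau)$, i.e. $\tilde{\Psi}_{f,g}$ is a morphism of $H_*(Y,\COY)$-modules; together with Proposition~\ref{proposition : module structure over Chas-Sullivan product} this also transports the mixed identity $\gamma\cdot_M\PP(\tau\otimes\tau')=\PP((\gamma\cdot_M\tau)\otimes\tau')$.

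The content of the proof is not conceptual but lies in making the comparison homotopies compatible with the fibration structures in the precise sense Theorem~\ref{theorem C'} requires: at each stage one must check that the relevant homotopy (the cancellation of $H_b\cdot\overline{H_b}$, the Moore reparametrisations, the straightening of the unit section) can be chosen fiberwise over the appropriate base map, and one must track the degree shift $-n$ and the signs through the umkehr maps. The delicate bookkeeping of the two a priori different $C_*(\Omega X^2)$-module structures that $f$ and $g$ induce on $C_*(\Omega Y)$ is exactly what Theorem~\ref{theorem C'}, through \cite[Proposition 9.8.1]{BDHO23}, is designed to absorb, so once the geometric homotopies are in place the remaining verifications are routine.
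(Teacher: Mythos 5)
Your construction of $\tilde{\Psi}_{f,g}$ and your treatment of the ring structure match the paper's proof in all essentials: the same conjugation map $(x_0,x_1,\gamma)\mapsto \overline{H_{x_0}}\cdot\gamma\cdot H_{x_1}$ over $\mathrm{id}_{X^2}$, the same diagram through $K$, $D_!$, $\tilde m$, $p_*$, and the same key geometric input, namely that $m_g\circ(\Phi_{f,g}\times\Phi_{f,g})$ and $\Phi_{f,g}\circ m_f$ differ only by the cancellation of $H_b\cdot\overline{H_b}$, hence are homotopic as morphisms of fibrations. One stylistic divergence: you obtain the chain homotopy equivalence from ``quasi-isomorphism between bounded-below free complexes,'' whereas the paper constructs an explicit chain homotopy inverse $\tilde{\Psi}_{g,f}$ from the reversed homotopy, after first proving that the identification morphism is functorial under concatenation of homotopies and depends only on the homotopy class of the homotopy. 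Both are valid; the paper's version buys the explicit inverse and the auxiliary properties it needs later.

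The module part of your argument has a genuine gap. You write that the first argument of $\cdot_M$ lies in $H_*(Y,\COY)$ and ``does not involve $f$ at all,'' but the pairing $\cdot_M$ is built from the umkehr map of the embedding $I_f:X^2\to Y\times X^2$, $I_f(a,b)=(f(a),a,b)$, which visibly depends on $f$: the module structure for $g$ uses $I_{g,!}$, not $I_{f,!}$, and these are shriek maps along \emph{different} embeddings. Comparing them is not a naturality statement for a fixed morphism of fibrations, so it is not absorbed by Theorem~\ref{theorem C'}; it requires the separate fact that a homotopy $H$ between $I_f$ and $I_g$ induces an identification morphism $\tilde{\Psi}^H$ with $\tilde{\Psi}^H\circ I_{f,!}$ chain homotopic to $I_{g,!}$ (item~4 of Proposition~\ref{prop : Identification morphism for fibrations}, proved via an explicit interpolation $\kappa$ through the truncated lifts $\eta(x)\lvert_{[s,1]}$). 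This is precisely the extra triangle in the paper's diagram for Proposition~\ref{prop : module iso homotopy type of f}, and your phrase ``a relative homotopy of the type above'' does not supply it. To close the gap you would need to state and prove this compatibility of shriek maps with homotopies of the underlying map; the remainder of your module argument (the fiberwise homotopy between $m\circ\Psi^H\circ(1\times\Phi_{f,g})$ and $\Phi_{f,g}\circ m$) then goes through as you describe.
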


and independent of the  homotopy equivalence class of the pair $(Y,X)$ when $X \subset Y$.

\begin{proposition}[Propositions \ref{prop : ring iso homotopy equivalence} and \ref{prop : module iso homotopy equivalence}]\label{proposition : ring and module homotopy equivalence}
    If $\varphi : (Y_1, X_1) \to (Y_2,X_2)$ is a homotopy equivalence of pairs, then there exists an isomorphism of rings $$\varphi_{\#} : H_*((X_2)^2, C_*(\Omega Y_2)) \to H_*((X_1)^2, C_*(\Omega Y_1))$$ that satisfies the equation
    $$\forall \gamma \in H_*(Y_2, C_*(\Omega Y_2)), \forall \tau \in H_*((X_2)^2, C_*(\Omega Y_2)), \ \varphi_{\#}(\gamma \cdot_M \tau) = \varphi^{\mathcal{L}}_{\#}(\gamma) \cdot_M \varphi_{\#}(\tau),$$ 
    where $\varphi^{\mathcal{L}}_{\#} = \widetilde{\mathcal{L}\varphi}^{-1} \circ \varphi_! : H_*(Y_2, C_*(\Omega Y_2)) \to H_*(Y_1, C_*(\Omega Y_1))$ is the ring isomorphism induced by the homotopy equivalence $\varphi : Y_1 \to Y_2$ (see \cite[Proposition 7.13]{Rie24}).
\end{proposition}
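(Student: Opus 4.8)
The plan is to reduce everything to machinery already in place: the Fibration Theorem \cite[Theorem 7.2.1]{BDHO23}, the identification of $\PP$ with Stegemeyer's product $\Lambda$ (Theorem \ref{theorem A PPDG}), the identification of $\cdot_M$ with the topological action of the Chas--Sullivan ring (Proposition \ref{proposition : module structure over Chas-Sullivan product}), and the Chas--Sullivan functoriality isomorphism of \cite[Proposition 7.13]{Rie24}. Write $i_j:X_j\hookrightarrow Y_j$ for the inclusions ($Y_j$ closed oriented manifolds, $X_j$ closed oriented submanifolds). A homotopy equivalence of pairs restricts to homotopy equivalences $\varphi_Y:Y_1\to Y_2$ and $\varphi_X:=\varphi|_{X_1}:X_1\to X_2$ satisfying $i_2\circ\varphi_X=\varphi_Y\circ i_1$, and post-composing Moore paths with $\varphi_Y$ defines a map $\mathcal{P}\varphi:\mathcal{P}_{X_1,i_1}Y_1\to\mathcal{P}_{X_2,i_2}Y_2$ of the pullback path fibrations covering $\varphi_X\times\varphi_X:X_1^2\to X_2^2$; it induces $\Omega\varphi_Y$ on fibers, hence is a homotopy equivalence of total spaces. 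I would then define $\varphi_{\#}:=(\Psi_1)_*\circ(\mathcal{P}\varphi)_*^{-1}\circ(\Psi_2)_*^{-1}$, where $\Psi_j$ are the Fibration Theorem isomorphisms; equivalently, as in \cite[Proposition 7.13]{Rie24}, $\varphi_{\#}$ is the composite of the degree-$0$ wrong-way map of $\varphi_X\times\varphi_X$ with the change-of-coefficients isomorphism induced by the DGA quasi-isomorphism $C_*(\Omega Y_1)\to C_*(\Omega Y_2)$ (the two descriptions agreeing by the naturality of $\Psi$, i.e. by Theorem \ref{theorem C'}). That $\varphi_{\#}$ is an isomorphism is then immediate.

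To prove $\varphi_{\#}$ is a ring homomorphism I would transport the question to the topological side. By Theorem \ref{theorem A PPDG} the $\Psi_j$ intertwine $\PP$ with Stegemeyer's products $\Lambda_1,\Lambda_2$, so it suffices to show $(\mathcal{P}\varphi)_*\circ\Lambda_1=\Lambda_2\circ\big((\mathcal{P}\varphi)_*\otimes(\mathcal{P}\varphi)_*\big)$. This is the homotopy invariance of $\Lambda$ under homotopy equivalences of pairs; one can either cite \cite{Max23} or argue directly from the definition of $\Lambda$ as ``intersect on the base, then concatenate'': $\mathcal{P}\varphi$ commutes with the evaluation maps ($\ev_t\circ\mathcal{P}\varphi=\varphi_X\circ\ev_t$) and with concatenation ($\mathcal{P}\varphi(\gamma*\delta)=\mathcal{P}\varphi(\gamma)*\mathcal{P}\varphi(\delta)$), while the intersection/shriek maps on the base powers $X_j^k$ fit into homotopy-pullback squares with the maps $\varphi_X^{\times k}$ --- for instance $D\circ\varphi_X^{\times 3}=\varphi_X^{\times 4}\circ D$ is a homotopy pullback square because $\varphi_X$ is a homotopy equivalence --- and shriek maps are natural in such squares. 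The same bookkeeping can be done entirely Morse-theoretically, checking the four ingredients $K$, $D_!$, $\tilde{m}$, $p_*$ of $\PP$ one at a time; the two approaches meet at the same homotopy-pullback verifications. The neutral element is handled by tracking the constant-path section of $\mathcal{P}_XY\to X^2$ through $\mathcal{P}\varphi$.

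For the module statement, Proposition \ref{proposition : module structure over Chas-Sullivan product} with Theorem \ref{theorem A PPDG} shows that $\Psi_j$ carries $\cdot_M$ to the topological action of $H_*(\mathcal{L}Y_j)$ on $H_*(\mathcal{P}_{X_j}Y_j)$. Because $i_2\circ\varphi_X=\varphi_Y\circ i_1$, the map $\mathcal{P}\varphi$ is equivariant over $\mathcal{L}\varphi_Y:\mathcal{L}Y_1\to\mathcal{L}Y_2$ for the two action maps --- these are again defined by intersection on the base and concatenation, hence natural by the argument above --- and by \cite[Proposition 7.13]{Rie24} the isomorphism corresponding under the Fibration Theorem to $(\mathcal{L}\varphi_Y)_*^{-1}$ is exactly $\varphi^{\mathcal{L}}_{\#}=\widetilde{\mathcal{L}\varphi}^{-1}\circ\varphi_!$, which is a ring isomorphism. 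Transporting the equivariance relation through $\Psi_1,\Psi_2$ then yields $\varphi_{\#}(\gamma\cdot_M\tau)=\varphi^{\mathcal{L}}_{\#}(\gamma)\cdot_M\varphi_{\#}(\tau)$.

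I expect the main obstacle to be the coherence bookkeeping rather than any new idea. One must verify with care that $\varphi_{\#}$ as defined corresponds under the Fibration Theorem to $(\mathcal{P}\varphi)_*^{-1}$ --- this is precisely the role of Theorem \ref{theorem C'} --- and that each piece of the definitions of $\PP$ and $\cdot_M$ (the exterior product $K$, the shriek maps, the concatenation maps $\tilde{m}$, the pushforwards) commutes up to chain homotopy with the wrong-way map of $\varphi_X$ and with the change of loop-space coefficients. The wrong-way maps of the homotopy equivalences $\varphi_X,\varphi_Y$ are themselves harmless, being inverses of induced maps; the delicate point is that they slot correctly into the squares built from the genuine, positive-codimension shriek maps such as $D_!$, which comes down to the homotopy-pullback verifications above. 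Finally there is routine sign and orientation bookkeeping, since a homotopy equivalence of closed oriented manifolds is only of degree $\pm1$ a priori.
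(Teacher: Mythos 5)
Your construction of $\varphi_{\#}$ agrees with the paper's up to one point discussed below, but your verification runs along a genuinely different route. The paper never passes to the topological side: it proves both the ring and the module compatibility by a single large diagram chase entirely in the enriched Morse complexes, with the left half of each diagram consisting of the codimension-zero shriek maps $(\varphi^k)_! : H_*(X_2^k,\cdot) \to H_*(X_1^k, \varphi^{k,*}(\cdot))$ and the right half of the maps $\widetilde{\mathcal{P}\varphi\times\cdots}$ induced by morphisms of fibrations. The squares involving two shriek maps (e.g.\ $D_{2,!}$ against $D_{1,!}$) commute by mere functoriality of shriek maps applied to $D_2\circ\varphi^3=\varphi^4\circ D_1$ -- no homotopy-pullback naturality is needed -- and the only delicate square is the one mixing $(\varphi^k)_!$ with the direct maps $p_{k,*}$, which is handled by the identity $\phi_!=\deg(\phi)\cdot\phi_*^{-1}$ from \cite[Proposition 10.6.1]{BDHO23}. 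Your route instead transports everything through the Fibration Theorem and appeals to naturality of Stegemeyer's $\Lambda$ and of the $H_*(\mathcal{L}Y)$-action under $\mathcal{P}\varphi$; this is viable (and arguably more conceptual), but it makes the result depend on Theorem~\ref{theorem C'} plus the full strength of Theorem~\ref{theorem A PPDG} and Proposition~\ref{proposition : module structure over Chas-Sullivan product}, and on the homotopy invariance of $\Lambda$ from \cite{Max23}, whereas the paper's argument is self-contained on the Morse side (and, for the ring statement alone, does not need $Y_i$ to be manifolds).

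One point in your write-up is more than ``routine sign bookkeeping'': the degree $d=\deg(\varphi|_{X_1})\in\{-1,1\}$ must be built into the \emph{definition} of $\varphi_{\#}$, not merely tracked through the computation. Indeed $(\varphi_X\times\varphi_X)^*\tau_{\Delta_2}=d\cdot\tau_{\Delta_1}$, so your identity $(\mathcal{P}\varphi)_*\circ\Lambda_1=\Lambda_2\circ\bigl((\mathcal{P}\varphi)_*\otimes(\mathcal{P}\varphi)_*\bigr)$ only holds up to the factor $d$, and when $d=-1$ the map $(\mathcal{P}\varphi)_*^{-1}$ (equivalently your $\varphi_{\#}$ without correction) is not multiplicative -- a ring map rescaled by $-1$ is no longer a ring map. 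The paper's definition is $\varphi_{\#}=d\cdot(\widetilde{\mathcal{P}\varphi})^{-1}\circ(\varphi^2)_!$, the factor $d$ arising precisely in the square $\blue{(*)}$ from $(\varphi^3)_!=d\,(\varphi^3)_*^{-1}$ versus $(\varphi^2)_!=(\varphi^2)_*^{-1}$. With that correction inserted, your argument goes through.
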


\textbf{Acknowledgements.} I would like to thank my PhD advisors Mihai Damian and Alexandru Oancea for their many advice and encouragements. I would also like to thank Maximilian Stegemeyer for a lot of very useful discussions.

\section{Definition}\label{section : PPDG Definition}

Let $Y$ be a topological space, $(X,\star)$ be a pointed, oriented, closed, and connected manifold  and $f : X \to Y$ be a continuous map. Denote $\star_Y = f(\star_X).$ Let
$$\mathcal{P} Y = \left\{ (\gamma,a), \ a \in [0,+\infty), \ \gamma : [0,a] \to Y \textup{ continuous} \right\}$$
be the Moore space of paths in $Y$. For $(\gamma,a) \in \mathcal{P}Y,$ we will denote $\gamma \in \mathcal{P}Y$ when the mention of the non-negative real number $a$ is not useful.  This space is naturally endowed with a fibration $$\Omega Y \hookrightarrow \mathcal{P} Y \overset{\ev_0,\ev_1}{\to} Y^2$$ given by the evaluation at the basepoint and endpoint. Consider its pullback by $f\times f : X^2 \to Y^2$,

$$\Omega Y \hookrightarrow \mathcal{P}_{X,f} Y \overset{\ev}{\to} X^2,$$

where

 $$\mathcal{P}_{X,f} Y = \{(x_0,x_1,(\alpha,a)) \in X^2 \times \mathcal{P}Y, \ \alpha(0) = f(x_0) , \alpha(a)= f(x_1)\}$$ and the evaluation is given by $$\ev(x_0,x_1,\alpha) := (\ev_0(x_0,x_1,\alpha), \ev_1(x_0,x_1,\alpha)) := (x_0,x_1).$$

We will often see an element $(x_0,x_1,\gamma) \in\mathcal{P}_{X,f} Y$ as $\gamma \in \mathcal{P} Y$ in the fiber $\ev^{-1}(x_0,x_1)$ and therefore write $\gamma \in \mathcal{P}_{X,f} Y$ when it is not necessary to precise the fiber in which $\gamma$ belong. We will consider the following \textbf{transitive lifting function} $$ \Phi_{\ev} : \mathcal{P}_{X,f} Y \ftimes{\ev}{\ev_0} \mathcal{P} X^2 \to \mathcal{P}_{X,f} Y,\  \Phi_{\ev}(\gamma, (\tau_1,\tau_2)) = (f\circ\tau_1^{-1}) \gamma (f \circ \tau_2)$$ for this fibration.
A degree $-n$ product $$\Lambda : H_*(\mathcal{P}_{X,f}Y)^{\otimes 2} \to H_*(\mathcal{P}_{X,f}Y)$$ has been defined and studied by \cite{Max23} if $Y$ is a closed manifold. This product, as the Chas-Sullivan product, is defined by intersecting on a space where the paths are concatenable and then concatenate.

$$\begin{array}{lcl}
    \Lambda : H_i(\mathcal{P}_{X,f}Y) \otimes H_j(\mathcal{P}_{X,f}Y) & \longrightarrow &  H_{i+j} (\mathcal{P}_{X,f}Y \times \mathcal{P}_{X,f}Y) \\
     & \overset{\textup{inters.}}{\longrightarrow} &  H_{i+j-n}(\mathcal{P}_{X,f}Y \ftimes{\ev_1}{\ev_0} \mathcal{P}_{X,f}Y) \\
     & \overset{\textup{concat.}}{\longrightarrow} &  H_{i+j-n}(\mathcal{P}_{X,f}Y).
\end{array}$$

The Fibration Theorem \cite[Theorem 7.2.1]{BDHO23} states that $$H_*(\mathcal{P}_{X,f} Y) \cong H_*(X^2,(f\times f)^*\COY),$$ where the $C_*(\Omega X^2)$ module structure on $(f \times f)^*\COY$ is given by the transitive lifting function $\Phi_{\ev}.$ To abbreviate the notations, we will denote this homology $H_*(X^2,\COY;f)$ or just $H_*(X^2,\COY)$ when $f$ is implied.

\begin{defi}

    Let $Y$ be a topological space, $X$ be a $n$-dimensional closed, oriented, path-connected and pointed manifold and $f : X \to Y$ be a continuous map. We define a degree $-n$ product $$\PP : H_*(X^2, \COY)^{\otimes 2} \to H_*(X^2, \COY)$$ by the composition of the four maps

$$\begin{array}{crl}
     & H_i(X^2, \COY) \otimes H_j(X^2, \COY)  &\\
    \overset{(-1)^{nj}K}{\longrightarrow} & H_{i+j}(X^4, C_*(\Omega Y^2)) & \simeq H_{i+j}(\mathcal{P}_{X,f} Y \times \mathcal{P}_{X,f} Y)   \\
     \overset{D_!}{\longrightarrow} & H_{i+j-n}(X^3, D^* C_*(\Omega Y^2)) & \simeq H_{i+j-n}(\mathcal{P}_{X,f} Y \ftimes{\ev_1}{\ev_0} \mathcal{P}_{X,f} Y) \\
     \overset{\tilde{m}}{\longrightarrow} & H_{i+j-n}(X^3, p^*\COY) & \simeq H_{i+j-n}(m(\mathcal{P}_{X,f} Y \ftimes{\ev_1}{\ev_0} \mathcal{P}_{X,f} Y)) \\
     \overset{p_*}{\longrightarrow} & H_{i+j-n}(X^2, \COY) & \simeq H_{i+j-n}(\mathcal{P}_{X,f} Y),
\end{array}$$

where :

\begin{itemize}
    \item $K : H_i(X^2, \COY) \otimes H_j(X^2, \COY) \to H_{i+j}(X^4, C_*(\Omega Y^2))$ is the topological DG Künneth map defined in \cite[Theorem 6.14]{Rie24},
    
    \item $D_! : H_{i+j}(X^4, C_*(\Omega Y^2)) \to H_{i+j-n}(X^3, D^* C_*(\Omega Y^2))$ is the shriek map of $D : X^3 \to X^4, \ D(a,b,c) = (a,b,b,c)$ and $p_* : H_{i+j-n}(X^3, p^*\COY) \to H_{i+j-n}(X^2, \COY)$ is the direct map of $p : X^3 \to X^2, \ p(a,b,c) = (a,c)$ as defined in \cite[Sections 9 and 10]{BDHO23},
    
    \item $\tilde{m} :  H_{i+j-n}(X^3, \Delta^* C_*(\Omega Y^2)) \to H_{i+j-n}(X^3, p^*\COY) $ is the morphism associated with the morphism of fibrations given by the concatenation $m : \mathcal{P}_{X,f} Y \ftimes{ev_1}{ev_0} \mathcal{P}_{X,f}Y \to p^*\mathcal{P}_{X,f}Y,$ (see \cite[Theorem 5.8]{Rie24}).
\end{itemize}

\end{defi}

\begin{rem}
    Intuitively, given two chains $\sigma, \tau$ in $\mathcal{P}_{X,f}Y$, this product will intersect $\ev_{1,*}(\sigma)$ with $\ev_{0,*}(\tau)$ on $X$, concatenate on the intersection and forget the intersection points in which it concatenated.

    We can remark that since the intersection is not along the diagonal $\Delta: X^2 \to X^4$, this product does not fall under the category of products studied in \cite{Rie24}. Nonetheless, the tools developed there enable to define and study it. 
\end{rem}

The reader may wonder why it takes two maps $\tilde{m} : H_*(X^3, p^*\COY)$ and $p_* : H_*(X^2, \COY)$ to translate in Morse theoretic terms the concatenation $m : H_*(\mathcal{P}_{X,f} Y \ftimes{\ev_1}{\ev_0} \mathcal{P}_{X,f} Y) \to H_*(\mathcal{P}_{X,f}).$ Here the fiber product $\mathcal{P}_{X,f} Y \ftimes{\ev_1}{\ev_0} \mathcal{P}_{X,f} Y$ is seen as the total space of the fibration 

$$\begin{array}{ccc}
    \mathcal{P}_{X,f} Y \ftimes{\ev_1}{\ev_0} \mathcal{P}_{X,f} Y & \to & X^3 \\
     (\gamma_1,\gamma_2) & \mapsto & (\ev_0(\gamma), \ev_1(\gamma_1), \ev_1(\gamma_2)) ,
\end{array}$$

while $\mathcal{P}_{X,f}Y$ is a fibration over $X^2$. Since the bases are not the same, the concatenation $m$ is not a morphism of fibrations in the sense given in \cite[Theorem 5.8]{Rie24}. 

Using \cite[Proposition 9.8.1]{BDHO23}, we obtain a direct generalization of \cite[Theorem 5.8]{Rie24} for relative morphism of fibrations (see Definition \ref{defi : relative morphism of fibrations}).

\begin{thm}\label{cor : morphism of fibrations relative to ...}
    Let $g: Z_0 \to Z_1$ be continuous map between oriented, closed, and connected manifolds each endowed with a fibration $F_0 \hookrightarrow E_0 \overset{\pi_0}{\to} Z_0$ and $F_1 \hookrightarrow E_1 \overset{\pi_1}{\to} Z_1.$ Let $\varphi : E_0 \to E_1$ be a morphism of fibrations relative to $g.$ Then $\varphi_* : C_*(E_0) \to C_*(E_1)$, corresponds, via the Fibration Theorem, up to chain homotopy equivalence, to the composition 
    $$g_*\circ \tilde{\varphi}_0 : C_*(Z_0,C_*(F_0)) \to C_*(Z_0, g^*C_*(F_1)) \to C_*(Z_1, C_*(F_1)).$$
    In other words,
    the following diagram is commutative up to chain homotopy:

    $$\xymatrix{
    C_*(Z_0, C_*(F_0)) \ar[d]^{\Psi_{E_0}} \ar[r]^{\tilde{\varphi}_0} & C_*(Z_0, g^*C_*(F_0)) \ar[r]^{g_*} & C_*(Z_1, C_*(F_1)) \ar[d]^{\Psi_{E_1}} \\
    C_*(E_0) \ar[rr]^{\varphi_*} &  & C_*(E_1).
    }$$
\end{thm}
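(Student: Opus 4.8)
The plan is to deduce Theorem \ref{cor : morphism of fibrations relative to ...} from the non-relative statement \cite[Theorem 5.8]{Rie24} by factoring a relative morphism of fibrations through an honest morphism of fibrations over a fixed base, the key observation being that a morphism $\varphi : E_0 \to E_1$ relative to $g : Z_0 \to Z_1$ is exactly an honest morphism of fibrations $E_0 \to g^*E_1$ over $Z_0$, followed by the canonical map $g^*E_1 \to E_1$ covering $g$. Concretely, I would first recall that $g^*E_1 = Z_0 \ftimes{g}{\pi_1} E_1$ is a fibration over $Z_0$ with fiber $F_1$, and that the universal map $\hat{g} : g^*E_1 \to E_1$ satisfies $\pi_1 \circ \hat g = g \circ \mathrm{pr}_{Z_0}$. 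Because $\pi_1 \circ \varphi = g \circ \pi_0$, the map $\varphi' : E_0 \to g^*E_1$, $\varphi'(e) = (\pi_0(e), \varphi(e))$, is well-defined and is a morphism of fibrations over $Z_0$ in the sense of \cite[Section 5]{Rie24}, and $\varphi = \hat g \circ \varphi'$.

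Next I would apply \cite[Theorem 5.8]{Rie24} to $\varphi' : E_0 \to g^*E_1$: choosing compatible transitive lifting functions, it yields a chain map $\tilde\varphi_0 : C_*(Z_0, C_*(F_0)) \to C_*(Z_0, g^*C_*(F_1))$ (identifying the fiber of $g^*E_1$ over $\star$ with $F_1$ and using that the pullback of the $C_*(\Omega Z_0)$-module $C_*(F_1)$-via-$g$ is precisely $g^*C_*(F_1)$) making the square with $\Psi_{E_0}$ and $\Psi_{g^*E_1}$ commute up to chain homotopy. Then I would invoke \cite[Proposition 9.8.1]{BDHO23}, which identifies the effect of the canonical map $\hat g : g^*E_1 \to E_1$ on enriched Morse complexes: under the Fibration Theorem, $\hat g_* : C_*(g^*E_1) \to C_*(E_1)$ corresponds up to chain homotopy to the coefficient map $g_* : C_*(Z_0, g^*C_*(F_1)) \to C_*(Z_1, C_*(F_1))$ appearing in \cite[Sections 9--10]{BDHO23} (this is the map "covering $g$ on the base, identity on fibers" whose existence and naturality is exactly the content of that proposition). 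Splicing the two homotopy-commutative squares along $\Psi_{g^*E_1}$ gives the desired diagram, since $\varphi_* = \hat g_* \circ \varphi'_*$ and $g_* \circ \tilde\varphi_0$ is the composite of the two coefficient-level maps.

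The main obstacle I anticipate is bookkeeping with the module structures and the lifting functions rather than anything deep: one must check that the transitive lifting function on $g^*E_1$ can be chosen so that (i) it is compatible with $\varphi'$ as required by \cite[Theorem 5.8]{Rie24}, and (ii) the induced $C_*(\Omega Z_0)$-module structure on $C_*(F_1)$ is literally the one pulled back along $g_\#: C_*(\Omega Z_0) \to C_*(\Omega Z_1)$ from the $C_*(\Omega Z_1)$-module structure on $C_*(F_1)$ coming from the chosen lifting function on $E_1$ — equivalently, that $\hat g$ intertwines the lifting functions up to the coherence needed for \cite[Proposition 9.8.1]{BDHO23} to apply. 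This is a matter of transporting a lifting function along $\hat g$ via $\Phi_{g^*E_1}(x, e, \tau) = (\tau(1), \Phi_{E_1}(e, g\circ \tau))$ and verifying transitivity, which is routine but must be spelled out. I would also note, for the base-point normalization, that one uses $\star_{Z_1} = g(\star_{Z_0})$ and $F_1 = \pi_1^{-1}(\star_{Z_1})$ is identified with the fiber of $g^*E_1$ over $\star_{Z_0}$.

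\begin{proof}
As explained above, since $\pi_1 \circ \varphi = g \circ \pi_0$, the assignment $\varphi'(e) := (\pi_0(e), \varphi(e))$ defines a map $\varphi' : E_0 \to g^*E_1$, where $g^*E_1 = Z_0 \ftimes{g}{\pi_1} E_1$ is the pullback fibration over $Z_0$, with fiber $F_1$ over $\star$. By construction $\pi_{g^*E_1} \circ \varphi' = \pi_0$, so $\varphi'$ is a morphism of fibrations over $Z_0$ in the sense of \cite[Section 5]{Rie24}, and $\varphi = \hat g \circ \varphi'$ where $\hat g : g^*E_1 \to E_1$ is the canonical projection.

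Fix a transitive lifting function $\Phi_{E_1}$ for $E_1$ and equip $C_*(F_1)$ with the resulting $C_*(\Omega Z_1)$-module structure. Define a lifting function for $g^*E_1$ by $\Phi_{g^*E_1}\big(x,e,\tau\big) = \big(\tau(1), \Phi_{E_1}(e, g\circ\tau)\big)$ for $(x,e)\in g^*E_1$ and $\tau\in\mathcal{P}Z_0$ with $\tau(0)=x$; one checks directly that this is a transitive lifting function and that the induced $C_*(\Omega Z_0)$-module structure on $C_*(F_1)$ is the pullback along $g_\#\colon C_*(\Omega Z_0)\to C_*(\Omega Z_1)$ of the above structure, i.e. it is $g^*C_*(F_1)$. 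Moreover, with this choice, $\hat g$ intertwines $\Phi_{g^*E_1}$ and $\Phi_{E_1}$ in the sense required by \cite[Proposition 9.8.1]{BDHO23}.

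Applying \cite[Theorem 5.8]{Rie24} to the morphism of fibrations $\varphi' : E_0 \to g^*E_1$ over $Z_0$ yields a chain map
\[
\tilde\varphi_0 : C_*(Z_0, C_*(F_0)) \longrightarrow C_*(Z_0, g^*C_*(F_1))
\]
fitting into a square with $\Psi_{E_0}$ and $\Psi_{g^*E_1}$ that commutes up to chain homotopy, with $\varphi'_*$ along the bottom. Applying \cite[Proposition 9.8.1]{BDHO23} to $\hat g : g^*E_1 \to E_1$ identifies, up to chain homotopy and via the Fibration Theorem, the map $\hat g_* : C_*(g^*E_1)\to C_*(E_1)$ with the coefficient map $g_* : C_*(Z_0, g^*C_*(F_1)) \to C_*(Z_1, C_*(F_1))$ of \cite[Sections 9--10]{BDHO23}; that is, the square with vertical maps $\Psi_{g^*E_1}$ and $\Psi_{E_1}$, top map $g_*$ and bottom map $\hat g_*$, commutes up to chain homotopy.

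Stacking these two homotopy-commutative squares along the common edge $\Psi_{g^*E_1}$, and using $\varphi_* = \hat g_* \circ \varphi'_*$, gives that
\[
\xymatrix{
C_*(Z_0, C_*(F_0)) \ar[d]^{\Psi_{E_0}} \ar[r]^{\tilde\varphi_0} & C_*(Z_0, g^*C_*(F_0)) \ar[r]^{g_*} & C_*(Z_1, C_*(F_1)) \ar[d]^{\Psi_{E_1}} \\
C_*(E_0) \ar[rr]^{\varphi_*} & & C_*(E_1)
}
\]
commutes up to chain homotopy, which is the assertion.
\end{proof}
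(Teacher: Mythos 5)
Your proposal is correct and follows essentially the same route as the paper: factor $\varphi$ as the morphism of fibrations $\varphi_0 : E_0 \to g^*E_1$ over $Z_0$ followed by the canonical map $g^*E_1 \to E_1$, then apply \cite[Theorem 5.8]{Rie24} to the first factor and \cite[Proposition 9.8.1]{BDHO23} to the second and splice the two homotopy-commutative squares. Your additional remarks on transporting the lifting function along $\hat g$ only make explicit what the paper leaves implicit.
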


\begin{proof}
     The map $\varphi$ factors through $$\varphi : \begin{array}{ccccc}
         E_0 & \overset{\varphi_0}{\to} & g^*E_1 & \to & E_1  \\
         e & \mapsto & (\pi_0(e), \varphi(e)) & \mapsto & \varphi(e) ,
    \end{array}$$ 

where $\varphi_0 :  E_0 \to g^* E_1$ is a morphism of fibrations over $Z_0.$

    $$\xymatrix{
E_0 \ar[d]_{\pi_0} \ar[r]  \ar@/^1pc/[rr]^{\varphi} & g^*E_1 \ar[r] \ar[d]^{pr_1} & E_1 \ar[d]^{\pi_1} \\
Z_0 \ar@{=}[r] & Z_0 \ar[r]_g & Z_1.
}$$
    
    Therefore, using \cite[Theorem 5.8]{Rie24} and \cite[Proposition 9.8.1]{BDHO23} we obtain that the following diagram is commutative up to chain homotopy:

    $$\xymatrix{
    C_*(Z_0, C_*(F_0)) \ar[d]^{\Psi_{E_0}} \ar[r]^-{\tilde{\varphi}_0} & C_*(Z_0, h^*C_*(F_1)) \ar[r]^{h_*} & C_*(Z_1, C_*(F_1)) \ar[d]^{\Psi_{E_1}} \\
    C_*(E_0) \ar[rr]^{\varphi_*} &  & C_*(E_1).
    }$$
\end{proof}

Using very similar techniques as those used in \cite[Section 7]{Rie24} to prove \cite[Theorem 7.1]{Rie24}, we will prove:

\begin{thm}[Theorem \ref{theorem A PPDG}]\label{thm : propriété de base de PPDG}
    The product $\PP$ is associative admits a neutral element and corresponds to the product $$\Lambda : H_{i}(\mathcal{P}_{X,f}Y) \otimes H_j(\mathcal{P}_{X,f} Y) \to H_{i+j-n}(\mathcal{P}_{X,f}Y)$$ defined by Stegemeyer \cite{Max23}. 
\end{thm}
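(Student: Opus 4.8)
The plan is to break the statement into its three assertions — correspondence with Stegemeyer's $\Lambda$, associativity, and the existence of a neutral element — and tackle the correspondence first, since associativity and unitality will then follow either from the topological side via this dictionary, or by replicating the Morse-theoretic bookkeeping of \cite[Section 7]{Rie24}.

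\textbf{Correspondence with $\Lambda$.} The key observation is that each of the four maps $K$, $D_!$, $\tilde m$, $p_*$ composing $\PP$ has a topological counterpart that, through the Fibration Theorem quasi-isomorphisms $\Psi$, corresponds to the analogous step in Stegemeyer's construction. First I would assemble the commuting (up to chain homotopy) squares: for $K$, the DG Künneth map corresponds to the homology cross product $H_*(\mathcal P_{X,f}Y)^{\otimes 2}\to H_*(\mathcal P_{X,f}Y\times \mathcal P_{X,f}Y)$ by \cite[Theorem 6.14]{Rie24}, using that $\mathcal P_{X,f}Y\times\mathcal P_{X,f}Y$ is the total space of the fibration over $X^4$ with fiber $\Omega Y^2$; for $D_!$, the shriek map corresponds under $\Psi$ to the Gysin/intersection map associated to the embedding $D:X^3\hookrightarrow X^4$ pulled back to total spaces, which is exactly the "intersecting on the base" step — here I would invoke the naturality of the shriek construction from \cite[Sections 9--10]{BDHO23} together with the fact that the pullback of $\mathcal P_{X,f}Y\times\mathcal P_{X,f}Y$ along $D$ is $\mathcal P_{X,f}Y\ftimes{\ev_1}{\ev_0}\mathcal P_{X,f}Y$; for $\tilde m$ followed by $p_*$, I would apply Theorem \ref{cor : morphism of fibrations relative to ...} to the concatenation $m$, which is a morphism of fibrations relative to $p:X^3\to X^2$, so that $p_*\circ\tilde m$ corresponds via $\Psi$ to $m_*:H_*(\mathcal P_{X,f}Y\ftimes{\ev_1}{\ev_0}\mathcal P_{X,f}Y)\to H_*(\mathcal P_{X,f}Y)$ — this is precisely the concatenation step. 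Pasting the three chain-homotopy-commuting squares and checking that the composite of the topological maps is Stegemeyer's $\Lambda$ (matching the sign $(-1)^{nj}$ and the degree conventions of \cite{Max23}) gives the correspondence.

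\textbf{Associativity and neutral element.} Once the correspondence is in place, one option is to deduce associativity directly from associativity of $\Lambda$ on the topological side (proved in \cite{Max23}), since $\Psi$ is a ring-level isomorphism in homology. However, to keep the argument self-contained and to mirror \cite[Theorem 7.1]{Rie24}, I would instead prove associativity intrinsically: expand both $\PP(\PP(a\otimes b)\otimes c)$ and $\PP(a\otimes \PP(b\otimes c))$ into a composition of Künneth maps, shriek maps along the various diagonal-type embeddings $X^k\to X^\ell$, relative-fibration morphisms for iterated concatenations, and direct maps, then identify both with a single "triple" operation built from $H_*(X^6,C_*(\Omega Y^3))$, the embedding $X^4\to X^6$ corresponding to $(a,b,c,d)\mapsto(a,b,b,c,c,d)$, the triple concatenation, and the projection $X^4\to X^2$. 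This requires the compatibility of $D_!$ with compositions of embeddings (functoriality of shriek maps, \cite[Sections 9--10]{BDHO23}), the associativity of the DG Künneth map, and the fact that the relative-morphism construction of Theorem \ref{cor : morphism of fibrations relative to ...} is functorial with respect to composition of concatenations. For the neutral element, the unit should be the image under an appropriate shriek/section map of the fundamental class of the constant-path section $X\to\mathcal P_{X,f}Y$ (the diagonal-constant path), i.e. the class represented by $[\Delta_X]\otimes[\mathrm{const}]$ in $H_n(X^2,C_*(\Omega Y))$; I would check $\PP(e\otimes \tau)=\tau=\PP(\tau\otimes e)$ by unwinding the definition — the intersection with the constant-path chain collapses the relevant factor of $X$ and the concatenation with a constant path is homotopic to the identity, exactly as in the Chas-Sullivan case in \cite[Theorem 7.1]{Rie24}.

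\textbf{Main obstacle.} I expect the hardest part to be the bookkeeping in the correspondence step — specifically, verifying that the pullback fibrations match up correctly at each stage (that $D^*$ of the product fibration over $X^4$ is genuinely the fiber-product fibration over $X^3$ with the lifting function induced by $\Phi_{\ev}$, and that $p^*C_*(\Omega Y)$ receives the $C_*(\Omega X^3)$-module structure for which $\tilde m$ is defined) and that the signs coming from the Künneth map, the shriek map, and the Koszul conventions in the twisted differential all combine to reproduce exactly Stegemeyer's sign $(-1)^{nj}$ with no discrepancy. The relative-morphism subtlety — that $m$ lands in $p^*\mathcal P_{X,f}Y$ rather than in $\mathcal P_{X,f}Y$ over the same base — is already handled by Theorem \ref{cor : morphism of fibrations relative to ...}, so the conceptual work is done; what remains is careful but routine diagram-pasting, modeled closely on \cite[Section 7]{Rie24}.
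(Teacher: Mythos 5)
Your overall architecture matches the paper's: the correspondence with $\Lambda$ is obtained by pasting chain-homotopy-commuting squares for $K$, for $D_!$, and for $p_*\circ\tilde m$ (the last via Theorem \ref{cor : morphism of fibrations relative to ...}), the unit is the class of the constant-path section $\Delta_*\tilde s([X])$, and associativity is proved intrinsically by a large diagram. However, your list of ingredients for the intrinsic associativity argument --- functoriality of shriek maps under composition of embeddings, associativity of the K\"unneth map, functoriality of the relative-morphism construction --- omits the one compatibility that is genuinely new and on which the whole section hinges. The composite $\PP\circ(\PP\otimes 1)$ literally contains $\cdots\circ D_!\circ K\circ(p_*\otimes 1)\circ\cdots$: the inner product ends with the \emph{direct} map $p_*$ of the projection $p:X^3\to X^2$, and to reassociate you must commute this direct map past the \emph{shriek} map $D_!$ of the embedding $D:X^3\to X^4$. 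This is not functoriality of either kind of map separately; it is a push--pull (base-change) identity for squares such as $D\circ p_{\hat 2}=(p\times 1)\circ\Delta_R:X^4\to X^4$, which holds only because $(p\times 1)^{-1}(D(X^3))=\Delta_R(X^4)$ and only up to a sign determined by comparing the orientations of the normal bundles $N_{\Delta_R}$ and $N_D$. The paper isolates this as Lemma \ref{lemme : commutativity shriek and direct} (proved by choosing adapted Morse data in tubular neighborhoods and identifying the moduli spaces $\functraj{\varphi}{w,z}$ and $\functraj{\varphi_{\lvert_X}}{w,z}$ together with their orientations), and the two squares of this type contribute the signs $(-1)^n$ needed for the final sign count to close. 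The same lemma is also what makes your unit computation work: one needs $D_!\circ(1\times\Delta)_*=(-1)^n J_*\circ J_!$ with $J=\Id\times\Delta$ before the ``concatenation with a constant path is the identity'' step applies. Without this ingredient the ``routine diagram-pasting'' does not close.

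Two smaller points. Your fallback of deducing associativity from the associativity of $\Lambda$ is legitimate but weaker than the paper's statement, since $\Lambda$ is only defined when $Y$ is a closed manifold while $\PP$ and the intrinsic proof make sense for $Y$ an arbitrary topological space. And in the correspondence step the real content beyond diagram-pasting is not just ``matching the sign $(-1)^{nj}$'': one must reconcile the Morse orientation convention $(\Or\ D(X^3),\Or\ N_D)=\Or\ X^4$ defining $D_!$ with the singular Thom-class convention $\tau_\Delta\cap[X^2]=\Delta_*[X]$ used in \cite{Max23}; the paper does this by an explicit cap-product computation showing $[\tau_D]\cap[X^4]=(-1)^nD_*[X^3]$, hence $[\tau_D]=[\tau_D^M]$ and $(\ev\times\ev)^*[\tau_D^M]=[\tau_{C^f}]$. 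You should name this verification explicitly rather than subsume it under general sign bookkeeping.
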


\section{Proof of Theorem \ref{theorem A PPDG}}\label{section : PPDG Proof of Theorem}

This product uses both direct maps and shriek maps. Therefore, we need to understand how these maps interact with each other. The only pre-existing result exploring this question is the following:

\begin{prop}\textup{(\cite[Proposition 10.6.1]{BDHO23})}
    Let $\varphi : X \to Y$ be a map of degree $d \in \Z$ between closed oriented manifolds of equal dimensions. Let $\F$ be a DG local system on $Y$. Then 
    $$\varphi_* \varphi_! = d\cdot \Id : H_*(Y,\F) \to H_*(Y,\F).$$
    In particular, if $d = \pm 1$, the map $\varphi_!$ is injective and the map $\varphi_*$ is surjective.
\end{prop}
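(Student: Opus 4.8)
The plan is to run, at the level of the twisted Morse complexes, the classical argument that $\varphi_*\varphi_!$ is multiplication by the degree, the only new ingredient being the bookkeeping of the differential graded coefficients. First I would unwind the chain-level definitions of $\varphi_!$ and $\varphi_*$ from \cite[Sections 9 and 10]{BDHO23}: after fixing Morse--Smale pairs $(f_X,\xi_X)$, $(f_Y,\xi_Y)$ and a representative of $\varphi$ transverse to the relevant (un)stable manifolds, both maps count $0$-dimensional moduli spaces of ``$\varphi$-mixed'' gradient trajectories, the coefficient in $C_*(\Omega Y)$ being obtained by evaluating the loop built from a $Y$-trajectory concatenated with the $\varphi$-image of an $X$-trajectory --- for $\varphi_*$ one flows down from $x\in\Crit(f_X)$, applies $\varphi$, then flows down to $y'\in\Crit(f_Y)$; for $\varphi_!$ one flows down from $y\in\Crit(f_Y)$, takes a $\varphi$-preimage, then flows down to $x\in\Crit(f_X)$. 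Since $\dim X=\dim Y=n$, both maps are degree preserving; and since the coefficient module on the $X$-side is $\varphi^*\F$, every evaluated loop lives in $\Omega Y$ and all module multiplications along $\varphi$ are compatible.

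The core step is to compute $\varphi_*\circ\varphi_!$ and show it is chain homotopic to $d\cdot\Id$. A rigid configuration contributing to $\langle\varphi_*\varphi_!\,y,y'\rangle$ is a $Y$-trajectory from $y$ to $\varphi(q)$, an $X$-trajectory running from $q$ through some critical point to $p$, and a $Y$-trajectory from $\varphi(p)$ to $y'$; this is the Morse incarnation of the correspondence $Y\xleftarrow{\varphi}X\xrightarrow{\varphi}Y$. I would then consider the $1$-parameter family obtained by rescaling the middle $X$-trajectory to zero length. One end of the resulting parametrized moduli space is $\varphi_*\varphi_!$; the other is its length-zero specialization, where $q=p=x_0\in X$ and the two $Y$-trajectories glue to a trajectory from $y$ to $y'$ through $\varphi(x_0)$. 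For $|y|=|y'|$ such a trajectory must be constant, which forces $y=y'$ and $x_0\in\varphi^{-1}(y)$; the length-zero locus is thus $\varphi^{-1}(y)$ if $y=y'$ and empty otherwise, hence $d$ signed points because $\deg\varphi=d$, each carrying only a constant loop and so contributing the unit of $C_*(\Omega Y)$. The length-zero map is therefore $d\cdot\Id$, and the parametrized cobordism yields $\varphi_*\varphi_!=d\cdot\Id$ on $H_*(Y,\F)$.

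The main obstacle is exactly this middle step: choosing the perturbation of $\varphi$ and the auxiliary data so that the parametrized moduli space is transversally cut out and its compactification is a $1$-manifold with boundary equal to the difference of the two maps, and then handling orientations carefully enough to obtain $+d\cdot\Id$ and not $\pm d\cdot\Id$ --- this is where the equal-dimension hypothesis is essential, guaranteeing that $\varphi_!$ and $\varphi_*$ preserve degree and that the length-zero locus is $0$-dimensional. A shorter alternative, if one is willing to invoke the Poincar\'e duality of \cite{BDHO23}, is to note that under it $\varphi_!$ corresponds to the pullback $\varphi^*$ and $\varphi_*$ to the cohomological Gysin map on $H^*(\,\cdot\,,\F)$, so that $\varphi_*\varphi_!$ corresponds to $\varphi^{\mathrm{coh}}_!\circ\varphi^*$; the projection formula then identifies this with cup product by $\varphi^{\mathrm{coh}}_!(1)=d\cdot 1$, reducing the statement to $\varphi_*[X]=d\,[Y]$, i.e. to the very definition of the degree.

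The final assertion is then immediate: if $d=\pm1$ then $\varphi_*\varphi_!=\pm\Id$, so $\pm\varphi_*$ is a left inverse of $\varphi_!$ and $\pm\varphi_!$ a right inverse of $\varphi_*$, whence $\varphi_!$ is injective and $\varphi_*$ is surjective.
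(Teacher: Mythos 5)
This proposition is not proved in the paper at all: it is quoted verbatim as \cite[Proposition 10.6.1]{BDHO23} and used as a black box (its only role here is as input to Lemma \ref{lemme : commutativity shriek and direct}). So there is no in-paper proof to compare yours against, and the honest answer is that you were asked to prove a cited result. That said, your sketch is the expected argument and is essentially sound: the parametrized moduli space interpolating between the broken configurations (flow time $T=\infty$, giving $\varphi_*\varphi_!$) and the direct intersection $\varphi^{-1}\bigl(W^u(y)\cap W^s(y')\bigr)$ (flow time $T=0$, giving $d$ signed points when $y=y'$ and nothing otherwise) is exactly the standard mechanism, and your deduction of injectivity/surjectivity from a one-sided inverse is correct. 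Two caveats are worth naming explicitly. First, in the DG setting the composition $\varphi_*\varphi_!$ is not a signed count but the sum $\sum_x \nu^{\varphi_!}_{y,x}\cdot\nu^{\varphi_*}_{x,y'}$ of products of chains in $C_*(\Omega Y)$, so the cobordism must be carried out at the level of evaluation maps of the \emph{compactified, parametrized} moduli spaces into $\Omega Y$, producing the degree-one chains of an actual chain homotopy against the twisted differentials; this is the genuinely technical content that your outline defers, and it is the part that occupies the proof in \cite{BDHO23}. Second, your proposed shortcut via Poincar\'e duality and the projection formula is not obviously available with DG coefficients (it requires a cup product and projection formula in that generality), so it should not be presented as an equivalent alternative without justification.
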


We will use this result to prove the following lemma:

\begin{lemme}\label{lemme : commutativity shriek and direct}
    Let $i_X : X^n \hookrightarrow W^{n+k}$ and $i_Y : Y^m \hookrightarrow Z^{m+k}$ be two $k-$codimensional embeddings of oriented, closed, and connected manifolds. Let $\F$ be a DG local system on $Z$ and $\Xi_X$, $\Xi_Y, \Xi_Z$ and $\Xi_W$ be sets of DG Morse data. Let $\varphi : W \to Z$ and $\psi : X \to Y$ such that $$\varphi \circ i_X = i_Y \circ \psi : X \to Z \textup{ and } \varphi^{-1}(i_Y(Y)) = i_X(X).$$ Assume that the normal bundle $N_X$ of $i_X$ and the normal bundle $N_Y$ of $i_Y$ are isomorphic and denote $\varepsilon \in \{-1,1\}$ such that $$\Or \ N_X = \varepsilon \cdot \Or \ N_Y.$$
    
    Then, the maps $$\varepsilon \cdot (\psi_* \circ i_{X,!} )\textup{ and } i_{Y,!} \circ \varphi_* : C_*(W, \Xi_W, \varphi^*\F) \to C_{*-k}(Y, \Xi_Y, i_Y^*\F), $$
    are chain homotopic.
\end{lemme}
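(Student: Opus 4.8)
The statement is a base-change (equivalently projection) formula for shriek maps: the hypotheses $\varphi\circ i_X=i_Y\circ\psi$, $\varphi^{-1}(i_Y(Y))=i_X(X)$ and $N_X\cong N_Y$ say precisely that $\varphi$ is transverse to $i_Y$ along $X$ and that the square formed by $\psi,i_X,i_Y,\varphi$ is cartesian. The plan is therefore not to manipulate $i_{X,!}$ and $i_{Y,!}$ in isolation, but to build one intermediate chain map $\Theta:C_*(W,\Xi_W,\varphi^*\F)\to C_{*-k}(Y,\Xi_Y,i_Y^*\F)$ and to show $\psi_*\circ i_{X,!}\simeq\Theta$ and $i_{Y,!}\circ\varphi_*\simeq\varepsilon\cdot\Theta$; since $\varepsilon^2=1$, this yields the lemma. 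Note that the target is well-formed because $i_X^*\varphi^*\F=(\varphi\circ i_X)^*\F=(i_Y\circ\psi)^*\F=\psi^*i_Y^*\F$, so $\psi_*$ indeed lands in $C_{*-k}(Y,\Xi_Y,i_Y^*\F)$.

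First I would fix convenient DG Morse data. Using the relative tubular neighbourhood theorem, choose tubular neighbourhoods $V_X\cong N_X$ of $X$ in $W$ and $V_Y\cong N_Y$ of $Y$ in $Z$ with $\varphi(V_X)\subseteq V_Y$, such that $\varphi|_{V_X}$ is identified with the fibrewise-linear bundle map $N_X\to N_Y$ induced by $d\varphi$ over $\psi$; by hypothesis this bundle map is a fibrewise isomorphism, and $\varepsilon$ records, via $\Or\ N_X=\varepsilon\cdot\Or\ N_Y$, whether it preserves fibrewise orientations. Then pick the Morse functions and pseudo-gradients in $\Xi_W,\Xi_X$ (resp. $\Xi_Z,\Xi_Y$) compatible along these neighbourhoods in the form used to build $i_{X,!}$ (resp. $i_{Y,!}$) in \cite[Sections 9--10]{BDHO23} — e.g. so that the pseudo-gradient on $W$ is, near $V_X$, tangent to the normal fibres and restricts on $X$ to the chosen one, and similarly on $Z$. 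Since the shriek and direct maps depend on such data only up to chain homotopy, this costs nothing. I would then let $\Theta$ count the compactified trajectory spaces (with differential graded coefficients, as in \cite{BDHO23}) made of a negative half-trajectory in $W$ from a critical point of $f_W$ that meets $i_X(X)$, followed, after applying $\psi$, by a negative-gradient trajectory in $Y$ down to a critical point of $f_Y$; equivalently $\Theta$ may be presented through the Thom-class description of $i_{X,!}$ if that is the formulation retained in \cite{BDHO23}.

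The homotopy $\psi_*\circ i_{X,!}\simeq\Theta$ is then a routine gluing statement: the moduli space defining $\Theta$ is the fibre product over $i_X(X)$ of the trajectory space defining $i_{X,!}$ with the one defining $\psi_*$, and the algebraic count of its codimension-one boundary furnishes the chain homotopy. For $i_{Y,!}\circ\varphi_*\simeq\varepsilon\cdot\Theta$, the point is that $\varphi^{-1}(i_Y(Y))=i_X(X)$ \emph{exactly}: pushing a half-trajectory in $W$ that meets $i_X(X)$ forward by $\varphi$ yields a path in $Z$ meeting $i_Y(Y)$, and the two moduli spaces (the one built from $\varphi_*$ followed by $i_{Y,!}$, and the one defining $\Theta$) occur as the two relevant faces of a single parametrized moduli space, whose boundary count provides the homotopy. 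The fibrewise-linear isomorphism $N_X\cong N_Y$ is exactly what identifies the coherent orientations on these moduli spaces up to the global sign $\varepsilon$. At the one point where chain-level constructions must be reconciled with homology-level identities — in particular to normalise $\varphi_*$ on the pair $(V_Y,V_Y\setminus Y)$ and to deal with the equal-dimensional auxiliary maps arising in that reduction — I would invoke \cite[Proposition 10.6.1]{BDHO23}, using that a degree $\pm1$ map has injective shriek map and surjective direct map.

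The genuine difficulty is the transversality-and-gluing package behind these two homotopies: arranging all the relevant trajectory spaces (those for $i_{X,!}$, $i_{Y,!}$, $\psi_*$, $\varphi_*$, and for $\Theta$) to be regular for one coherent choice of data, checking that their compactifications are manifolds-with-boundary of the expected codimension, and — the delicate part — carrying out the orientation bookkeeping precisely enough to obtain the factor $\varepsilon$ on the nose rather than merely up to an overall sign. Once the local normal-bundle model and the compatible Morse data are in place, the rest is formal.
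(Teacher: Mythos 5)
Your strategy is genuinely different from the paper's. You interpose an intermediate map $\Theta$ defined by hybrid moduli spaces and propose to produce two chain homotopies $\psi_*\circ i_{X,!}\simeq\Theta$ and $i_{Y,!}\circ\varphi_*\simeq\varepsilon\cdot\Theta$ by gluing and by the boundary of a parametrized moduli space. The paper instead chooses the DG Morse data so adroitly that no homotopies are needed at all: it extends a Morse--Smale pair from $X$ to a tubular neighbourhood $U\subset W$ with the pseudo-gradient pointing outwards, so that $i_{X,!}$ becomes the literal projection $\alpha\otimes w\mapsto\alpha\otimes w$ for $w\in X$ and $0$ otherwise (and likewise for $i_{Y,!}$). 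With these data one checks by a short case analysis (using that trajectories starting outside $U$ never enter $U$, and that $\varphi^{-1}(\wb{s}{z})\subset X$) that both compositions are computed by the \emph{same} set of trajectory spaces $\wbi{u}{f_X}{w}\cap\varphi^{-1}\bigl(\wbi{s}{f_Y}{z}\bigr)$, whose two natural orientations differ exactly by $\varepsilon$; the two maps are then equal on the nose up to the sign, and arbitrary data are reached through continuation maps. A second step reduces general embeddings to inclusions via the factorization through the diffeomorphism onto the image, where $i_{X,!}=i_{X,*}^{-1}$ (this is where the degree-$\pm1$ consequence of \cite[Proposition 10.6.1]{BDHO23} actually enters, rather than where you invoke it). The paper's route buys an entirely elementary, homotopy-free argument; yours would be more robust to changes of data but is strictly harder to execute.

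That said, your sketch has a real soft spot precisely where you park the difficulty. The homotopy $i_{Y,!}\circ\varphi_*\simeq\varepsilon\cdot\Theta$ requires interpolating between ``push forward to $Z$, flow, then intersect with $Y$'' and ``intersect with $X$, then push forward and flow in $Y$''; you assert that the two moduli spaces are ``two faces of a single parametrized moduli space'' without saying what that family is, and it is not a standard one-parameter deformation of Morse data (the order of the intersection and flow operations changes, not just the data). Absent a concrete construction of that cobordism, together with its compactness and coherent orientations, the second homotopy is a claim rather than a proof. I would encourage you to notice that once you have chosen tubular neighbourhoods and Morse data as carefully as your first paragraph describes, you are already in a position to compute both compositions exactly and compare orientations directly --- at which point the parametrized moduli space becomes unnecessary.
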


\begin{proof}
   \textbf{\underline{Step 1:}} First assume that $X \subset W$ and $Y \subset Z$ are submanifolds. Then, $\psi = \varphi_{\lvert_X} : X \to Y$ and $\varphi^{-1}(Y) = X.$ It suffices to prove the result for well-chosen data $\Xi_X$, $\Xi_Y, \Xi_Z$ and $\Xi_W$, since the continuation maps are chain homotopy equivalence. Recall that since $X,Y,Z$ and $W$ are oriented manifolds, the orientations of the normal bundles are given by 
   
   $$\left(\Or \ Y, \Or \ N_Y\right) = \Or \ Z$$ and
    $$\left(\Or \ X, \Or \ N_X\right) = \Or \ W.$$
    Choose $\Xi_X$ and let $\Xi_W$ be the set of DG Morse data described in \cite[Section 9.2]{BDHO23} given by considering a tubular neighborhood $U \overset{i_U}{\subset} W$ of $X$ and extending the Morse-Smale pair $(f,\xi) \in \Xi_X$ in $(f_U,\xi_U) \in \Xi_U$ by taking the pseudo-gradient $\xi_U$ to point outwards $U$. 
    Orient the unstable manifolds by 
    $$
    \Or \ \wbi{u}{U}{x} = \left( \Or \ \wbi{u}{X}{x}, \Or \ N_X \right).
    $$
    With such data, $$C_*(X,\Xi_X,i_X^*\varphi^*\F) = C_{*+k}(U, \Xi_U, i_U^*\varphi^*\F).$$
    Then, extend $(f_U,\xi_U)$ to $(f_W,\xi_W) \in \Xi_W$. The shriek map of the inclusion is then given by
    $$\deffct{i_{X,!}}{C_*(W,\Xi_W,\varphi^*\F)}{C_{*-k}(X,\Xi_X, i_X^*\varphi^*\F)}{\alpha \otimes w}{\left\{ \begin{array}{cc}
        \alpha \otimes w & \textup{if } w\in X  \\
         0 & \textup{otherwise.} 
    \end{array} \right.}$$
    
    Apply the same procedure from a set of DG Morse data $\Xi_Y$ to obtain $\Xi_Z$ such that 

    $$\deffct{i_{Y,!}}{C_*(Z,\Xi_Z,\F)}{C_{*-k}(Y,\Xi_Y, i_Y^*\F)}{\alpha \otimes z}{\left\{ \begin{array}{cc}
        \alpha \otimes z & \textup{if } z\in Y  \\
         0 & \textup{otherwise.} 
    \end{array} \right.}$$

Denote $f_Z : Z \to \R$ and $f_Y : Y \to \R$ the Morse functions in the data sets $\Xi_Z$ and $\Xi_Y$ respectively.
With these data, the composition $\varphi_{\lvert_{X,*}} \circ i_{X,!}$ is given by $$\deffct{\varphi_{\lvert_{X,*}} \circ i_{X,!}}{C_*(W,\Xi_W,\varphi^*\F)}{C_{*-k}(Y,\Xi_Y, i_Y^*\F)}{\alpha \otimes w}{\left\{ \begin{array}{cc}
    \displaystyle \sum_{y} \alpha \cdot \nu^{\varphi_{\lvert_X}}_{w,y} \otimes y & \textup{if } w \in X  \\
     0 & \textup{otherwise,} 
\end{array} \right.}$$
    where $\nu^{\varphi_{\lvert_X}}_{w,y} \in C_{|w|-|y|}(\Omega Y)$ is the evaluation onto $\Omega Y$ of a chain representative system of the space of trajectories $$\functraj{\varphi_{\lvert_X}}{w,y} = \wbi{u}{f_X}{w} \cap \varphi_{\lvert_{X}}^{-1}\left( \wbi{s}{f_Y}{y} \right).$$

    The composition $i_{Y,!} \circ \varphi_* $ is given by $$\deffct{i_{Y,!} \circ \varphi_*}{C_*(W,\Xi_W,\varphi^*\F)}{C_{*-k}(Y,\Xi_Y, i_Y^*\F)}{\alpha \otimes w}{\left\{ \begin{array}{cc}
    \displaystyle \sum_{y \in \Crit(f_Y)} \alpha \cdot \nu^{\varphi}_{w,y} \otimes y & \textup{if } w \in X  \\
     0 & \textup{otherwise.} 
\end{array} \right.}$$

Indeed, for all $w \in \Crit(f_W)$,  $$\varphi_*(\alpha \otimes w) = \sum_{z \in \Crit(f_Z)} \alpha \cdot \nu^{\varphi}_{w,z} \otimes z,$$ and there are three cases:

\begin{enumerate}
    \item If $z \notin Y$, then $i_{Y,!}(\alpha \cdot \nu^{\varphi}_{w,z} \otimes z) = 0.$\\

    \item If $w \notin X$ and $z \in Y$, then $\wbi{u}{f_W}{w} \cap X = \emptyset $ since the pseudo-gradient $\xi_W$ points outwards $U$ and therefore, a trajectory that begins outside of $U$ cannot reach $U$. Moreover, for the same reason, $\wbi{s}{f_Z}{z} = \wbi{s}{f_Y}{z} \subset Y$ and since we assume that $\varphi^{-1}(Y) = X$, it follows that $\varphi^{-1}\left(\wbi{s}{f_Z}{z} \right) \subset X$.  In particular  $\functraj{\varphi}{w,z} = \wbi{u}{f_W}{w} \cap \varphi^{-1}\left( \wbi{s}{f_Z}{z} \right) = \emptyset$ and $\nu^{\varphi}_{w,z} = 0.$\\

    \item If $w \in X$ and $z \in Y$, then $i_{Y,!}(\alpha \cdot \nu^{\varphi}_{w,z} \otimes z) = \alpha \cdot \nu^{\varphi}_{w,z} \otimes z.$
\end{enumerate}

    To conclude this first step, it suffices to prove that $$\forall w \in \Crit_i(f_X)\subset \Crit_{i+k}(f_W) , \forall z \in \Crit_j(f_Y) \subset \Crit_{j+k}(f_Z), \quad \functraj{\varphi_{\lvert_X}}{w,z} = \functraj{\varphi}{w,z}$$ and compare their orientations.

    Let $a \in \functraj{\varphi_{\lvert_X}}{w,z} = \wbi{u}{f_X}{w} \cap \varphi_{\lvert_X}^{-1}\left(\wbi{s}{f_Y}{z}\right).$ Then, $a \in \wbi{u}{f_W}{w} \cap \varphi^{-1}\left(\wbi{s}{f_Z}{z}\right) = \functraj{\varphi}{w,z} $ since $\wbi{u}{f_X}{w} \subset \wbi{u}{f_W}{w}$ and $\wbi{s}{f_Y}{z} = \wbi{s}{f_Z}{z}$.
    Therefore $$\functraj{\varphi_{\lvert_X}}{w,z} \subset \functraj{\varphi}{w,z}.$$

    Let $a \in \functraj{\varphi}{w,z}.$ In particular $a \in \varphi^{-1}\left(\wbi{s}{f_Z}{z}\right) = \varphi^{-1}\left(\wbi{s}{f_Y}{z}\right) \subset \varphi^{-1}(Y) = X.$ Therefore, $$a \in \left( \wbi{u}{f_W}{w} \cap X \right) \cap \varphi_{\lvert_X}^{-1}\left(\wbi{s}{f_Y}{z}\right) = \wbi{u}{f_X}{w} \cap \varphi_{\lvert_X}^{-1}\left(\wbi{s}{f_Y}{z}\right) $$

    and $$\functraj{\varphi}{w,z} \subset \functraj{\varphi_{\lvert_X}}{w,z}.$$

    It follows that $\functraj{\varphi}{w,z} = \functraj{\varphi_{\lvert_X}}{w,z}$ and we now compare their orientations.

    $$\left( \Or \ \functraj{\varphi}{w,z}, \Or \ \wbi{u}{Z}{z}\right) = \Or \ \wbi{u}{W}{w}.$$

    \begin{align*}
        \left( \Or \ \functraj{\varphi_{\lvert_X}}{w,z}, \Or \ \wbi{u}{Z}{z} \right) &= \left( \Or \ \functraj{\varphi_{\lvert_X}}{w,z}, \Or \ \wbi{u}{Y}{z}, \Or \ N_Y\right) \\
        &= \left( \Or \ \wbi{u}{X}{w}, \Or \ N_Y \right)\\
        &= \varepsilon \left( \Or \ \wbi{u}{X}{w}, \Or \ N_X \right) \\
        &= \varepsilon \cdot  \Or \ \wbi{u}{W}{w}.
    \end{align*} 

    Therefore, with these data, for all $w\in \Crit(f_X) \subset \Crit(f_W)$ and $y \in \Crit(f_Y) \subset \Crit(f_Z),$ $$\nu^{\varphi_{\lvert_X}}_{w,y} = \varepsilon \cdot \nu^{\varphi}_{w,z}$$ and $$\varepsilon\cdot (\varphi _{\lvert_{X,*}} \circ i_{X,!})= i_Y \circ \psi.$$

     \textbf{\underline{Step 2:}} In the general case, we consider $i_X : X \to W$ as the composition of the orientation-preserving diffeomorphism $i_X : X \to i_X(X)$  with the inclusion $i: i_X(X) \to W$. Denote $j : i_Y(Y) \to Z$ the inclusion.

    Consider the following diagram:
     $$\xymatrix{
     C_{*-k}(X,\Xi_X,i_X^*\varphi^*\F)  \ar[d]^{\psi_*} & C_{*-k}(i_X(X), i_X^*\Xi_X, i^*\varphi^*\F) \ar[l]^{i_{X,*}^{-1}= i_{X,!}} \ar[d]^{\varphi_{\lvert_{i_X(X),*}}} & C_{*}(W,\Xi_W, \varphi^*\F) \ar[l]^-{i_!} \ar[d]^{\varphi_*} \\
     C_{*-k}(Y,\Xi_Y, i_Y^*\F) & C_{*-k}(i_Y(Y), i_Y^*\Xi_Y, j^*\F) \ar[l]^-{i_{Y,*}^{-1} = i_{Y,!}} & C_*(Z,\Xi_Z, \F) \ar[l]^-{j_!}.
     }$$

     We proved in the first step that the right square commutes up to chain homotopy up to the sign $\varepsilon$. The fact that $i_{X,!} = i_{X,*}^{-1}$ and $i_{X,!} = i_{X,*}^{-1}$ is a direct consequence of \cite[Proposition 10.6.1]{BDHO23} stated above. The left square commutes up to chain homotopy by the composition property of direct maps. 
\end{proof}

We now prove that the product $\PP$ is associative and admits a neutral element.

\begin{prop}
    The product $\PP$ is associative at the homology level.
\end{prop}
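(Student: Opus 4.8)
The plan is to introduce a \emph{triple Path-product} $\PP^{(3)} : H_*(X^2,\COY)^{\otimes 3}\to H_{*-2n}(X^2,\COY)$ and to show that both $\PP(\PP(\tau_1\otimes\tau_2)\otimes\tau_3)$ and $\PP(\tau_1\otimes\PP(\tau_2\otimes\tau_3))$ coincide with $\PP^{(3)}(\tau_1\otimes\tau_2\otimes\tau_3)$. I would define $\PP^{(3)}$ as the composition of: the iterated topological DG Künneth map \cite[Theorem 6.14]{Rie24} landing in $H_*(X^6,C_*(\Omega Y^3))\cong H_*\big((\mathcal{P}_{X,f}Y)^{\times 3}\big)$; the shriek map $D_{3,!}$ of the codimension-$2n$ embedding $D_3:X^4\hookrightarrow X^6$, $(a,b,c,d)\mapsto(a,b,b,c,c,d)$; the map $\tilde m_3$ attached by Theorem~\ref{cor : morphism of fibrations relative to ...} to the triple concatenation $\mathcal{P}_{X,f}Y\ftimes{\ev_1}{\ev_0}\mathcal{P}_{X,f}Y\ftimes{\ev_1}{\ev_0}\mathcal{P}_{X,f}Y\to p_3^*\mathcal{P}_{X,f}Y$, where $p_3:X^4\to X^2$, $(a,b,c,d)\mapsto(a,d)$; and the direct map $p_{3,*}$. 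Because $\mathcal{P}Y$ is the Moore path space, the triple concatenation is \emph{strictly} associative, so $\tilde m_3$ is unambiguous and, at the space level, agrees with both iterates $m\circ(m\times\Id)$ and $m\circ(\Id\times m)$ of the concatenation $m$.

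The comparison will rely on four families of compatibilities, all either available in the references or established above: (i) coassociativity and naturality of the topological DG Künneth map \cite[Theorem 6.14]{Rie24} (in particular $K(\phi_*x\otimes y)=(\phi\times\Id)_*K(x\otimes y)$); (ii) functoriality of shriek maps under composition of embeddings and of direct maps under composition, see \cite[Sections 9 and 10]{BDHO23}, which allows $D_{3,!}$ and $p_{3,*}$ to be split off one coordinate at a time; (iii) functoriality of the assignment $\varphi\mapsto\tilde\varphi$ of Theorem~\ref{cor : morphism of fibrations relative to ...}, together with its naturality with respect to forgetting a base coordinate that the concatenation does not see; and (iv) — the key input — Lemma~\ref{lemme : commutativity shriek and direct}, applied to the cartesian squares
\[
\xymatrix{
{\{(a,b,c,c',d)\}}\ar[d]_{\textup{forget }b} & {\{(a,b,c,d)\}}\ar[l]\ar[d]^{\textup{forget }b}\\
{\{(a,c,c',d)\}} & {\{(a,c,d)\}}\ar[l]
}
\]
(and its mirror, forgetting $c$ and intersecting the $b$-gluing), whose horizontal maps are the codimension-$n$ embeddings cutting out the diagonal in one factor. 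Their normal bundles are canonically isomorphic, so Lemma~\ref{lemme : commutativity shriek and direct} gives the interchange of "forget $b$" with "intersect $c=c'$" with sign $\varepsilon=+1$ for the orientation conventions of \cite[Section 9.2]{BDHO23}.

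Granting these, the argument is a diagram chase, run exactly as in the proof of \cite[Theorem 7.1]{Rie24}. I would first observe that, tautologically, $\PP(\PP(\tau_1\otimes\tau_2)\otimes\tau_3)$ equals the composition: $K_{12}$; $D^{(b)}_!$ (intersect the gluing point $b$ of $\gamma_1,\gamma_2$); $\tilde m^{(b)}$ (form $\gamma_1*\gamma_2$); then "forget $b$"; then $K(-\otimes\tau_3)$; $D^{(c)}_!$ (intersect the gluing point $c$); $\tilde m^{(c)}$ (form $(\gamma_1*\gamma_2)*\gamma_3$); then "forget $c$"; whereas $\PP^{(3)}$ is the same sequence with the "forget $b$" moved to just before "forget $c$" (using $p_{3,*}=(\textup{forget }c)_*\circ(\textup{forget }b)_*$ from (ii), the identification of $\tilde m_3$ with the composite of $\tilde m^{(b)}$ and $\tilde m^{(c)}$ from (iii), and strict associativity). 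Since $K(-\otimes\tau_3)$, $D^{(c)}_!$ and $\tilde m^{(c)}$ all ignore the coordinate $b$, moving "forget $b$" past them is justified by (i), (iv) and (iii) respectively, so the left bracketing equals $\PP^{(3)}(\tau_1\otimes\tau_2\otimes\tau_3)$. The right bracketing is handled by the mirror computation, moving "forget $c$" past the outer operations built from $\tau_1$. As both iterates equal $\PP^{(3)}$, the product $\PP$ is associative in homology.

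The hard part will be bookkeeping rather than ideas. One must track the DG local systems $C_*(\Omega Y^k)$ and their images under the partial concatenations through the chase, check that every square invoked is genuinely cartesian with matching \emph{oriented} normal bundles so that the sign $\varepsilon$ in Lemma~\ref{lemme : commutativity shriek and direct} is $+1$, and verify that the Koszul-type signs cancel between the two sides — in particular the normalization factors $(-1)^{nj}$ built into $\PP$ together with the signs in the Künneth and shriek formulas. A secondary point is that the cited compatibilities hold only up to chain homotopy, so, as in \cite[Section 7]{Rie24}, one must assemble these homotopies coherently — choosing compatible DG Morse data along the factorizations of $D_3$ and $p_3$ and using that continuation maps are chain homotopy equivalences — in order to obtain an honest equality in homology.
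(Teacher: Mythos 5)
Your architecture is essentially the paper's: the paper's proof is exactly a diagram chase through the triple product (its $5\times 5$ diagram factors both bracketings through $H_*(X^6,C_*(\Omega Y^3))$, $H_*(X^4,\Delta_2^*C_*(\Omega Y^3))$, etc.), the key input is the same interchange lemma for shriek and direct maps (Lemma \ref{lemme : commutativity shriek and direct}), applied to the same two cartesian squares you write down ($(p\times 1)^{-1}(D(X^3))=\Delta_R(X^4)$ and $(1\times p)^{-1}(D(X^3))=\Delta_L(X^4)$), together with the Künneth/shriek interchange of \cite[Lemma 6.17]{Rie24}. So the ideas are right and match the source.

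There is, however, one concrete claim in your argument that is false and would derail the final sign check if taken at face value: you assert that both interchange squares have $\varepsilon=+1$. With the orientation conventions of the paper one finds $\Or\, N_{\Delta_R}=(-1)^n\Or\,X=\Or\,N_D$, so the square involving $\Delta_R$ and $p_{\hat 2}$ indeed has $\varepsilon=+1$; but for the mirror square the diagonal sits in a different position relative to the remaining factors, and the computation gives $\Or\,N_{\Delta_L}=\Or\,X$ while $\Or\,N_D=(-1)^n\Or\,X$, so that square commutes only up to $\varepsilon=(-1)^n$. This is not a harmless discrepancy you can wave away in the bookkeeping: that extra $(-1)^n$ is precisely what combines with the Künneth interchange signs $(-1)^{ni}$, $(-1)^{nk}$ and the normalizations $(-1)^{nj}$ to make the two bracketings agree. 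If both $\varepsilon$ were $+1$, the two iterates would differ by $(-1)^n$ and associativity would fail for odd $n$. So the fix is local — redo the orientation comparison for the $\Delta_L$ square — but as written the proof does not close.
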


\begin{proof}

We denote $\F^k = C_*(\Omega Y^k)$, $\F = \F^1$ and $H_* = H_*(X^2,\F)$. Consider the diagram

\[
\footnotesize
\xymatrix
@C=20pt
{
H_*^{\otimes 3} \ar[r]^-{K \otimes 1} \ar[d]^{1 \otimes K} & H_*(X^4,\F^2) \otimes H_* \ar[d]^K \ar[r]^-{D_! \otimes 1} \ar@{}[rd]|{\blue{(1)}} & H_*(X^3,D^*\F^2) \otimes H_* \ar[d]^K \ar[r]^-{\Tilde{m} \otimes 1} &  H_*(X^3,p^*\F^2) \otimes H_* \ar[d]^K \ar[r]^-{p_{*} \otimes 1} &  H_*^{\otimes 2} \ar[d]^{K}\\
H_* \otimes H_*(X^4,\F^2) \ar[d]^{1 \otimes D_!} \ar[r]^-{K} \ar@{}[rd]|{\blue{(2)}} & H_*(X^6,\F^3) \ar[d]^{(1 \times D)_!} \ar[r]^-{(D \times 1)_!} & H_*(X^5, D^*\F^2 \times \F) \ar[r]^-{\widetilde{m\times 1}} \ar[d]^{\Delta_{R,!}} & H_*(X^5, p^*\F^2 \times \F) \ar[r]^-{(p\times 1)_*} \ar[d]^{\Delta_{R,!}} \ar@{}[rd]|{\blue{(3)}} & H_*(X^4,\F^2) \ar[d]^{D_!} \\
H_* \otimes H_*(X^3,D^*\F^2) \ar[d]^{1 \otimes \Tilde{m}} \ar[r]^-{K} & H_*(X^5, \F\times D^*\F^2) \ar[r]^-{\Delta_{L,!}} \ar[d]^{\widetilde{1\times m}} & H_*(X^4, \Delta_2^*\F^3) \ar[d]^{\widetilde{1\times m}} \ar[r]^-{\widetilde{m\times 1}} & H_*(X^4,\Delta p_{\hat{2}}^*\F^2) \ar[r]^-{p_{\hat{2},*}} \ar[d]^{\Tilde{m}} & H_*(X^3,\Delta^*\F^3)\ar[d]^{\Tilde{m}} \\
H_* \otimes H_*(X^3,p^*\F^2) \ar[d]^{1 \otimes p_{*}} \ar[r]^-{K} & H_*(X^5, \F\times p^*\F^2) \ar[r]^-{\Delta_{L,!}} \ar[d]^{(1\times p)_*} \ar@{}[rd]|{\blue{(4)}} & H_*(X^4, p_{\hat{3}}^*D^*\F^2) \ar[d]^{p_{\hat{3},*}} \ar[r]^-{\Tilde{m}} & H_*(X^4, p_{1,4}^*\F^2) \ar[r]^-{p_{\hat{2},*}} \ar[d]^{p_{\hat{3},*}} & H_*(X^3,p^*\F) \ar[d]^{p_*} \\
H_*^{\otimes 2} \ar[r]^-{K} & H_*(X^4,\F^2) \ar[r]^-{D_!} & H_{*}(X^3,D^*\F^2) \ar[r]^-{\Tilde{m}} & H_{*}(X^3, p^*\COY)\ar[r]^-{p_{*}} & H_{*}
}
\]

where the embeddings $\Delta_R, \Delta_L : X^4 \to X^5$ and $\Delta_2 := (D \times 1) \circ \Delta_R : X^4 \to X^6$ are defined by 

$$\Delta_L = \Id \times \Delta \times \Id_{X^2}, \ \Delta_R = \Id_{X^2} \times \Delta \times \Id \textup{ and } \Delta_2 = \Id \times \Delta \times \Delta \times \Id$$

and the projections $p_{\hat{3}}, p_{\hat{2}} : X^4 \to X^3$ and $p_2 : X^4 \to X^2$ are defined by 

$$p_{\hat{3}}(a,b,c,d) = (a,b,d), \ p_{\hat{2}}(a,b,c,d) =(a,c,d) \textup{ and } p_{1,4} := p \circ p_{\hat{2}} = p \circ p_{\hat{3}} .$$

\underline{\textbf{The square \blue{(4)} commutes:}} This is a consequence of Lemma \ref{lemme : commutativity shriek and direct}. Indeed, since \\$D\circ p_{\hat{2}} = (p \times 1) \circ \Delta_R : X^4 \to X^4$, $(a,b,c,d) \mapsto (a,c,c,d)$ and we have the equality
$$(p\times 1)^{-1}(D(X^3)) = \{(a,b,c,c,d) \in X^5\} = \Delta_R(X^4).$$ 
We now compare the orientations of the normal bundle $N_{\Delta_R}$ of $\Delta_R(X^4) \subset X^5$ with the normal bundle $N_{D}$ of $D(X^3) \subset X^4.$
Choose the orientation $\Or \ N_{\Delta} = \Or \ X$ of the normal bundle $N_{\Delta}$ of $\Delta(X) \subset X^2$. Following \cite[Remark 1.1]{hingston2022product}, this induces $$\left(\Or \ T\Delta(X), \Or \ N_{\Delta} \right) = \Or \ T(X \times X) \lvert_{\Delta(X)}.$$

Therefore
\begin{align*}
    \left( \Or \ T\Delta_{R,*}(X^4),  \Or \ N_{\Delta} \right) &= \left( \Or \ TX, \Or \ TX, \Or \ T\Delta(X), \Or \ TX,  \Or \ N_{\Delta} \right)\\
    &= (-1)^n\left( \Or \ TX, \Or \ TX, \Or \ T(X \times X) \lvert_{\Delta(X)}, \Or \ TX \right) \\
    &= (-1)^n \Or \ T(X^5) \lvert_{\Delta_R(X^4)}.
\end{align*}

and it follows that $\Or \ N_{\Delta_R} = (-1)^n \Or \ N_{\Delta} = (-1)^n \Or \ X.$ We prove using the same arguments that $\Or \ N_D = (-1)^n \Or \ X = \Or \ N_{\Delta_R} $
and that the mentioned square commutes.

\underline{\textbf{The square \blue{(3)} commutes up to the sign} $(-1)^n$:} We use the same techniques as for square \blue{(4)}. We first check that $(1 \times p) \circ \Delta_L = D \circ p_{\hat{3}} : X^4 \to X^4$, $(a,b,c,d) \mapsto (a,b,b,d)$ and
$$(1 \times p)^{-1}(D(X^3)) = \{(a,b,b,c,d) \in X^5\} = \Delta_L(X^4).$$
The normal bundles $N_{\Delta_L} \cong TX \cong N_D$ are isomorphic and their orientations differ by $(-1)^n$.

\underline{\textbf{The squares \blue{(1)} and \blue{(2)} commute up to the sign $(-1)^{nk}$ and  $(-1)^{ni}$ respectively:}} This is a direct consequence of \cite[Lemma 6.17]{Rie24}. 

All the other squares are commutative by compatibility properties. Therefore,  \begin{multline*}
    p_{*}\circ \tilde{m} \circ \Delta_! \circ K \circ ( p_{*} \otimes 1) \circ (\tilde{m} \otimes 1) \circ (\Delta_! \otimes 1) \circ ( K \otimes 1) \\
     =  (-1)^{n(n-k)} (-1)^{ni} p_{*}\circ \tilde{m} \circ \Delta_! \circ K \circ (1 \otimes p_{*}) \circ (1 \otimes \tilde{m}) \circ (1 \otimes \Delta_!) \circ (1 \otimes K).
\end{multline*}

Let $\gamma \in H_i(X^2,C_*(\Omega Y))$, $\tau \in H_j(X^2,C_*(\Omega Y))$ and $\delta \in H_k(X^2,C_*(\Omega Y))$. 
\begin{align*}
    &\PP(\gamma \otimes \PP(\tau \otimes \delta))\\
    & =(-1)^{n(n-k)} (-1)^{n(n-j-k+n)} (-1)^{ni} p_{*}\circ \tilde{m} \circ \Delta_! \circ K \circ (1 \otimes p_{*}) \circ (1 \otimes \tilde{m}) \circ (1 \otimes \Delta_!) \circ (1 \otimes K)\\
    &(\gamma \otimes \tau \otimes \delta)
\end{align*}
and 
\begin{align*}
    &\PP(\PP(\gamma \otimes \tau) \otimes \delta)\\
    & =(-1)^{n(n-k)} (-1)^{n(n-j)}  p_{*}\circ \tilde{m} \circ \Delta_! \circ K \circ ( p_{*} \otimes 1) \circ (\tilde{m} \otimes 1) \circ (\Delta_! \otimes 1) \circ ( K \otimes 1)\\
    &(\gamma \otimes \tau \otimes \delta).
\end{align*}

 We conclude $$\PP( \gamma \otimes \PP(\tau \otimes \delta)) = \PP(\PP(\gamma \otimes \tau) \otimes \delta).$$

\end{proof}

Consider the inclusion of constant paths $s: X \to \mathcal{P}_{X,f} Y$, $s(a) = (a,a,c_{f(a)})$, where we denoted $c_{f(a)}$ the constant path at $f(a) \in Y.$ This is a morphism of fibrations relative to $\Delta : X \to X^2$.

\begin{prop}
     The product $\PP$ admits the neutral element $$\Delta_*\tilde{s}([X]) \in H_{n}(X^2,\COY)$$ denoted $1_{PP}$.
\end{prop}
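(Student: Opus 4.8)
The plan is to show that for any class $\tau \in H_*(X^2, \COY)$, one has $\PP(1_{PP} \otimes \tau) = \tau = \PP(\tau \otimes 1_{PP})$, where $1_{PP} = \Delta_* \tilde{s}([X])$. Since $\PP$ is defined as the composition $p_* \circ \tilde{m} \circ D_! \circ ((-1)^{nj}K)$, I would unwind the definition and track $1_{PP} \otimes \tau$ through each of the four maps, reducing the statement to a computation on the base performed with a convenient choice of Morse data. The key geometric input is that $1_{PP}$ is represented by the image under $\Delta_* \circ \tilde{s}$ of the fundamental class $[X]$, i.e.\ the Morse model of the inclusion $s : X \hookrightarrow \mathcal{P}_{X,f}Y$ of constant paths, composed with $\Delta : X \to X^2$; on the topological side, concatenating a path $\gamma$ with a constant path at an endpoint returns $\gamma$ (up to reparametrization), so the identity should hold topologically and then be transported via the Fibration Theorem.

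I would proceed as follows. \textbf{Step 1.} Use Theorem~\ref{thm : propriété de base de PPDG} (or rather the part of it already available, that $\PP$ corresponds via $\Psi$ to Stegemeyer's product $\Lambda$) together with the fact that $\Lambda$ has a neutral element given by the class of constant paths — this is proved in \cite{Max23}. Then it suffices to check that $\Psi$ sends $1_{PP} = \Delta_* \tilde{s}([X])$ to (a multiple of) Stegemeyer's unit. By Theorem~\ref{cor : morphism of fibrations relative to ...} applied to the relative morphism of fibrations $s : X \to \mathcal{P}_{X,f}Y$ over $\Delta : X \to X^2$, the diagram relating $\Delta_* \circ \tilde{s}$ and $s_*$ commutes up to chain homotopy, so $\Psi(\Delta_* \tilde{s}([X]))$ equals $s_*[X]$ up to a sign and chain homotopy; but $s_*[X]$ is exactly the homology class of the chain of constant paths, which is Stegemeyer's unit. \textbf{Step 2.} Alternatively, for a self-contained argument, I would compute directly: choose Morse data on $X$, $X^2$, $X^3$, $X^4$ so that $\Delta_!$ and $p_*$ act by the simple formulas recalled in Lemma~\ref{lemme : commutativity shriek and direct}, and use that the topological DG Künneth map $K$ of \cite[Theorem 6.14]{Rie24} is compatible with fundamental classes, so $K(1_{PP} \otimes \tau)$ is the image in $H_*(X^4, \F^2)$ of $\tilde s_*[X] \times \tau$. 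Then $D_!$ restricts this to $X^3$ along $D(a,b,c) = (a,b,b,c)$; because the first two coordinates of $1_{PP}$ are diagonal (it lives over $\Delta(X) \subset X^2$), this restriction is an isomorphism onto its image and introduces no loss. Next $\tilde m$ concatenates with the constant path, which on the level of the fiber $\Omega Y$ is (homotopic to) the identity of $\COY$-modules — here I would invoke that the transitive lifting function $\Phi_{\ev}$ was chosen so that concatenation with a constant path is the identity. Finally $p_*$ along $p(a,b,c) = (a,c)$ collapses the redundant middle coordinate and returns $\tau$. \textbf{Step 3.} Check the right-unit identity $\PP(\tau \otimes 1_{PP}) = \tau$ symmetrically, keeping track of the Koszul sign $(-1)^{nj}$ in the definition of $K$ so that the signs cancel and one genuinely gets $\tau$, not $\pm\tau$.

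The main obstacle I expect is \textbf{Step 2's middle claim}: that $\tilde m$, the Morse model of concatenation with a constant path, acts as the identity on the relevant homology. Topologically this is obvious (concatenating with a constant Moore path of length $0$, or reparametrizing, gives back the original path), but on the chain level one must verify that the Barraud–Cornea twisting data and the lifting function $\Phi_{\ev}$ are compatible enough that the induced map on $C_*(X^3, p^*\COY)$ is chain homotopic to the canonical isomorphism; this requires going back to the construction of $\tilde\varphi$ in Theorem~\ref{cor : morphism of fibrations relative to ...} and checking that feeding in the constant-path section yields the identity up to homotopy. Because of this, I would favor the shorter route of \textbf{Step 1}: reduce to Stegemeyer's already-established unit via the correspondence $\Psi$, so that the only new thing to check is that $1_{PP}$ maps to the class of constant paths, which follows cleanly from Theorem~\ref{cor : morphism of fibrations relative to ...} applied to $s : X \to \mathcal{P}_{X,f}Y$.
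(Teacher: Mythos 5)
Your preferred route (Step 1) is genuinely different from what the paper does. The paper proves unitality by the direct computation you sketch in Step 2: it introduces $J = \Id\times\Delta : X^2\to X^3$ and the forgetful morphism $\sigma : \mathcal{P}_{X,f}Y \ftimes{\ev_1}{\Id} X\to\mathcal{P}_{X,f}Y$, uses Lemma~\ref{lemme : commutativity shriek and direct} to replace $D_!\circ(1\times\Delta)_*$ by $(-1)^n\,J_*\circ J_!$ after verifying $(1\times\Delta)^{-1}(D(X^3))=J(X^2)$ and comparing the orientations of $N_J$ and $N_D$, observes that with Moore paths $m\circ(1\times s)=\sigma$ on the nose (so no chain homotopy for ``concatenation with a constant path'' is needed), and finally checks by an explicit choice of Morse data (a function on $X$ with a unique maximum $x_{\max}$, product data on $X^3$) that $\tilde\sigma\circ J_!\circ K(\cdot\otimes(\star\otimes x_{\max}))=\Id$, the signs $(-1)^{n^2}(-1)^n$ cancelling. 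Your Step 1 is a legitimate alternative: the correspondence with $\Lambda$ is established later in the paper but independently of unitality, so there is no circularity, and Theorem~\ref{cor : morphism of fibrations relative to ...} applied to $s$ over $\Delta$ does identify $\Psi(\Delta_*\tilde s([X]))$ with $s_*[X]$. What it buys is brevity; what it costs is that existence of the unit is outsourced to \cite{Max23}, and you must verify that $\Psi(1_{PP})$ equals Stegemeyer's unit \emph{exactly} and not up to sign (the two squares in the correspondence diagram each commute only up to signs, and a residual $-1$ would give $\PP(1_{PP}\otimes\tau)=-\tau$), and also that the Fibration Theorem quasi-isomorphism for the trivial fibration over $X$ sends $\star\otimes x_{\max}$ to the singular fundamental class.

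As a self-contained argument, your Step 2 has two concrete gaps. First, ``$D_!$ restricts \dots and introduces no loss'' is not a proof: the actual content is the interchange $D_!\circ(1\times\Delta)_*=(-1)^n J_*\circ J_!$, which requires the hypotheses of Lemma~\ref{lemme : commutativity shriek and direct} (the preimage condition and the orientation comparison producing the $(-1)^n$ that must cancel against the Koszul sign $(-1)^{nj}$ with $j=n$). Second, the statement that the resulting map is the identity is a chain-level computation with specific Morse data, showing that the moduli spaces $\overline{\mathcal{M}}^{J_!}((x,y,x_{\max}),(z,w))$ coincide, with orientations, with those of $\Id_!$. The point you flag as the main obstacle — whether $\tilde m$ applied to a constant path is the identity up to chain homotopy — is actually the easiest to dispose of: with the Moore path convention the concatenation with a length-zero constant path is literally the identity map of fibrations, so $J^*\tilde m\circ J^*\widetilde{1\times s}=\tilde\sigma$ holds at the chain level with no homotopy required.
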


\begin{proof}
    Let $J = \Id \times \Delta : X^2 \to X^3$, $(a,b) \mapsto (a,b,b).$ Denote $\F^i = C_*(\Omega Y^i)$ and $\sigma : \mathcal{P}_{X,f} \ftimes{\ev_1}{\Id} X \to \mathcal{P}_{X,f}$, the morphism of fibrations $\sigma(\gamma, \ev_1(\gamma)) = \gamma$ over $X^2.$
   Consider the following diagram:

   $$\small \xymatrix{
   H_*(X^2,\F) \otimes H_n(X,\Z) \ar[d]^{K} \ar[r]^-{1 \otimes \tilde{s}} & H_*(X^2,\F) \otimes H_n(X, \Delta^*\F) \ar[d]^K \ar[r]^-{1 \otimes \Delta_*} & H_*(X^2, \F) \otimes H_n(X^2, \F) \ar[d]^K  \\
   H_{*+n}(X^3, C_*(\Omega Y \times \{\star\})) \ar[d]^{J_!} \ar[r]^{\widetilde{1 \times s}} &  H_{*+n}(X^3, (1 \times \Delta)^*\F^2) \ar[d]^{J_!} \ar[r]^-{(1 \times \Delta)_*} & H_{*+n}(X^4, \F^{2}) \ar[d]^{D_!}   \\
   H_*(X^2, J^* C_*(\Omega Y \times \{\star\})) \ar[r]^{J^*\widetilde{1 \times s}} \ar[dr]^{\tilde{\sigma}} 
    & H_*(X^2, J^*(1 \times \Delta)^*\F^{2}) \ar[d]^{J^*\tilde{m}} \ar[r]^-{J_*}& H_*(X^3, D^* \F^{2}) \ar[d]^{\tilde{m}} \\
     & H_*(X^2,\F) \ar[r]^{J_*} \ar@{=}[dr] & H_*(X^3, p^*\F) \ar[d]^{p_*} \\
     & & H_*(X^2,\F).
   }$$

Remark that $p_* \circ J_* = (p \circ J)_* = \Id_* : H_*(X^2,\F) \to H_*(X^2,\F)$.
The map
$$m\circ (1 \times s) : \mathcal{P}_{X,f} Y \ftimes{\ev_1}{\Id} X \to \mathcal{P}_{X,f} Y \ftimes{\ev_1}{\ev} \Delta^*\mathcal{P}_{X,f} Y \to \mathcal{P}_{X,f} Y $$
$$m \circ (1 \times s) (\gamma, \ev_1(\gamma)) = m(\gamma,c_{\ev_1(\gamma)}) = \gamma$$

is equal to $\sigma,$ where we denoted $c_{\ev_1(\gamma)}$ the constant path at $\ev_1(\gamma).$ Therefore, $$J^*\tilde{m}\circ J^*\widetilde{1 \times s} = \tilde{\sigma}.$$

Since, $(1 \times \Delta) \circ J = D \circ J : X^2 \to X^4$, $(a,b) \mapsto (a,b,b,b)$ and
$$(1 \times \Delta)^{-1}(D(X^3)) = \{(a,b,b), \ (a,b) \in X^2\} = J(X^2),$$ it follows from Lemma \ref{lemme : commutativity shriek and direct} that $$D_! \circ (1 \times \Delta)_* = (-1)^n J_* \circ J_!.$$
Indeed, using the same arguments as in the previous proof, we prove that $\Or \ N_J = \Or \ X$ and $\Or \ N_D = (-1)^{n} \Or \ X.$
Therefore, this diagram commutes up to the sign $(-1)^n$.

Let $h : X \to \R$ be a Morse function with a unique local maximum $x_{\max} \in \Crit_n(h)$ such that $$x_{\max} = \star \otimes x_{\max} = [X] \in H_n(X,C_*(\{\star\})) = H_n(X,\Z).$$ Let $\xi$ be a pseudo-gradient associated with $h$. It suffices to check that $$\tilde{\sigma} \circ J_! \circ K(\cdot \otimes( \star \otimes x_{\max})) = \Id : H_*(X^2,\F) \to H_*(X^2,\F).$$

Let $v_0,v_1 : X \to \R$ be Morse functions and $\xi_0,\xi_1$ be associated pseudo-gradients. Denote $v=v_1 + v_2$ and $\xi_2 = \xi_0 + \xi_1.$ Consider $\Xi_2 = (v,\xi_2, \dots)$ be a set of DG Morse data on $X^2$ and $\Xi_3 = (v + h, \xi_2 + \xi, \dots)$ be a set of Morse data on $X^3.$

Let $\alpha \otimes (x,y) \in C_*(X^2,\Xi_2,C_*(\Omega Y))$ and consider $$J_! : C_*(X^3, \Xi_3, C_*(\Omega Y \times \{\star \})) \to C_{*-n}(X^2, \Xi_2, J^* C_*(\Omega Y \times \{\star \})).$$
Notice that
$$\overline{\mathcal{M}}^{J_!}((x,y,x_{\max}),(z,w)) = \wb{s}{z,w} \cap J^{-1}\left(\wb{u}{x,y,x_{\max}}\right) = \wb{s}{z,w} \cap \wb{u}{x,y} = \overline{\mathcal{M}}^{\Id_!}((x,y),(z,w)).$$

Since $\Or \ \wb{s}{x,y,x_{\max}} = \Or \ \wb{s}{x,y}$, these manifolds have the same orientation $$\left( \Or \ \wb{s}{x,y}, \Or \ \overline{\mathcal{M}}^{\Id_!}((x,y),(z,w)) \right) = \Or \ \wb{s}{z,w}$$ and $$\left( \Or \ \wb{s}{x,y}, \Or \ \overline{\mathcal{M}}^{J_!}((x,y,x_{\max}),(z,w)) \right) = \Or \ \wb{s}{z,w}.$$
 Hence, $J_!((\alpha, \star) \otimes (x,y,x_{\max})) = \Id_!((\alpha, \star) \otimes (x,y)).$

Since $\sigma_{*} : J^*C_*(\Omega Y \times \{\star\}) \to C_*(\Omega Y)$ is a morphism of $C_*(\Omega X^2)$-modules, $\tilde{\sigma}((\alpha, \star) \otimes (x,y)) = \alpha \otimes (x,y),$ 
then, $$\tilde{\sigma} \circ J_! \circ K (\cdot \otimes (\star \otimes x_{\max})) = \Id : H_*(X^2,C_*(\Omega Y)) \to H_*(X^2, C_*(\Omega Y)).$$

It follows that $$\PP(\tau \otimes \Delta_*\tilde{s}(\star \otimes x_{max})) = (-1)^{n^2} (-1)^n \tilde{\sigma}\circ J_! \circ K(\tau \otimes (\star \otimes x_{max})) = \tau$$ for all $\tau \in H_*(X^2, C_*(\Omega Y))$.
With similar arguments, we prove that $$\PP(\Delta_*\tilde{s}(\star \otimes x_{max}) \otimes \tau) = \tau$$ for all $\tau \in H_*(X^2, C_*(\Omega Y))$.

\end{proof}

We conclude by proving that this product computes the product defined by Stegemeyer in \cite{Max23}.

\begin{prop}
    The products $$\Lambda : H_{i}(\mathcal{P}_{X,f}Y) \otimes H_j(\mathcal{P}_{X,f} Y) \to H_{i+j-n}(\mathcal{P}_{X,f}Y)$$ and $$\PP : H_i(X^2,C_*(\Omega Y)) \otimes H_j(X^2,C_*(\Omega Y)) \to H_{i+j-n}(X^2, C_*(\Omega Y))$$ correspond via the Fibration Theorem.
\end{prop}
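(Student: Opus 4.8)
The plan is to exploit the fact that each of the four maps $K$, $D_!$, $\tilde m$, and $p_*$ composing $\PP$ has already been matched with a corresponding topological operation, and then to show that the composite of these topological operations is exactly Stegemeyer's $\Lambda$. First I would invoke the compatibility of the topological DG Künneth map $K$ with the topological cross product: by \cite[Theorem 6.14]{Rie24}, under the Fibration Theorem isomorphisms $H_*(X^2,\COY)\cong H_*(\mathcal{P}_{X,f}Y)$ and $H_*(X^4,C_*(\Omega Y^2))\cong H_*(\mathcal{P}_{X,f}Y\times\mathcal{P}_{X,f}Y)$, the map $(-1)^{nj}K$ corresponds to the homological cross product $\times$ (the sign being the standard Koszul sign appearing in the degree-$-n$ intersection convention, matching the one in Stegemeyer's definition of $\Lambda$). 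Next, by \cite[Sections 9 and 10]{BDHO23}, the shriek map $D_!$ corresponds, via the Fibration Theorem, to the umkehr/intersection map associated with the embedding $D:X^3\hookrightarrow X^4$ pulled back along the fibration; concretely, $\mathcal{P}_{X,f}Y\ftimes{\ev_1}{\ev_0}\mathcal{P}_{X,f}Y$ is the pullback of $\mathcal{P}_{X,f}Y\times\mathcal{P}_{X,f}Y\to X^4$ by $D$, so $D_!$ realizes the "intersecting on the base" step in $\Lambda$. Then, by Theorem \ref{cor : morphism of fibrations relative to ...} applied to the concatenation $m:\mathcal{P}_{X,f}Y\ftimes{\ev_1}{\ev_0}\mathcal{P}_{X,f}Y\to\mathcal{P}_{X,f}Y$, viewed as a morphism of fibrations relative to $p:X^3\to X^2$, the composite $p_*\circ\tilde m$ corresponds to $m_*$.

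Assembling these four correspondences, the composite $p_*\circ\tilde m\circ D_!\circ((-1)^{nj}K)$ corresponds via the Fibration Theorem to the composite
$$H_i(\mathcal{P}_{X,f}Y)\otimes H_j(\mathcal{P}_{X,f}Y)\xrightarrow{\times}H_{i+j}(\mathcal{P}_{X,f}Y\times\mathcal{P}_{X,f}Y)\xrightarrow{D!}H_{i+j-n}(\mathcal{P}_{X,f}Y\ftimes{\ev_1}{\ev_0}\mathcal{P}_{X,f}Y)\xrightarrow{m_*}H_{i+j-n}(\mathcal{P}_{X,f}Y),$$
which is precisely the three-step composition (cross product, intersection on the base, concatenation) defining $\Lambda$ in \cite{Max23} and recalled in Section \ref{section : PPDG Definition}. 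The one subtle point requiring care is the compatibility of the two Fibration Theorem isomorphisms sitting on the two ends of the $\tilde m$-square: here $\mathcal{P}_{X,f}Y\ftimes{\ev_1}{\ev_0}\mathcal{P}_{X,f}Y$ carries the structure of a fibration over $X^3$ with fiber $\Omega Y^2$, so one must check that the $C_*(\Omega X^3)$-module structure on $D^*C_*(\Omega Y^2)$ induced by the restriction of the transitive lifting function $\Phi_{\ev}\times\Phi_{\ev}$ agrees with the one used to define $D_!$, and likewise that $m$ intertwines the transitive lifting functions on source and target (i.e. $m\circ(\Phi_{\ev}\times\Phi_{\ev})$ and $\Phi_{\ev}\circ m$ agree up to the homotopy built into Theorem \ref{cor : morphism of fibrations relative to ...}). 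This is a direct computation with Moore paths: a lift of $(x_0,x_1,x_2)\in X^3$ along $D$-fibered paths is a pair of lifts glued at the middle coordinate, and $m$ simply concatenates, so the lifting functions are strictly compatible with the factorization $\varphi=\varphi_0$ followed by projection used in the proof of Theorem \ref{cor : morphism of fibrations relative to ...}.

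The main obstacle will be bookkeeping the signs and the identifications of module structures, not any conceptual difficulty: one must verify that the sign $(-1)^{nj}$ prepended to $K$ is exactly the sign by which the Morse-theoretic Künneth map of \cite[Theorem 6.14]{Rie24} differs from the topological cross product under the convention for degree-$-n$ shriek maps fixed in \cite[Sections 9 and 10]{BDHO23}, so that after composition with $D_!$ the total sign matches Stegemeyer's normalization of $\Lambda$; since $\Lambda$ and $\PP$ are each associative with the same unit (as just proved), even a residual global sign ambiguity would be pinned down by comparing the two products on the unit, but in fact no such ambiguity arises because each of the four building blocks has been individually identified with its topological counterpart on the nose. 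I would therefore organize the proof as: (i) state the four correspondences as a commuting ladder of squares between the two four-step compositions, each square being an instance of a cited result (Künneth compatibility, shriek-map naturality under the Fibration Theorem, Theorem \ref{cor : morphism of fibrations relative to ...}, direct-map naturality); (ii) observe that the outer composite along the topological side is $\Lambda$ by definition; (iii) conclude.
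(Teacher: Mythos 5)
Your overall architecture --- a ladder of squares between the two four-step compositions, with the concatenation step handled by Theorem \ref{cor : morphism of fibrations relative to ...} --- is the same as the paper's, and the bottom of the ladder (matching $p_*\circ\tilde m$ with $m_*$) is indeed dispatched exactly as you say. The gap is in the middle square. You treat the statement ``$D_!$ corresponds via the Fibration Theorem to the intersection step of $\Lambda$'' as a citation to \cite[Sections 9 and 10]{BDHO23}, but those sections only \emph{define} the Morse-theoretic shriek map; they do not identify it with Stegemeyer's intersection map, which is the composition $R_{C^f,*}\circ(\tau_{C^f}\cap -)$ for the specific Thom class $\tau_{C^f}=(\ev_1\times\ev_0)^*\tau_{\Delta}$ built with the ``singular'' orientation convention of \cite{hingston2022product}. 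That identification is the actual content of the proof. What the paper does is: (i) invoke \cite[Theorem 3.2]{Rie24}, which says that $D_!$ corresponds, up to the sign $(-1)^{n(i+j)+n}$, to cap product with a Thom class $\tau_D^M$ of $N_D$ normalized by the \emph{Morse} convention $\left(\Or \ D(X^3), \Or \ N_D\right) = \Or \ X^4$; and then (ii) prove by an explicit cap-product computation on chain representatives that $[\tau_D^M]=p_{2,3}^*[\tau_{\Delta}]$, equivalently $[\tau_D]\cap[X^4]=(-1)^n D_*[X^3]$, whence $(\ev\times\ev)^*[\tau_D^M]=[\tau_{C^f}]$. Without step (ii) you have matched $D_!$ only with \emph{some} umkehr map for $D$, possibly differing from Stegemeyer's by a sign or by a different normalization of the Thom class; this is precisely the kind of discrepancy the proposition is meant to exclude.

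Two smaller points. First, you locate the ``one subtle point'' in the compatibility of lifting functions across the $\tilde m$-square; in the paper that square is the easy one (it is literally Theorem \ref{cor : morphism of fibrations relative to ...}), and the real subtlety sits entirely in the orientation comparison above. Second, your fallback of pinning down ``a residual global sign'' by evaluating both products on the unit would not suffice: the discrepancies that actually arise ($(-1)^{nj}$ on one side, $(-1)^{n-ni}$ and $(-1)^{n(i+j)+n}$ on the other) depend on the degrees $i,j$, and a degree-dependent sign cannot be detected on the unit alone. The paper instead checks that the top and middle squares each fail to commute by the \emph{same} sign $(-1)^{n-n(i+j)}$, so that the two defects cancel and the full ladder commutes on the nose.
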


\begin{proof}

    This statement amounts to the commutativity of the following diagram.

    $$
    \xymatrix{
    H_i(\mathcal{P}_{X,f}Y) \otimes H_j(\mathcal{P}_{X,f}Y) \ar[d]^-{(-1)^{n-ni}\times} &  H_i(X^2,C_*(\Omega Y)) \otimes H_j(X^2,C_*(\Omega Y)) \ar[d]^-{(-1)^{nj}K} \ar[l]^-{\Psi \otimes \Psi}_-{\sim} \\
    H_{i+j}(\mathcal{P}_{X,f}Y \times \mathcal{P}_{X,f}Y) \ar[d]^-{R_{C^f,*} \circ \tau_{C^f} \cap} & H_{i+j}(X^4,C_*(\Omega Y^2)) \ar[d]^-{D_!} \ar[l]_-{\sim} \\
     H_{i+j-n}(\mathcal{P}_{X,f}Y \ftimes{\ev_1}{\ev_0} \mathcal{P}_{X,f}Y)  \ar[dd]^-{m_*} & H_{i+j-n}(X^3, D^*C_*(\Omega Y^2)) \ar[d]^-{\tilde{m}} \ar[l]_-{\sim} \\
      & H_{i+j-n}(X^3, p^*C_*(\Omega Y)) \ar[d]^-{p_*}\\
      H_{i+j-n}(\mathcal{P}_{X,f}Y) &  H_{i+j-n}(X^2, C_*(\Omega Y)) \ar[l]_-{\sim}.
    }
    $$

We know from Theorem \ref{cor : morphism of fibrations relative to ...} that the bottom square commutes. The top square commutes up to the sign $(-1)^{n-n(i+j)}$ and we now prove that the second square commutes up to the same sign.

For this purpose, we will now compare our orientation conventions to the one used in \cite{hingston2022product} and \cite{Max23}.

Denote $U_{\Delta} \subset X^2$ a tubular neighborhood of the diagonal $\Delta X$ in $X^2$ and denote $\tau_{\Delta} \in C^n(U_{\Delta}, U_{\Delta} \setminus \Delta X)$ a Thom class of the normal bundle $N_{\Delta} \cong TX$ such that $$\tau_{\Delta} \cap [X^2] = \Delta_*([X]).$$
This yields the "singular" orientation $\left(\Or \ N_{\Delta}, \Or \  T(\Delta X) \right)= \Or \ T(X \times X)\lvert_{\Delta X}$ which induces the orientation $\Or \ N_{\Delta} = (-1)^n \Or \ X.$

Denote $$U_{C^f} = (\ev_1 \times \ev_0)^{-1}U_{\Delta} \subset \mathcal{P}_{X,f} Y \times \mathcal{P}_{X,f} Y$$ and define the Thom class $\tau_{C^f} = (\ev_1 \times \ev_0)^*[\tau_{\Delta}] \in H^n(U_{C^f}, U_{C^f} \setminus (\ev_1 \times \ev_0)^{-1}(\Delta X)).$

Consider $p_{2,3} : X^4 \to X^2$, $p_{2,3}(a,b,c,d) = (b,c)$ and $U_D = p_{2,3}^{-1}(U_{\Delta})$ a tubular neighborhood of $D(X^3) \subset X^4$.
It is proven in \cite[Theorem 3.2]{Rie24} that the diagram

$$\xymatrix{
H_{i+j}(\mathcal{P}_{X,f}Y \times \mathcal{P}_{X,f}Y) \ar[d]^-{R_{C^f,*} \circ (\ev \times \ev)^*\tau^M_{D} \cap} & H_{i+j}(X^4,C_*(\Omega Y^2)) \ar[d]^-{D_!} \ar[l]_-{\sim} \\
H_{i+j-n}(\mathcal{P}_{X,f}Y \ftimes{\ev_1}{\ev_0} \mathcal{P}_{X,f}Y)  & H_{i+j-n}(X^3, D^*C_*(\Omega Y^2))  \ar[l]_-{\sim}
}$$

commutes up to the sign $(-1)^{n(i+j)+n}$ where $[\tau^M_D] \in H^n(U_D, U_D \setminus D(X^3))$ is the Thom class associated with the normal bundle $N_D \cong TX$ of the tubular neighborhood $U_D$ with the "Morse" orientation convention $$\left(\Or \ D(X^3), \Or \ N_{D}\right) = \Or \ X^4.$$ This yields the following equation.
$$[\tau_D^M] \cap [X^4] = (-1)^nD_*([X^3]).$$

It remains to prove that $$(\ev \times \ev)^*[\tau_D^M]  = (\ev_1 \times \ev_0)^*[\tau_{\Delta}] = [\tau_{C^f}] \in H^n(U_{C^f}, U_{C^f} \setminus (\ev \times \ev)^{-1}(DX^3)).$$

Consider $[\tau_D] = p_{2,3}^*[\tau_{\Delta}] \in H^n(U_D, U_D \setminus DX^3).$ Since $p_{2,3} \circ (\ev \times \ev) = \ev_1 \times \ev_0 : \mathcal{P}_{X,f}Y \times \mathcal{P}_{X,f} Y \to X^2$, it follows $$(\ev \times \ev)^*[\tau_D] = (\ev_1 \times \ev_0)^*[\tau_{\Delta}] = [\tau_{C^f}].$$

We now prove that $\tau_D = \tau_D^M$ by showing that $$[\tau_D] \cap [X^4] = (-1)^nD_*([X^3]).$$
Let $\sigma_1 \in C^n(X)$ be a representative of the fundamental class $[X]$ and $\tau_D \in C^n(U_D, U_D \setminus DX^3)$ be a representative of $[\tau_D]$. We will denote $\sigma_1=\sigma_2=\sigma_3=\sigma_4$.
\begin{align*}
    &\tau_D \cap (\sigma_1 \times \sigma_2 \times \sigma_3 \times \sigma_4)\\
    &= \sum_{\substack{J \sqcup K = \{1, \dots, 4n\} \\ |J| = n}} (-1)^{\textup{sgn}(J,K)} \tau_{D}(\sigma_1 \times \sigma_2 \times \sigma_3 \times \sigma_4)\lvert_{I^{J} \times \{0\}^K} (\sigma_1 \times \sigma_2 \times \sigma_3 \times \sigma_4)\lvert_{\{1\}^{J} \times I^K} \\
    &\overset{(1)}{=} \sum_{\substack{J \sqcup K = \{1, \dots, 4n\} \\ J \subset \{n+1, \dots, 3n\} \\ |J|=n}} (-1)^{\textup{sgn}(J,K)} \tau_{\Delta}(\sigma_2 \times \sigma_3)\lvert_{I^{J} \times \{0\}^K} (\sigma_1 \times \sigma_2 \times \sigma_3 \times \sigma_4)\lvert_{\{1\}^{J} \times I^K} \\
    &\overset{(2)}{=} \sum_{\substack{J' \sqcup K' = \{1, \dots, 2n\} \\ |J'|=n} } (-1)^{\textup{sgn}(J',K')} \tau_{\Delta}(\sigma_2 \times \sigma_3)\lvert_{I^{J'} \times \{0\}^{K'}} (\sigma_1 \times (\sigma_2 \times \sigma_3)\lvert_{\{1\}^{J'} \times I^{K'}} \times \sigma_4)\\
    &= (\Id \times \tau_{\Delta} \times \Id) \cap (\sigma_1 \times \sigma_2 \times \sigma_3 \times \sigma_4).
\end{align*}

    The equality $(1)$ is true since the chain $p_{2,3,*}(\sigma_1 \times \sigma_2 \times \sigma_3 \times \sigma_4)\lvert_{I^{J} \times \{0\}^K} = (\sigma_2 \times \sigma_3)\lvert_{I^{J} \times \{0\}^K}$ is degenerate and therefore equal to $0$ if $J \not \subset \{n+1, \dots, 3n\}$.
    
    The equality $(2)$ comes from the fact that the transformation $J \overset{\nu}{\to} J'$ is performed by the permutation $\nu= \begin{pmatrix}
        0 & 0 & I_n & 0\\
        I_n & 0 & 0 & 0 \\
        0 & I_n & 0 & 0 \\
        0 & 0 & 0 & I_n
    \end{pmatrix} $ that has signature $1$. Since $J \subset \{n+1, \dots, 3n\},$ then $\nu J \subset \{1, \dots, 2n\}$ and we consider $K'$ to be the complement of $J'=\nu J$ in $\{1, \dots, 2n\}$.
It follows that
    \begin{align*}
     [\tau_D] \cap [X^4] &= [\Id \times \tau_{\Delta} \times \Id] \cap [X^4] = (-1)^n (\Id \times \Delta \times \Id)_*[X^3] = (-1)^n D_*[X^3].  
    \end{align*}
    
    Therefore $[\tau_D] = [\tau_D^M]$ and $(\ev \times \ev)^*[\tau^M_D] = [\tau_{C^f}].$
\end{proof}

\section{Compatibility with the Chas-Sullivan product}\label{section : PPDG Compatibility with the Chas-Sullivan product}

In this section we suppose that $Y^k$ is a closed, oriented, connected manifold of dimension $k$ and we endow $H_{*}(Y,\COY)$ with the  ring structure induced by the product $$\CS : H_i(Y,\COY) \otimes H_j(Y,\COY) \to H_{i+j-n}(Y, \COY).$$
Let $f : X \to Y$ be a continuous map and let $I := I_f : X^2 \to Y \times X^2$, $I(x,x') = (f(x),x,x')$.
Define

$$\begin{array}{ccl}
    \cdot_M :H_i(Y, \COY) \otimes H_j(X^2, \COY)  & \overset{(-1)^{kj}K}{\longrightarrow} & H_{i+j-k}(Y \times X^2, C_*(\Omega Y^2)) \\
     & \overset{I_!}{\longrightarrow}  & H_{i+j-k}(X^2, I^*C_*(\Omega Y^2)) \\
     & \overset{\tilde{m}}{\longrightarrow} &  H_{i+j-k}(X^2, \COY),
\end{array}$$

where $m : \ls{Y} \ftimes{\ev}{\ev_0} \mathcal{P}_{X,f}Y \to \mathcal{P}_{X,f} Y$ is the morphism of fibrations over $X^2$ induced by the concatenation.

\begin{prop}\label{prop : module structure over Chas-Sullivan product}
    The pairing $\cdot_M : H_i(Y, \COY) \otimes H_j(X^2, \COY) \to H_{i+j}(X^2, \COY)$ induces a left $H_*(Y,\COY)$-module structure  on $H_*(X^2, \COY)$ with the additional property that if $\gamma \in H_*(Y, \COY)$ and $\tau, \tau' \in H_*(X^2, \COY)$, then 

    $$\gamma \cdot_M \PP(\tau \otimes \tau') = \PP((\gamma \cdot_M \tau) \otimes \tau').$$

\end{prop}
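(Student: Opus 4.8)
The statement has two parts: first, that $\cdot_M$ endows $H_*(X^2,\COY)$ with a left module structure over the ring $(H_*(Y,\COY),\CS)$; second, the compatibility identity $\gamma\cdot_M\PP(\tau\otimes\tau')=\PP((\gamma\cdot_M\tau)\otimes\tau')$. Both parts will be proved by the same large-diagram technique used for the associativity of $\PP$: unwind each composite into its constituent Künneth maps $K$, shriek maps of embeddings, direct maps of projections, and concatenation morphisms $\tilde m$, then fill the resulting grid with squares that commute either by the compatibility properties of $K$, $D_!$, $p_*$, $\tilde m$ established in \cite{Rie24} and \cite{BDHO23}, or by Lemma \ref{lemme : commutativity shriek and direct} which governs how a shriek map and a direct map attached to a commuting square of embeddings/projections interact (up to an explicit sign coming from the normal-bundle orientations). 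The unit axiom $1_{\CS}\cdot_M\tau=\tau$ will be checked as in the neutral-element proposition for $\PP$: represent $1_{\CS}\in H_k(Y,\COY)$ by $\Delta^Y_*\tilde s_Y([Y])$ using a Morse function on $Y$ with a single maximum, observe that the map $m\circ(s_Y\times 1):\ls Y\ftimes{\ev}{\Id}\mathcal P_{X,f}Y\to\mathcal P_{X,f}Y$ collapsing the constant loop is the projection $\sigma$, and use that $I_!$ applied to the class of the maximum reduces to the identity shriek map just as $J_!$ did.

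\textbf{Associativity of the module action.} For $(\gamma\cdot_M\gamma')\cdot_M\tau$ versus $\gamma\cdot_M(\gamma'\cdot_M\tau)$, I would write out a $5\times 5$ (or so) grid whose top-left corner is $H_*(Y,\COY)^{\otimes 2}\otimes H_*(X^2,\COY)$ and whose two boundary paths are the two iterated actions. The new ingredient compared to the $\PP$-associativity diagram is the interplay between the embedding $I_f:X^2\to Y\times X^2$ and the diagonal-type embedding $\Delta^Y:Y\to Y^2$ used inside $\CS$; one checks $(\Delta^Y\times\Id_{X^2})^{-1}(Y\times I_f(X^2)) = $ the appropriate submanifold and compares $\Or N_{\Delta^Y}$ with $\Or N_{I_f}$, both being copies of $TY$ up to sign. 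The associativity of the concatenation $m$ (of loops in $Y$ with a path, then with another concatenation) is exactly the kind of statement proved for $\CS$ in \cite[Theorem 7.1]{Rie24}, so the $\tilde m$-squares commute up to chain homotopy on the nose. Collecting the signs — each $K$ contributes a $(-1)$-power by \cite[Lemma 6.17]{Rie24}, each swap of a shriek past a direct map contributes a normal-bundle sign from Lemma \ref{lemme : commutativity shriek and direct} — should reproduce the signs $(-1)^{kj}$ etc.\ built into the definition of $\cdot_M$, so that the two boundary composites agree.

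\textbf{The compatibility identity.} This is the heart of the proposition and the step I expect to be the main obstacle, because it mixes the $k$-codimensional intersection defining $\cdot_M$ with the $n$-codimensional intersection defining $\PP$, so the grid involves embeddings into products of both $Y$'s and $X$'s and the bookkeeping of normal bundles $TY$ versus $TX$ is delicate. The strategy is: start from $H_*(Y,\COY)\otimes H_*(X^2,\COY)\otimes H_*(X^2,\COY)$, run the $\cdot_M$-then-$\PP$ path down one side and the $\PP$-first-then-$\cdot_M$ path along the other, and interpolate. Geometrically both sides describe "take a loop $\gamma$ at $f(x)$, a path $\tau$ from $x$ to $x'$, a path $\tau'$ from $x'$ to $x''$, intersect on the base to make them concatenable, and concatenate $\gamma\tau\tau'$"; the two orders of performing the two base intersections differ only by the order of two transverse intersections along disjoint factors, which commute up to a Koszul sign — this is again an instance of \cite[Lemma 6.17]{Rie24} for the Künneth factors together with Lemma \ref{lemme : commutativity shriek and direct} for the shriek/direct swaps, plus the (co)associativity and the fact that concatenating a loop on the far left commutes with concatenating two paths on the right (disjoint supports in the concatenation). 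The key squares to verify are: (i) $I_!$ commutes with $D_!$ up to sign because the embeddings $I_f:X^2\to Y\times X^2$ and $D:X^3\to X^4$ act on disjoint tensor factors, with $(-1)$-power $(-1)^{k\cdot(\dim\text{ of the }D\text{-factor})}$; (ii) $\tilde m$ for the loop-path concatenation commutes with $\tilde m$ for the path-path concatenation, which holds because the two concatenations happen at different endpoints and assemble into a single triple-concatenation $\gamma\cdot\tau\cdot\tau'$; (iii) the direct maps $p_*$ forgetting the two distinct intersection points commute. Once all squares are in place, a careful sign count — comparing the accumulated $(-1)^{kj}$, $(-1)^{nj'}$, and normal-bundle signs along the two boundary paths — should yield equality with no residual sign, matching the clean statement $\gamma\cdot_M\PP(\tau\otimes\tau')=\PP((\gamma\cdot_M\tau)\otimes\tau')$. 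The main difficulty is purely organizational: keeping the many fibrations, their base manifolds, and the induced module structures straight while the diagram grows, and ensuring every square is justified either by a cited compatibility lemma or by an explicit application of Lemma \ref{lemme : commutativity shriek and direct} with the correct $\varepsilon$.
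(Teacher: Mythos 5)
Your proposal follows essentially the same route as the paper: both parts are proved by unwinding the composites into large grids whose squares commute either by the Künneth compatibilities of \cite[Lemma 6.17]{Rie24} or by Lemma \ref{lemme : commutativity shriek and direct} applied to the commuting square $(\Id\times p)\circ(I\times\Id)=I\circ p$ with the normal-bundle sign $(-1)^{nk}$ cancelling against the Koszul signs from $K$. The only differences are organizational: the paper leaves the unit axiom implicit (which you propose to verify via the neutral element of $\PP$, a reasonable addition), and it charges the residual sign to the shriek-versus-direct square rather than to the shriek-versus-shriek square, but the total sign count comes out the same.
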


\begin{cor}
    This module structure induces a morphism of rings 

    $$\begin{array}{ccc}
       H_i(Y, \COY) &\to& H_{i+n-k}(X^2,\COY)    \\
         \gamma & \mapsto & \gamma \cdot_M 1_{PP}
    \end{array}.$$
\end{cor}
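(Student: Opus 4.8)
The plan is to verify directly that $\Phi\colon\gamma\mapsto\gamma\cdot_M 1_{PP}$ is an additive, unital, multiplicative map from $(H_*(Y,\COY),\CS)$ to $(H_*(X^2,\COY),\PP)$, using only the formal properties already in hand: that $\cdot_M$ is a unital left $H_*(Y,\COY)$-module structure (Proposition \ref{prop : module structure over Chas-Sullivan product}), that $1_{PP}$ is the neutral element of $\PP$ (proved above), and that the Chas--Sullivan ring $(H_*(Y,\COY),\CS)$ has a unit $1_{\CS}$ — which it does, since the loop--loop fibration $\Omega Y\hookrightarrow\mathcal{L}Y\to Y$ admits the constant-loop section, so $\CS$ has a neutral element by the DG Chas--Sullivan theorem.

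\textbf{First} I would record the easy points. Additivity of $\Phi$ is immediate since $\cdot_M$ is bilinear, so $\Phi$ is a morphism of graded abelian groups; a degree count ($1_{PP}\in H_n$ and $\cdot_M$ has degree $-k$) shows $\Phi$ raises degree by $n-k$, which is consistent with $\CS$ having degree $-k$ and $\PP$ having degree $-n$. Unitality is the identity $\Phi(1_{\CS})=1_{\CS}\cdot_M 1_{PP}=1_{PP}$, which is exactly the module-unit axiom.

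\textbf{Then} comes the only substantive point, multiplicativity, which is a short manipulation. Given $\gamma,\gamma'\in H_*(Y,\COY)$, apply the compatibility identity $\PP\big((\gamma\cdot_M\tau)\otimes\tau'\big)=\gamma\cdot_M\PP(\tau\otimes\tau')$ of Proposition \ref{prop : module structure over Chas-Sullivan product} with $\tau=1_{PP}$ and $\tau'=\gamma'\cdot_M 1_{PP}$:
\[
\PP\big((\gamma\cdot_M 1_{PP})\otimes(\gamma'\cdot_M 1_{PP})\big)=\gamma\cdot_M\PP\big(1_{PP}\otimes(\gamma'\cdot_M 1_{PP})\big).
\]
Since $1_{PP}$ is the neutral element of $\PP$, the right-hand side equals $\gamma\cdot_M(\gamma'\cdot_M 1_{PP})$, which by associativity of the module action is $(\gamma\,\CS\,\gamma')\cdot_M 1_{PP}$. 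Reading this as $\Phi(\gamma)\,\PP\,\Phi(\gamma')=\Phi(\gamma\,\CS\,\gamma')$ concludes, and the morphism-of-rings claim of the corollary follows, the ring axioms on both sides being those already established for $\CS$ and $\PP$.

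I do not expect a genuine obstacle: the corollary is a formal consequence of the structural results of the previous sections. The one point requiring care is signs — one must check that neither the left-module associativity $(\gamma\,\CS\,\gamma')\cdot_M\tau=\gamma\cdot_M(\gamma'\cdot_M\tau)$ nor the compatibility $\PP((\gamma\cdot_M\tau)\otimes\tau')=\gamma\cdot_M\PP(\tau\otimes\tau')$ carries a Koszul sign (as stated, they do not), and that $1_{PP}$ and $1_{\CS}$ act strictly as identities, so the displayed chain of equalities is sign-free. It is also worth stating explicitly which element $1_{\CS}\in H_k(Y,\COY)$ is the Chas--Sullivan unit (the class of constant loops, i.e.\ the image of $[Y]$ under the constant-loop section) so that the source ring is unambiguous.
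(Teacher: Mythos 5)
Your argument is correct and is exactly the formal deduction the paper intends (the corollary is stated without proof, as an immediate consequence of Proposition \ref{prop : module structure over Chas-Sullivan product} together with the unitality of $\PP$): multiplicativity follows from the compatibility identity applied with $\tau=1_{PP}$ combined with module associativity, and unitality from the module-unit axiom. Your degree count and sign check are also consistent with the paper's conventions, so nothing further is needed.
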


\begin{proof}[Proof of Proposition \ref{prop : module structure over Chas-Sullivan product}] 

We now prove that $\cdot_M$ is indeed a module structure, \emph{i.e} for all $\gamma,\gamma' \in H_*(Y,\Omega Y)$ and $\tau \in H_*(X^2,\Omega Y)$ 
$$\CS(\gamma \otimes \gamma') \cdot_M \tau = \gamma \cdot_M (\gamma' \cdot_M \tau).$$ 

Denote $\F^i = \COYi{i}$ for any $i  \in \N^*$ and $\F = \F^1 = \COY$
Let $\gamma,\gamma' \in H_*(Y,\F)$ and $\tau \in H_*(X^2,\F)$.

\begin{align*}
    \CS(\gamma \otimes \gamma') \cdot_M \tau &= (-1)^{k + k|\gamma'|} (-1)^{ k|\tau|} \tilde{m} \circ I_! \circ K \circ (\tilde{m} \otimes \Id) \circ (\Delta_! \otimes \Id) \circ (K \otimes \Id)(\gamma \otimes \gamma' \otimes \tau).
\end{align*}
\begin{align*}
    \gamma \cdot_M (\gamma' \cdot_M\tau) = (-1)^{ k|\tau|}(-1)^{k(|\gamma'| + |\tau|-k)}(-1)^{k|\gamma|} \tilde{m} \circ I_! \circ K \circ (\Id \otimes \tilde{m}) \circ (\Id \otimes I_!) \circ (\Id \otimes K)(\gamma \otimes \gamma' \otimes \tau).
\end{align*}

The equality $ \CS(\gamma \otimes \gamma') \cdot_M \tau = \gamma \cdot_M (\gamma' \cdot_M\tau)$ is hence equivalent to the commutativity of the following diagram up to the sign $(-1)^{k|\tau| + k|\gamma|}.$

$$\footnotesize \xymatrix{
 H_*(Y,\F)^{\otimes 2} \otimes H_*(X^2,\F) \ar[d]^{1 \otimes K} \ar[r]^{K \otimes 1} & H_*(Y^2, \F^{2}) \otimes H_*(X^2, \F) \ar[d]^K \ar[r]^{\Delta_! \otimes 1} \ar@{}[rd]|{\blue{(1)}}  & H_*(Y, \Delta^*\F^{2}) \otimes H_*(X^2, \F) \ar[d]^K \ar[r]^{\tilde{m} \otimes 1} & H_*(Y, \F) \otimes H_*(X^2, \F) \ar[d]^{K} \\
 H_*(Y, \F) \otimes H_*(Y \times X^2, \F^{2}) \ar@{}[rd]|{\blue{(2)}} \ar[d]^{1 \otimes I_!} \ar[r]^-K
 & H_*(Y^2 \times X^2, \F^{3}) \ar[r]^-{(\Delta \times 1)_!} \ar[d]^{(1 \times I)_!} & H_*(Y\times X^2, (\Delta\times 1)^*\F^{3}) \ar[r]^-{\widetilde{m \times 1}} \ar[d]^{I_!} & H_*(Y \times X^2, \F^{2}) \ar[d]^{I_!}\\
 H_*(Y,\F) \otimes H_*(X^2, I^*\F^{2}) \ar[d]^{1 \otimes \tilde{m}} \ar[r]^K
  &  H_*(Y \times X^2, (1 \times I)^* \F^{3}) \ar[d]^-{\widetilde{1 \times m}} \ar[r]^-{I_!}  & H_*(X^2,I^*(\Delta\times 1)^*\F^{3}) \ar[d]^-{I^*\widetilde{1 \times m}} \ar[r]^-{I^*\widetilde{m \times 1}}& H_*(X^2, I^*\F^{2}) \ar[d]^{\tilde{m}} \\
  H_*(Y, \F) \otimes H_*(X^2, \F) \ar[r]^K  & H_*(Y \times X^2, \F^{2}) \ar[r]^-{I_!}  & H_*(X^2, I^* \F^{2}) \ar[r]^{\tilde{m}}   & H_*(X^2,\F). 
}$$

Indeed, it is shown in \cite[Lemma 6.17]{Rie24} the squares \blue{(1)} and \blue{(2)} commute up to the sign $(-1)^{k|\tau|}$ and  $(-1)^{k|\gamma|}$ respectively.
We prove that $$\gamma \cdot_M \PP(\tau \otimes \tau') = \PP((\gamma \cdot_M \tau) \otimes \tau')$$ using similar arguments.
This equality is equivalent to the commutativity of the following diagram up to the sign $(-1)^{k|\tau'|} (-1)^{kn} (-1)^{n|\gamma|}.$

$$\footnotesize \xymatrix{
    H_*(Y, \F) \otimes H_*(X^2,\F)^{\otimes 2} \ar[d]^{1 \otimes K} \ar[r]^-{K \otimes 1} & H_*(Y\times X^2, \F^2) \otimes H_*(X^2, \F) \ar[d]^K \ar[r]^-{I_! \otimes 1} \ar@{}[rd]|{\blue{(1)}} & H_*(X^2, I^*\F^2) \otimes H_*(X^2, \F) \ar[d]^K \ar[r]^-{\tilde{m} \otimes 1} &  H_*(X^2,\F)^{\otimes 2} \ar[d]^K  \\
    H_*(Y,\F) \otimes H_*(X^4, \F^2) \ar[d]^{1 \otimes D_!} \ar[r]^-{K} \ar@{}[rd]|{\blue{(2)}} & H_*(Y \times X^4, \F^3) \ar[d]^{(1 \times D)_!} \ar[r]^-{(I \times 1)_!} & H_*(X^4, (I \times 1)^*\F^3) \ar[d]^{D_!} \ar[r]^-{\widetilde{m \times 1}} & H_*(X^4,\F^2) \ar[d]^{D_!}  \\
    H_*(Y,\F) \otimes H_*(X^3, D^*\F^2) \ar[d]^{1 \otimes \tilde{m}} \ar[r]^-{K}     & H_*(Y \times X^3, (1 \times D)^*\F^3) \ar[d]^{\widetilde{1 \times m}} \ar[r]^-{I_!}  & H_*(X^3, (I \times 1)^*D^*\F^3) \ar[d]^{\widetilde{1 \times m}} \ar[r]^-{D^*\widetilde{m \times 1}} &  H_*(X^3, D^*\F^2) \ar[d]^{\tilde{m}} \\
    H_*(Y, \F) \otimes H_*(X^3, p^*\F) \ar[d]^{1 \otimes p_*} \ar[r]^-{K} & H_*(Y \times X^3, (1 \times p)^*\F^3) \ar[d]^{(1 \times p)_*} \ar[r]^-{(I \times 1)_!} \ar@{}[rd]|{\blue{(3)}} & H_*(X^3, p^*I^*\F^2) \ar[d]^{p_*} \ar[r]^-{p^*\tilde{m}} & H_*(X^3,p^*\F) \ar[d]^{p_*}   \\
    H_*(Y, \F) \otimes H_*(X^2, \F) \ar[r]^-K &  H_*(Y \times X^2, \F^2) \ar[r]^-{I_!} & H_*(X^2, I^*\F^2) \ar[r]^-{\tilde{m}} & H_*(X^2,\F) 
    }$$

It is shown in \cite[Lemma 6.17]{Rie24} the squares \blue{(1)} and \blue{(2)} commute up to the sign $(-1)^{k|\tau'|}$ and  $(-1)^{k|\gamma|}$ respectively. We now prove that square \blue{(3)} commutes up to the sign $(-1)^{kn}$ using Lemma \ref{lemme : commutativity shriek and direct}.

First, we check that $(\Id \times p)  \circ (I \times \Id)=  I \circ p : X^3 \to Y \times X^2$, $(a,b,c) \mapsto (f(a),a,c)$ and 
$$(\Id \times p)^{-1}(I(X^2)) = \{(f(a),a,b,c), \ (a,b,c) \in X^3\} = (I \times \Id)(X^3).$$

The normal bundles $N_I \cong TY \cong N_{I \times \Id}$ are isomorphic and their orientations are given by the rules

$$(\Or \ X^2, \Or \ N_I) = (\Or \ Y, \Or \ X^2)$$
and $$(\Or \ X^3, \Or \ N_{I \times \Id}) = (\Or \ Y, \Or \ X^3).$$

We compute
\begin{align*}
    (\Or \ X^3, \Or \ N_{I}) &= (-1)^{nk}(\Or \ X^2, \Or \ N_I, \Or \ X)\\
    &= (-1)^{nk} (\Or \ Y, \Or \ X^2, \Or \ X) \\
    &= (-1)^{nk} (\Or \ Y, \Or \ X^3).
\end{align*}

Therefore $\Or \ N_I = (-1)^{nk} \Or \ N_{I \times \Id}.$ It follows that the square \blue{(3)} commutes up to the sign $(-1)^{nk}.$

\end{proof}

\section{Homotopy invariance}\label{section : PPDG homotopy invariance}

We prove here that the ring and module structure on $H_*(X^2, C_*(\Omega Y))$ only depends on the homotopy type of $f : X \to Y.$ More precisely, if $g : X \to Y$ is a continuous map homotopic to $f$, there exists a chain homotopy equivalence $$  \underbrace{C_*(X^2, \Xi,(f \times f)^*\COY)}_{\simeq C_*(\mathcal{P}_{X,f} Y)} \to \underbrace{C_*(X^2, \Xi,(g\times g)^*C_*(\Omega Y))}_{ \simeq C_*(\mathcal{P}_{X,g} Y)}$$   that induces  an isomorphism of rings and modules at the homology level. We recall the notations $$C_*(X, \Xi, \COY ; f) := C_*(X^2, \Xi,  (f \times f)^*\COY) $$ and $$C_*(X, \Xi, \COY ; g) := C_*(X^2, \Xi,  (g \times g)^*\COY).$$

We will also prove that if $X_1 \subset Y_1$ and $X_2 \subset Y_2$ are manifolds included in topological spaces $Y_1, Y_2$ such that there exists a homotopy equivalence of pairs $\varphi : (Y_1,X_1) \to (Y_2, X_2)$, then there exists an isomorphism of rings $$H_*((X_1)^2, C_*(\Omega Y_1)) \overset{\sim}{\to} H_*((X_2)^2, C_*(\Omega Y_2)).$$ 

\subsection{DG Identification morphism in the case of fibrations}

Let $(X,\star)$ and $(Y, \star_Y)$ be pointed, oriented, closed, and connected manifolds. Let $F \hookrightarrow E \overset{\pi}{\to} Y$ be a fibration and $\Phi : E \ftimes{\pi}{ev_0} \mathcal{P}Y \to E$ be a transitive lifting function associated with this fibration.
    Let $\varphi_0 ,\varphi_1 : X \to Y$ be two homotopic maps such that $\varphi_0(\star) = \varphi_1(\star) = \star_Y.$ Let $\varphi : X \times [0,1] \to Y$ be a homotopy between $\varphi_0 = \varphi( \cdot, 0)$ and $\varphi_1 = \varphi(\cdot, 1).$ Denote 
    $$\deffct{\eta_{\varphi}}{X}{\mathcal{P}Y}{x}{(s \mapsto \varphi(x,s))}$$ the map that sends $x \in X$ to the path going from $\varphi_0(x)$ to $\varphi_1(x)$ defined by the homotopy $\varphi.$

\begin{prop}\label{prop : Identification morphism for fibrations}
    Let $\Xi = (f,\xi, \dots)$ be a set of DG Morse data on $X$. 
    The morphism of fibrations $$\deffct{\Psi^{\varphi}}{\varphi_0^*E}{\varphi_1^*E}{(x,e)}{\left(x,\Phi(e,\eta_{\varphi}(x))\right)}$$ induces a chain homotopy equivalence $$\tilde{\Psi}^{\varphi} : C_*(X,\Xi,\varphi_0^*C_*(F)) \to C_*(X,\Xi, \varphi_1^*C_*(F))$$ that satisfies the same properties as the Identification morphism defined in \cite[Proposition 8.2.1]{BDHO23}. More precisely, it holds that:
    \begin{enumerate}
        \item \label{item : 1} If $\varphi=\Id$ is the constant homotopy at $\varphi_0$, then $\tilde{\Psi}^{\Id} = \Id.$
        \item \label{item : 2} If two homotopies $\varphi$ and $\varphi'$ are homotopic with fixed endpoints, then $\tilde{\Psi}^{\varphi}$ and $\tilde{\Psi}^{\varphi'}$ are chain homotopic.
        \item \label{item : 3}If $\varphi_{01}$ is a homotopy between $\varphi_0$ and $\varphi_1$ and $\varphi_{12}$ is a homotopy between $\varphi_1$ and $\varphi_2$, then denoting $\varphi_{02}$ the concatenation of $\varphi_{01}$ and $\varphi_{12}$ we have that $\tilde{\Psi}^{\varphi_{12}} \circ \tilde{\Psi}^{\varphi_{01}}$ and $\tilde{\Psi}^{\varphi_{02}}$ are homotopic. Therefore,
        $$\tilde{\Psi}^{\varphi_{12}} \circ \tilde{\Psi}^{\varphi_{01}}=\tilde{\Psi}^{\varphi_{02}}$$ in homology. In particular $\tilde{\Psi}^{\varphi}$ is always a chain homotopy equivalence of chain homotopy inverse $\tilde{\Psi}^{\Bar{\varphi}}$.
        \item \label{item : 4} The maps $\tilde{\Psi}^{\varphi} \circ \varphi_{0,!}$ and $\varphi_{1,!}$ are homotopic. 
        \item \label{item : 5}The maps $\varphi_{1,*} \circ \tilde{\Psi}^{\varphi} $and $\varphi_{0,*}$ are homotopic.
    \end{enumerate}
\end{prop}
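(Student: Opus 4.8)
The plan is to exploit that $\Psi^{\varphi}$ is a morphism of fibrations \emph{over} $X$: it commutes with the two projections $\varphi_0^*E\to X$ and $\varphi_1^*E\to X$, so \cite[Theorem 5.8]{Rie24} applies verbatim and already produces $\tilde{\Psi}^{\varphi}$ together with a square against the Fibration Theorem quasi-isomorphisms $\Psi_{\varphi_0^*E}$ and $\Psi_{\varphi_1^*E}$ that commutes up to chain homotopy. Each of the five properties will then be reduced to a statement about $\Psi^{\varphi}$ itself, using only the two structural axioms of a transitive lifting function --- normalization $\Phi(e,c_{\pi(e)})=e$ and strict transitivity $\Phi(\Phi(e,\tau),\tau')=\Phi(e,\tau\cdot\tau')$, which holds strictly with Moore paths --- together with the functoriality and homotopy-invariance of the construction $\tilde{(\cdot)}$ of \cite[Theorem 5.8]{Rie24} (which follow by the usual continuation-map arguments).

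Properties \ref{item : 1}--\ref{item : 3} are then formal. For \ref{item : 1}, a constant homotopy makes $\eta_{\varphi}(x)$ the constant path at $\varphi_0(x)$, so $\Phi(e,\eta_{\varphi}(x))=e$ by normalization, hence $\Psi^{\Id}=\Id$ and $\tilde{\Psi}^{\Id}=\Id$. For \ref{item : 2}, a homotopy of homotopies rel $X\times\{0,1\}$ induces a homotopy $\eta_{\varphi}\simeq\eta_{\varphi'}$ through maps $X\to\mathcal{P}Y$ with fixed endpoints, hence a homotopy $\Psi^{\varphi}\simeq\Psi^{\varphi'}$ through morphisms of fibrations over $X$, and the homotopy-invariance of $\tilde{(\cdot)}$ gives $\tilde{\Psi}^{\varphi}\simeq\tilde{\Psi}^{\varphi'}$. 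For \ref{item : 3}, up to the harmless Moore-path reparametrization one has $\eta_{\varphi_{02}}(x)=\eta_{\varphi_{01}}(x)\cdot\eta_{\varphi_{12}}(x)$, so strict transitivity of $\Phi$ gives $\Psi^{\varphi_{12}}\circ\Psi^{\varphi_{01}}=\Psi^{\varphi_{02}}$, and functoriality of $\tilde{(\cdot)}$ yields the stated identity in homology. Combining \ref{item : 1}--\ref{item : 3} with the fact that $\varphi\cdot\bar{\varphi}$ is homotopic rel endpoints to the constant homotopy shows that $\tilde{\Psi}^{\bar{\varphi}}$ is a chain homotopy inverse of $\tilde{\Psi}^{\varphi}$, which is the chain-homotopy-equivalence assertion contained in \ref{item : 3}.

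The substance is in \ref{item : 4} and \ref{item : 5}, the compatibility with the wrong-way and direct maps $\varphi_{i,!},\varphi_{i,*}$ of \cite[Sections 9 and 10]{BDHO23}. I would transport them to the total-space level: the square of \cite[Theorem 5.8]{Rie24}, together with the fact that under the Fibration Theorem isomorphisms $\varphi_{i,!}$ and $\varphi_{i,*}$ correspond to the wrong-way and direct maps of the canonical fibration maps $\hat{\varphi}_i:\varphi_i^*E\to E$, reduces \ref{item : 5} to $(\hat{\varphi}_1\circ\Psi^{\varphi})_*\simeq(\hat{\varphi}_0)_*$ and \ref{item : 4} to the wrong-way analogue. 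Both follow from the homotopy $\hat{\varphi}_1\circ\Psi^{\varphi}\simeq\hat{\varphi}_0$ of maps $\varphi_0^*E\to E$, witnessed explicitly by $H_s(x,e)=\Phi\bigl(e,\eta_{\varphi}(x)|_{[0,s]}\bigr)$, which at $s=0$ equals $\hat{\varphi}_0$ by normalization and at $s=1$ equals $\hat{\varphi}_1\circ\Psi^{\varphi}$; for \ref{item : 4} one additionally uses that $\Psi^{\varphi}$, being a fiberwise homotopy equivalence over $X$, has wrong-way map inverse to $(\Psi^{\varphi})_*$. A cleaner alternative is to check that the $C_*(\Omega X)$-module structure on $\varphi_i^*C_*(F)$ coming from the pulled-back lifting function agrees with the restriction of the $C_*(\Omega Y)$-structure on $C_*(F)$ along $C_*(\Omega\varphi_i)$, identify $\tilde{\Psi}^{\varphi}$ with the Identification morphism of \cite[Proposition 8.2.1]{BDHO23} for the module $C_*(F)$, and quote that proposition's analogues of \ref{item : 4} and \ref{item : 5} directly. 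I expect the main obstacle to be exactly this step --- reconciling the two a-priori-distinct descriptions of $\tilde{\Psi}^{\varphi}$, or, in the direct route, controlling the wrong-way map under the base change from $\hat{\varphi}_0$ to $\hat{\varphi}_1$ --- since both $\tilde{\Psi}^{\varphi}$ and the wrong-way maps are only specified up to chain homotopy.
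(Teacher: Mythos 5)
Your treatment of properties (1)--(3) is correct and matches the paper's: normalization of the lifting function for (1); a parametrized family $\Psi^{H_s}$ of morphisms of fibrations for (2), which the paper turns into an explicit chain homotopy (this is the precise content of your appeal to ``homotopy invariance of the construction $\tilde{(\cdot)}$''); and strict Moore-path transitivity of $\Phi$ plus functoriality of $\tilde{(\cdot)}$ (\cite[Corollary 5.18]{Rie24}) for (3), which in fact gives equality of chain maps, not just a homotopy.

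For (4) and (5) you take a genuinely different route, and that is where I see a gap. The paper never leaves the Morse level: it introduces the partial-lift families $\Psi^{s1}(x,e)=(x,\Phi(e,\eta_\varphi(x)|_{[s,1]}))$ and $\Psi^{0s}$, and writes down explicit chain homotopies $\kappa(\alpha\otimes x)=\bigl(s\mapsto \tilde{\Psi}^{s1}\circ\varphi_{s,!}\bigr)$ and $\kappa'=\bigl(s\mapsto\varphi_{s,*}\circ\tilde{\Psi}^{0s}\bigr)$ satisfying $\partial\kappa+\kappa\partial=\varphi_{1,!}-\tilde{\Psi}^{\varphi}\circ\varphi_{0,!}$ and its analogue for (5). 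Your homotopy $H_s(x,e)=\Phi(e,\eta_\varphi(x)|_{[0,s]})$ is exactly the total-space shadow of this interpolation, but you then want to descend through the Fibration Theorem, and this creates two problems. First, for (4) you need the Morse-theoretic shriek map $\varphi_{i,!}$ of \cite[Section 10]{BDHO23} to correspond, under the Fibration Theorem, to a topological Umkehr map for $\hat{\varphi}_i:\varphi_i^*E\to E$; no such correspondence is among the cited tools (the statement \cite[Proposition 9.8.1]{BDHO23} used elsewhere in the paper concerns direct maps only), and your remark that $\Psi^{\varphi}$ has wrong-way map inverse to $(\Psi^{\varphi})_*$ concerns a degree-$0$ map over $X$ and does not address the degree-shifting shriek map of $\varphi_i$ itself. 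Second, even for (5), transporting through the quasi-isomorphisms $\Psi_E$ only yields $\Psi_E\circ\varphi_{1,*}\circ\tilde{\Psi}^{\varphi}\simeq\Psi_E\circ\varphi_{0,*}$; to recover the claimed chain homotopy you must cancel $\Psi_E$, which requires observing that these quasi-isomorphisms are chain homotopy equivalences (true here, the complexes being degreewise free and bounded below, but this needs to be said). Your alternative --- identifying $\tilde{\Psi}^{\varphi}$ with the Identification morphism of \cite[Proposition 8.2.1]{BDHO23} and quoting its properties --- is closer to what the statement advertises, but as you yourself note the identification is the nontrivial step; the paper sidesteps it entirely by re-running the parametrized construction directly on the enriched complexes.
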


\begin{proof}
    \textbf{Proof of \ref{item : 1}:} If $b=\eta_{\Id}(x)$ is the constant path at $\varphi_0(x) = \pi(e)$, then $\Phi(e,b) =e$. It follows that  $\Psi^{\Id} = \Id$ and  $\tilde{\Psi}^{\Id} = \Id.$

    \textbf{Proof of \ref{item : 2}:} Let $H : X \times [0,1]^2 \to Y$ be a homotopy with fixed endpoints between $H(\cdot,\cdot, 0) = \varphi$ and $H(\cdot,\cdot, 1) = \varphi'$. For $s \in [0,1]$, denote $H(\cdot, \cdot,s) = : H_s$.
    Consider the family of morphism of fibrations $$\deffct{\Psi^{H_s}}{\varphi^*_0E}{\varphi_1^* E}{(x,e)}{(x,\Phi(e,\eta_{H_s}(x)))}$$ for $s \in [0,1].$ This is well-defined since $H_s(\cdot,0) = \varphi_0 = \varphi'_0$ and $H_s(\cdot,1) = \varphi_1 = \varphi'_1$ for all $s \in [0,1].$ For each $s \in [0,1]$, the morphism of fibrations $\Psi^{H_s}$ induces the morphism of complexes $$\tilde{\Psi}^{H_s} = \sum_n (\Psi^{H_s}_{n+1}\otimes 1)\m^n : C_k(X,\Xi,\varphi^*_0 C_*(F)) \to \bigoplus_{i+j = k} C_i(F) \otimes \Z \Crit_j(f).$$ Therefore, the map $$\deffct{\tilde{\Psi}^H} {C_k(X,\Xi,\varphi^*_0 C_*(F))}{\displaystyle\bigoplus_{i+j = k} C_{i+1}(F) \otimes \Z \Crit_j(f)}{\alpha \otimes x}{\displaystyle s \mapsto \sum_n (\Psi^{H_s}_{n+1} \otimes 1) \m^n(\alpha \otimes x)} $$ satisfies $$\partial \tilde{\Psi}^{H} + \tilde{\Psi}^H \partial = \tilde{\Psi}^{\varphi'} - \tilde{\Psi}^{\varphi}.$$

    \textbf{Proof of \ref{item : 3}:} This is a direct application of \cite[Corollary 5.18]{Rie24}. Indeed, since \begin{align*}
        \Psi^{\varphi_{12}}\circ \Psi^{\varphi_{01}}(x,e) &= (x, \Phi(\Phi(e,\eta_{\varphi_{01}}(x)),\eta_{\varphi_{12}}(x))) \\
        &= (x, \Phi(e, \eta_{\varphi_{01}}(x) \# \eta_{\varphi_{12}}(x))) \\
        &= (x, \Phi(e, \eta_{\varphi_{02}}(x)))\\
        &= \Psi^{\varphi_{02}}(x,e),
    \end{align*}
    
    it follows that $\tilde{\Psi}^{\varphi_{12}}\circ \tilde{\Psi}^{\varphi_{01}} = \tilde{\Psi}^{\varphi_{02}}.$ 

    \textbf{Proof of \ref{item : 4}:} For $s \in [0,1]$, define $$\deffct{\Psi^{s1}}{\varphi_s^*E}{\varphi_1^* E}{(x,e)}{(x, \Phi(e, \eta_{\varphi}(x)_{\lvert_{[s,1]}}))}$$ and consider $$ \deffct{\kappa}{C_*(Y,\Xi_Y, C_*(F))}{C_{*-k+1}(X,\Xi, \varphi_1^* C_*(F))}{\alpha \otimes x}{s \mapsto \tilde{\Psi}^{s1} \circ \varphi_{s,!}} $$

    Then, $$\partial \kappa + \kappa \partial = \varphi_{1,!} - \tilde{\Psi}^{\varphi} \circ \varphi_{0,!}. $$

    The proof of \ref{item : 5} is the same considering $$\deffct{\kappa'}{C_*(X,\Xi, \varphi_1^*C_*(F))}{C_{*+1}(Y,\Xi_Y, C_*(F))}{\beta \otimes y}{s \mapsto \varphi_{s,*} \circ \tilde{\Psi}^{0s},}$$

    where $$\deffct{\Psi^{0s}}{\varphi_0^*E}{\varphi_s^* E}{(x,e)}{(x, \Phi(e, \eta_{\varphi}(x)_{\lvert_{[0,s]}})).}$$

\end{proof}

\subsection{Invariance with regard to the homotopy type of \texorpdfstring{$f : X \to Y$}{f}}

Denote $\varphi_{f,g} : X \times [0,1] \to Y$ a homotopy between $\varphi_{f,g}(\cdot ,0) = f $ and $\varphi_{f,g}(\cdot,1) = g$. Denote $$\varphi^2_{f,g} : X^2 \times [0,1] \to Y^2, \ \varphi_{f,g}^2(a,b,t) = (\varphi_{f,g}(a,t), \varphi_{f,g}(b,t))$$ and $\varphi^2_{g,f} = \overline{\varphi^2_{f,g}} : X^2 \times [0,1] \to Y^2$. For readability, if $a \in X$, we will use the notation $$\eta(a) = (s\mapsto \varphi_{f,g}(a,s)) \in \mathcal{P}_{f(a) \to g(a)}Y.$$
Proposition \ref{prop : Identification morphism for fibrations} states that the morphism of fibrations $$\deffct{\Psi_{f,g} := \Psi^{\varphi^2_{f,g}}}{\mathcal{P}_{X,f} Y}{\mathcal{P}_{X,g} Y}{(x_0,x_1,\gamma)}{(x_0,x_1, \eta(x_0)^{-1}\gamma \eta(x_1))}$$ induces the chain homotopy equivalence 
$$\tilde{\Psi}_{f,g} : C_*(X, \Xi, C_*(\Omega Y) ; f) \to C_*(X, \Xi, C_*(\Omega Y) ;g)$$ with chain homotopy inverse $$\tilde{\Psi}_{g,f} := \widetilde{\Psi^{\varphi^2_{g,f}}} : C_*(X, \Xi, C_*(\Omega Y) ; g) \to C_*(X, \Xi, C_*(\Omega Y) ;f).$$

\begin{prop}\label{prop : ring iso homotopy type of f}
    The isomorphism $$\tilde{\Psi}_{f,g} : H_*(X, \COY ; f) \overset{\sim}{\to} H_*(X, \COY ; g)$$  is an isomorphism of rings.
\end{prop}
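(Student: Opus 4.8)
The plan is to show that $\tilde{\Psi}_{f,g}$ intertwines $\PP^f$ and $\PP^g$ by decomposing $\PP$ into its four constituent maps $K$, $D_!$, $\tilde{m}$, $p_*$, and checking compatibility of $\tilde{\Psi}$ with each one. The key observation is that the identification morphism $\tilde{\Psi}^{\varphi}$ of Proposition \ref{prop : Identification morphism for fibrations} is \emph{natural} with respect to all of these operations, so the statement reduces to five commuting (up to chain homotopy) squares, assembled into one big diagram whose outer rectangle is the desired identity $\PP^g \circ (\tilde{\Psi}_{f,g} \otimes \tilde{\Psi}_{f,g}) \simeq \tilde{\Psi}_{f,g} \circ \PP^f$.

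First I would record the naturality of $\tilde{\Psi}$ under the topological DG Künneth map $K$: since the relevant homotopies $\varphi^2_{f,g}$ on $X^2$ (resp.\ $\varphi^4_{f,g}$ on $X^4$) are obtained from $\varphi_{f,g}$ factorwise, and $K$ is built from the product structure on loop spaces, the square relating $K^f$, $K^g$ and the two copies of $\tilde{\Psi}$ commutes up to chain homotopy; this is essentially the statement that $\tilde{\Psi}^{\varphi\times\varphi'}=\tilde{\Psi}^{\varphi}\otimes\tilde{\Psi}^{\varphi'}$ after applying $K$, which follows from items \ref{item : 1}--\ref{item : 3} of Proposition \ref{prop : Identification morphism for fibrations} together with the construction of $K$ in \cite[Theorem 6.14]{Rie24}. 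Next, for the shriek map $D_!$: because $D : X^3 \to X^4$ is a map \emph{over} the homotopies (i.e.\ $\varphi^4_{f,g}\circ(D\times\Id) = $ the homotopy on $X^4$ restricted along $D$, which equals the homotopy used on $X^3$ pulled back via $D$), items \ref{item : 4} and \ref{item : 5} give that $\tilde{\Psi}$ commutes with $D_!$ up to homotopy — here one uses that $D^*(\varphi^4_{f,g})$ is exactly the homotopy defining $\tilde{\Psi}$ on $C_*(X^3, D^*\COYi{2})$. The same argument, using item \ref{item : 5} and the composition property of direct maps, handles the square for $p_* : C_*(X^3,p^*\COY)\to C_*(X^2,\COY)$.

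The remaining square — and the one I expect to be the main obstacle — is compatibility of $\tilde{\Psi}$ with the concatenation morphism $\tilde{m}$. Here one must check that the diagram
\[
\xymatrix{
C_*(X^3, D^*\COYi{2} ; f) \ar[r]^-{\tilde{m}^f} \ar[d]_{\tilde{\Psi}} & C_*(X^3, p^*\COY ; f) \ar[d]^{\tilde{\Psi}} \\
C_*(X^3, D^*\COYi{2} ; g) \ar[r]_-{\tilde{m}^g} & C_*(X^3, p^*\COY ; g)
}
\]
commutes up to chain homotopy. At the space level this amounts to the statement that the concatenation $m : \mathcal{P}_{X,f}Y \ftimes{\ev_1}{\ev_0} \mathcal{P}_{X,f}Y \to \mathcal{P}_{X,f}Y$ commutes, up to fiberwise homotopy, with the maps $\Psi^{\varphi^2}$ applied before and after: concretely, concatenating $\eta(a)^{-1}\gamma_1\eta(b)$ with $\eta(b)^{-1}\gamma_2\eta(c)$ yields $\eta(a)^{-1}\gamma_1\gamma_2\eta(c)$ (the two copies of $\eta(b)$ cancel up to a reparametrization homotopy), which is precisely $\Psi^{\varphi^2}(m(\gamma_1,\gamma_2))$. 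The cancellation of the middle path is not on the nose but holds up to a canonical homotopy, so one must either work with Moore paths (where the cancellation is genuine once one reparametrizes, using that $\eta(b)^{-1}\eta(b)$ is homotopic rel endpoints to the constant path) or invoke \cite[Proposition 9.8.1]{BDHO23} and Theorem \ref{cor : morphism of fibrations relative to ...} to pass from the homotopy-commutative square of fibrations to a chain-homotopy-commutative square of enriched complexes. Once all five squares are in hand, pasting them gives the big diagram, and since the signs introduced by Künneth ($(-1)^{nj}$) and by the grading shifts are the same on the $f$- and $g$-sides, the outer rectangle yields $\tilde{\Psi}_{f,g}\circ\PP^f = \PP^g\circ(\tilde{\Psi}_{f,g}\otimes\tilde{\Psi}_{f,g})$ in homology, which is the claim.
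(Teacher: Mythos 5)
Your proposal is correct and follows essentially the same route as the paper: decompose $\PP$ into $K$, $D_!$, $\tilde{m}$, $p_*$, observe that all squares except the concatenation square commute by naturality of these operations under morphisms of fibrations, and reduce the $\tilde{m}$-square to the space-level homotopy $m\circ(\Psi_{f,g}\times\Psi_{f,g})\simeq \Psi_{f,g}\circ m$ given by cancelling the middle path $\eta(b)\eta(b)^{-1}$ up to homotopy rel endpoints. This is exactly the paper's argument, including the identification of the concatenation square as the only one requiring a genuine check.
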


\begin{proof}  
    
    To prove that it is an isomorphism of rings, we show that the following diagram commutes:

    $$\xymatrix@C=5em{
    H_*(X^2, \COY ;f) ^{\otimes 2} \ar[d]^K \ar[r]^-{\tilde{\varphi}_{f,g}^{\otimes 2}} &  H_*(X^2, \COY ;g) ^{\otimes 2} \ar[d]^K  \\
    H_*(X^4, \COYi{2} ; f) \ar[r]^{\widetilde{\varphi_{f,g} \times \varphi_{f,g}}} \ar[d]^{D_!} & H_*(X^4, \COYi{2} ; g) \ar[d]^{D_!} \\
    H_*(X^3, D^*\COYi{2} ; f) \ar[r]^{D^*\widetilde{\varphi_{f,g} \times \varphi_{f,g}}} \ar[d]^{\tilde{m}} & H_*(X^3, D^*\COYi{2} ; g) \ar[d]^{\tilde{m}} \\
    H_*(X^3, p^*\COY ; f) \ar[r]^{p^* \tilde{\varphi}_{f,g}} \ar[d]^{p_*} &  H_*(X^3, p^*\COY ; g) \ar[d]^{p_*} \\
    H_*(X^2, \COY, f) \ar[r]^{\tilde{\varphi}_{f,g}} & H_*(X^2,\COY, g).
    }$$

All the squares commute by basic properties about the maps induced by morphisms of fibrations. We only have to check that, in homology,
$$m_* \circ (\varphi_{f,g} \times \varphi_{f,g})_* = \varphi_{f,g,*} \circ m_* : H_*(\mathcal{P}_{X,f} \ftimes{ev_1}{ev_0} \mathcal{P}_{X,f}) \to H_*(\mathcal{P}_{X,g}).$$
Indeed, the maps 
$$\deffct{m \circ \varphi_{f,g} \times \varphi_{f,g}}{\mathcal{P}_{X,f} \ftimes{ev_1}{ev_0} \mathcal{P}_{X,f}}{\mathcal{P}_{X,g}}{(x_0,x_1,x_2,\gamma_1,\gamma_2)}{(x_0,x_2, \eta(x_0)^{-1} \gamma_1 \eta(x_1) \eta(x_1)^{-1} \gamma_2 \eta(x_2))}$$ and 
$$\deffct{\varphi_{f,g} \circ m}{\mathcal{P}_{X,f} \ftimes{ev_1}{ev_0} \mathcal{P}_{X,f}}{\mathcal{P}_{X,g}}{(x_0,x_1,x_2,\gamma_1,\gamma_2)}{(x_0,x_2,\eta(x_0)^{-1} \gamma_1\gamma_2 \eta(x_2)),}$$ are homotopic.
This concludes that  $\tilde{\Psi}_{f,g} : H_*(X, \COY ; f) \overset{\sim}{\to} H_*(X, \COY ; g)$ is an isomorphism of rings.
\end{proof}

\begin{prop}\label{prop : module iso homotopy type of f}
The isomorphism $$\tilde{\Psi}_{f,g} : H_*(X, \COY ; f) \overset{\sim}{\to} H_*(X, \COY ; g)$$  is an isomorphism of $H_*(Y, \COY)$-modules.
\end{prop}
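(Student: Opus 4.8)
The plan is to imitate the proof of Proposition \ref{prop : ring iso homotopy type of f}: one shows that the chain homotopy equivalence $\tilde{\Psi}_{f,g}$ intertwines the two module pairings $\cdot_M$ attached to $f$ and to $g$, i.e. that for $\gamma \in H_*(Y,\COY)$ and $\tau \in H_*(X^2,\COY;f)$ one has $\tilde{\Psi}_{f,g}(\gamma\cdot_M\tau) = \gamma\cdot_M\tilde{\Psi}_{f,g}(\tau)$. Note that the first slot is untouched, since $H_*(Y,\COY)$ together with its $\CS$-ring structure does not involve $f$, and that the normalization factor $(-1)^{k|\tau|}$ in the definition of $\cdot_M$ is the same on both sides because $\tilde{\Psi}_{f,g}$ has degree $0$; so no sign is created by the normalization. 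Writing $\F^i = \COYi{i}$, I would set up the ladder whose two columns are the three-step definitions of $\cdot_M$ through $I_f : X^2 \to Y\times X^2$, $I_f(x,x')=(f(x),x,x')$, and through $I_g$ respectively,
$$H_*(Y,\F)\otimes H_*(X^2,\F)\ \xrightarrow{K}\ H_*(Y\times X^2,\F^2)\ \xrightarrow{I_{f,!}}\ H_*(X^2,I_f^*\F^2)\ \xrightarrow{\tilde m}\ H_*(X^2,\F),$$
and whose horizontal arrows, from top to bottom, are $\Id\otimes\tilde{\Psi}_{f,g}$, the identification morphism $\widetilde{\Id_{\ls Y}\times\Psi_{f,g}}$ of $\Id_{\ls Y}\times\Psi_{f,g}:\ls Y\times\mathcal{P}_{X,f}Y\to\ls Y\times\mathcal{P}_{X,g}Y$ over $Y\times X^2$, the identification morphism $\tilde{\Xi}_{f,g}$ of the morphism of fibrations over $X^2$
$$\Xi_{f,g}:\ (x_0,x_1,\delta,\gamma)\ \longmapsto\ \bigl(x_0,x_1,\ \eta(x_0)^{-1}\delta\,\eta(x_0),\ \eta(x_0)^{-1}\gamma\,\eta(x_1)\bigr)$$
from $I_f^*(\ls Y\times\mathcal{P}_{X,f}Y)$ to $I_g^*(\ls Y\times\mathcal{P}_{X,g}Y)$, and finally $\tilde{\Psi}_{f,g}$ on $H_*(X^2,\F)$.

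The top square commutes by compatibility of the DG Künneth map $K$ with identification morphisms, exactly as in Proposition \ref{prop : ring iso homotopy type of f}. The bottom square reduces, via the Fibration Theorem, to checking at the topological level that $\Psi_{f,g,*}\circ m^f_* = m^g_*\circ\Xi_{f,g,*}$; this holds because the two maps of fibrations $(\delta,\gamma)\mapsto\eta(x_0)^{-1}\delta\,\gamma\,\eta(x_1)$ and $(\delta,\gamma)\mapsto\bigl(\eta(x_0)^{-1}\delta\,\eta(x_0)\bigr)\bigl(\eta(x_0)^{-1}\gamma\,\eta(x_1)\bigr)$ are homotopic through the contraction $\eta(x_0)\,\eta(x_0)^{-1}\simeq c_{g(x_0)}$, which is verbatim the argument used for the ring case and in the proof of Proposition \ref{prop : module structure over Chas-Sullivan product}. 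For the middle square I would observe that $\Xi_{f,g}$ factors as the pullback along $I_f$ of $\Id_{\ls Y}\times\Psi_{f,g}$, followed by the identification morphism associated with the straight-line isotopy $I_f\simeq I_g$ of codimension-$k$ embeddings (this factorization being a direct computation with the lifting functions, matched up to chain homotopy using Proposition \ref{prop : Identification morphism for fibrations}(\ref{item : 3})). The commutation then follows from the basic naturality of shriek maps with respect to coefficient morphisms, together with Proposition \ref{prop : Identification morphism for fibrations}(\ref{item : 4}) applied to the isotopy $I_f\simeq I_g$ with the fixed fibration $\ls Y\times\mathcal{P}_{X,g}Y$ over $Y\times X^2$; because this isotopy is through embeddings, the orientation of the normal bundle $N_{I_f}\cong TY\cong N_{I_g}$ is transported along, so no extra sign enters. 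Chasing the (sign-free) ladder gives $\tilde{\Psi}_{f,g}(\gamma\cdot_M\tau)=\gamma\cdot_M\tilde{\Psi}_{f,g}(\tau)$, and, combined with Proposition \ref{prop : module structure over Chas-Sullivan product} and Proposition \ref{prop : ring iso homotopy type of f}, this shows that $\tilde{\Psi}_{f,g}$ is an isomorphism of $H_*(Y,\COY)$-modules.

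The step I expect to be the main obstacle is the middle square. Unlike in the ring case of Proposition \ref{prop : ring iso homotopy type of f} — where the maps $K$, $D$, $p$ used to build the product are independent of $f$ — here the embedding $I_f$ itself varies with $f$, so this square is \emph{not} a literal naturality statement for the shriek map $I_!$. The delicate points are: choosing the identification morphism $\Xi_{f,g}$ on $I_f^*(\ls Y\times\mathcal{P}_{X,f}Y)$ so that it simultaneously covers $\Id_{X^2}$ and is compatible with $\widetilde{\Id_{\ls Y}\times\Psi_{f,g}}$ upstairs; verifying that the shriek maps of the two isotopic embeddings $I_f$ and $I_g$ agree through this identification (this is where Proposition \ref{prop : Identification morphism for fibrations}(\ref{item : 4}), i.e. \cite[Proposition 8.2.1]{BDHO23} transplanted to fibrations, is really needed); and keeping the codimension-$k$ normal-bundle orientation conventions under control throughout so that no spurious sign is introduced.
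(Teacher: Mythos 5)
Your proposal is correct and follows essentially the same route as the paper: the ladder built from the three-step definition of $\cdot_M$ through $I_f$ and $I_g$, with the middle step decomposed into naturality of $I_{f,!}$ with respect to the coefficient morphism $1\times\Psi_{f,g}$ plus the identification morphism $\tilde{\Psi}^H$ for the homotopy $H(a,b,t)=(\varphi_{f,g}(a,t),a,b)$ between $I_f$ and $I_g$ (Proposition \ref{prop : Identification morphism for fibrations}\,(\ref{item : 4})), and the bottom step handled by the contraction $\eta(x_0)\eta(x_0)^{-1}\simeq c_{g(x_0)}$. Your map $\Xi_{f,g}$ is exactly the composition $\Psi^H\circ I_f^*(1\times\Psi_{f,g})$ appearing in the paper's diagram, so the two arguments coincide.
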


\begin{proof}
We prove that the following diagram commutes:

$$
\xymatrix{
H_*(Y, C_*(\Omega Y)) \otimes H_*(X^2,C_*(\Omega Y) ;f) \ar[d]^-K \ar[r]^{1 \otimes \tilde{\Psi}_{f,g}} & H_*(Y, C_*(\Omega Y)) \otimes H_*(X^2,C_*(\Omega Y) ;g) \ar[d]^-K & \\
H_*(Y \times X^2, C_*(\Omega Y^2) ; f) \ar[d]^{I_{f,!}} \ar[r]^{\widetilde{1 \times \Psi_{f,g}}} & H_*(Y \times X^2, C_*(\Omega Y^2) ; g) \ar[d]^{I_{f,!}} \ar[dr]^{I_{g,!}} & \\
H_*(X^2, I_f^* C_*(\Omega Y^2) ;f) \ar[d]^{\tilde{m}} \ar[r]^{I_{f}^*\widetilde{1 \times \Psi_{f,g}}} &  H_*(X^2, I_f^* C_*(\Omega Y^2) ;g)  \ar[r]^{\tilde{\Psi}^{H}} & H_*(X^2, I_{g}^*C_*(\Omega Y) ;g) \ar[dl]^{\tilde{m}} \\
H_*(X^2,C_*(\Omega Y) ;f) \ar[r]^{\tilde{\Psi}_{f,g}} & H_*(X^2,C_*(\Omega Y) ;g) & 
}
$$

    where $H : X^2 \times [0,1] \to Y \times X^2$, $H(a,b,t) = (\varphi_{f,g}(a,t), a, b)$ is a homotopy between $I_f : X^2 \to Y \times X^2$, $I_f(a,b) = (f(a),a,b)$ and $I_g : X^2 \to Y \times X^2$, $I_g(a,b) = (g(a),a,b).$ Therefore $$\deffct{\Psi^H}{\ls{Y} \ftimes{\ev}{f(\ev_0)}\mathcal{P}_{X,g} Y}{\ls{Y} \ftimes{\ev}{g(\ev_0)}\mathcal{P}_{X,g} Y}{(\gamma,(x_0,x_1,\tau))}{\left(\eta(x_0)^{-1}\gamma \eta(x_0), (x_0,x_1,\tau)\right)}.$$

It follows from compatibility properties between morphisms of fibrations, shriek maps and $K$ that the two squares on the top commute. Proposition \ref{prop : Identification morphism for fibrations} 4. states that the triangle on the right commutes.
Since 
$$m \circ \Psi^H\circ(1 \times \Psi_{f,g}) : \ls{Y} \ftimes{\ev}{f(\ev_0)} \mathcal{P}_{X,f}Y \to \ls{Y} \ftimes{\ev}{f(\ev_0)} \mathcal{P}_{X,g}Y \to \ls{Y} \ftimes{\ev}{g(\ev_0)} \mathcal{P}_{X,g} Y \to  \mathcal{P}_{X,g} Y$$
$$m \circ \Psi^H\circ(1 \times \Psi_{f,g})(\gamma,(x_0,x_1,\tau)) = (x_0,x_1, \eta(x_0)^{-1}\gamma \eta(x_0)\eta(x_0)^{-1}\tau \eta(x_1))$$ is homotopic to 
$$\Psi_{f,g} \circ m : \ls{Y} \ftimes{\ev}{f(\ev_0)} \mathcal{P}_{X,f}Y \to \mathcal{P}_{X,f}Y \to \mathcal{P}_{X,g}Y $$
$$\Psi_{f,g} \circ m (\gamma,(x_0,x_1,\tau)) = (x_0,x_1,\eta(x_0)^{-1} \gamma \tau \eta(x_1)),$$

it follows that $\tilde{m} \circ \tilde{\Psi}^H\circ \widetilde{(1 \times \Psi_{f,g})} = \tilde{\Psi}_{f,g} \circ \tilde{m} : H_*(X^2, I_f^*C_*(\Omega Y^2);f) \to H_*(X^2,C_*(\Omega Y) ;g).$

This concludes that the diagram commutes and that $\tilde{\Psi}_{f,g} : H_*(X, \COY ; f) \overset{\sim}{\to} H_*(X, \COY ; g)$ is an isomorphism of $H_*(Y , C_*(\Omega Y))$-modules.

\end{proof}

\subsection{Homotopy invariance of the product}

Let $Y_1,Y_2$ be two topological spaces and $X_1 \overset{i_1}{\subset} Y_1$, $X_2 \overset{i_2}{\subset} Y_2$ be two pointed, oriented, closed, and connected manifolds. Assume that there exists a homotopy equivalence of pairs $\varphi : (Y_1, X_1) \to (Y_2, X_2)$ and denote $\psi : (Y_2, X_2) \to (Y_1, X_1)$ a homotopy inverse of $\varphi.$ Denote $n= \dim(X_1)=\dim(X_2)$ and $d = \deg(\varphi \lvert_{X_1}) \in \{-1,1\}.$

We will denote $\mathcal{P}\varphi : \mathcal{P}_{X_1} Y_1 \to \varphi^{2,*}\mathcal{P}_{X_2} Y_2 $ the morphism of fibrations over $X_1^2$ defined by $$\mathcal{P}\varphi(x_0,x_1,\gamma) = (x_0,x_1, \varphi \circ \gamma).$$

The map $\mathcal{P}\varphi $ is a homotopy equivalence and therefore induces an isomorphism $$\widetilde{\mathcal{P}\varphi} : H_*((X_1)^2,C_*(\Omega Y_1)) \to H_*((X_1)^2, \varphi^{2,*}C_*(\Omega Y_2)).$$

\begin{prop}\label{prop : ring iso homotopy equivalence}
    The map $$\varphi_{\#} = d \cdot (\widetilde{\mathcal{P}\varphi})^{-1} \circ (\varphi^2)_! : H_*(X_2^2, C_*(\Omega Y_2)) \to H_*(X_1^2, C_*(\Omega Y_1)) $$ is an isomorphism of rings.
\end{prop}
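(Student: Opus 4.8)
The strategy is to exhibit a commutative diagram that relates the product $\PP$ on $H_*(X_2^2,C_*(\Omega Y_2))$ to the product $\PP$ on $H_*(X_1^2,C_*(\Omega Y_1))$ via $\varphi_{\#}$, in the same spirit as the proof of Proposition \ref{prop : ring iso homotopy type of f}. Recall that $\PP$ is the composite $p_*\circ\tilde m\circ D_!\circ((-1)^{nj}K)$. The map $\varphi_{\#}$ is built from two pieces: the shriek map $(\varphi^2)_!$ of the degree-$d$ map $\varphi^2=\varphi|_{X_1}\times\varphi|_{X_1}:X_1^2\to X_2^2$, and the inverse of the identification isomorphism $\widetilde{\mathcal P\varphi}$ coming from the homotopy equivalence $\mathcal P\varphi$ over $X_1^2$ (as in Proposition \ref{prop : Identification morphism for fibrations}). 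I would first treat $(\varphi^2)_!$ and $\widetilde{\mathcal P\varphi}$ separately, checking each commutes with the four building blocks of $\PP$ up to controlled signs, and then assemble.

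First I would handle the identification part: since $\mathcal P\varphi(x_0,x_1,\gamma)=(x_0,x_1,\varphi\circ\gamma)$ is a morphism of fibrations over $X_1^2$ induced by post-composition, it commutes strictly with $K$, with the shriek map $D_!$ (shriek maps are defined by intersecting unstable/stable manifolds on the base, which is untouched), and with $p_*$. Compatibility with $\tilde m$ reduces, via the Fibration Theorem, to the homotopy-commutativity on the path-space side of the square relating the concatenation $m$ on $\mathcal P_{X_1}Y_1$ with the concatenation on $\mathcal P_{X_2}Y_2$; here $\varphi\circ(\gamma_1\#\gamma_2)=(\varphi\circ\gamma_1)\#(\varphi\circ\gamma_2)$ holds on the nose, so this is immediate. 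Hence $\widetilde{\mathcal P\varphi}$ intertwines the two $\PP$'s with no extra sign.

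Next I would handle $(\varphi^2)_!$. The new input here is how a shriek map along a degree-$d$ map between base manifolds interacts with the other three operations and, crucially, with the internal shriek map $D_!$ appearing in $\PP$. The commutation of $(\varphi^2)_!$ with $K$ is \cite[Lemma 6.17]{Rie24}-type bookkeeping; its commutation with $p_*$ and the concatenation-induced $\tilde m$ uses the composition properties of direct/shriek maps and the fact that $\varphi^2$ is compatible with the projection $p$ and with $D$ in the sense that $D\circ(\varphi^3)=(\varphi^4)\circ D$ and $p\circ(\varphi^3)=(\varphi^2)\circ p$ at the level of the relevant powers of $\varphi|_{X_1}$. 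The interaction of the external shriek $(\varphi^4)_!$ with the internal shriek $D_!$ is governed by Lemma \ref{lemme : commutativity shriek and direct} (or rather its shriek–shriek analogue, which follows from the composition property of shriek maps since $D\circ(\varphi^3)=(\varphi^4)\circ D$ and both are pullbacks of embeddings): the two iterated shriek maps agree, and the factor of $d$ in the definition of $\varphi_{\#}$ is exactly what is needed so that $\varphi_{\#}$ respects units — one uses $\varphi^2_*\circ(\varphi^2)_!=d\cdot\Id$ from \cite[Proposition 10.6.1]{BDHO23} to see $\varphi_{\#}(1_{\PP})=1_{\PP}$ after normalizing by $d$, and the factor $d^2=1$ makes the multiplicativity sign-free. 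I would also verify $\varphi_{\#}$ is bijective: $(\varphi^2)_!$ is injective since $d=\pm1$, and $\psi_{\#}$ built from a homotopy inverse $\psi$ provides a two-sided inverse after invoking the identification-morphism functoriality (Proposition \ref{prop : Identification morphism for fibrations}, items \ref{item : 1}–\ref{item : 3}) applied to $\psi\circ\varphi\simeq\Id$ and $\varphi\circ\psi\simeq\Id$.

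The main obstacle I anticipate is the sign and orientation bookkeeping when transporting orientations of normal bundles along $\varphi|_{X_1}$: one must check that the ``degree $d$'' twist of the shriek map is compatible with the Morse orientation conventions used in Lemma \ref{lemme : commutativity shriek and direct}, i.e. that $\Or\,N_{D}$ on $X_1^4$ and $\Or\,N_{D}$ on $X_2^4$ are matched by $\varphi^4$ up to the sign $d$ (per factor, hence $d^2=1$ overall for the two $Y$-copies involved in $D_!$, but $d$ for the base-intersection copy), and to confirm that the factor $d$ placed in front of $\varphi_{\#}$ absorbs all of these. Once this sign audit is clean, the commuting diagram assembles exactly as in Proposition \ref{prop : ring iso homotopy type of f} and multiplicativity $\varphi_{\#}\circ\PP=\PP\circ(\varphi_{\#}\otimes\varphi_{\#})$ follows; compatibility of $\varphi_{\#}$ with units is then the separate short computation using $\varphi^2_*\varphi^2_!=d\cdot\Id$ sketched above.
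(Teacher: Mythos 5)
Your overall architecture coincides with the paper's: a diagram whose outer columns are the two $\PP$-composites, linked on one side by the shriek maps $(\varphi^2)_!,(\varphi^3)_!,(\varphi^4)_!$ and on the other by the identification morphisms built from $\mathcal{P}\varphi$. Your treatment of the identification half is right (it is sign-free, and compatibility with $\tilde m$ reduces to $\varphi\circ(\gamma_1\gamma_2)=(\varphi\circ\gamma_1)(\varphi\circ\gamma_2)$), and the shriek--shriek square does commute on the nose by the composition property, since $D\circ\varphi^3=\varphi^4\circ D$. The gap is in where you locate the factor $d$. You assert that multiplicativity is ``sign-free'' because $d^2=1$, and that the prefactor $d$ in $\varphi_{\#}$ is there to normalize units via $\varphi^2_*\circ(\varphi^2)_!=d\cdot\Id$. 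That identity is already off: $\deg(\varphi^2)=d^2=1$, so $\varphi^2_*\circ(\varphi^2)_!=\Id$ and there is nothing to normalize on the unit side (indeed, unit preservation is automatic once the map is bijective and multiplicative, and the paper never discusses it here).

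The powers of $\varphi\lvert_{X_1}$ occurring in the diagram have degrees $\deg(\varphi^2)=1$, $\deg(\varphi^3)=d$, $\deg(\varphi^4)=1$, and the odd power is the real source of the factor $d$: it enters in the square mixing the external shriek maps with the internal \emph{direct} map $p_*$, namely the square with horizontal arrows $(\varphi^3)_!$ and $(\varphi^2)_!$ and vertical arrows $p_{2,*}$ and $p_{1,*}$. Since $(\varphi^3)_!=d^3(\varphi^3)_*^{-1}=d\,(\varphi^3)_*^{-1}$ while $(\varphi^2)_!=d^2(\varphi^2)_*^{-1}=(\varphi^2)_*^{-1}$, naturality of direct maps under $p\circ\varphi^3=\varphi^2\circ p$ gives $p_{1,*}\circ(\varphi^3)_!=d\cdot(\varphi^2)_!\circ p_{2,*}$, so this square commutes only up to $d$. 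Hence the un-normalized map $\widetilde{\mathcal{P}\varphi}^{-1}\circ(\varphi^2)_!$ intertwines the two products only up to the factor $d$, and the prefactor $d$ in the definition of $\varphi_{\#}$ is precisely what absorbs it (the $d^2=1$ on the tensor side then being harmless). As written, your plan declares the one square that genuinely produces $d$ to be clean, so the argument would not close; once you run the degree count $\varphi^k_*\circ(\varphi^k)_!=d^k\cdot\Id$ on that square, the rest of your assembly goes through as in Proposition \ref{prop : ring iso homotopy type of f}.
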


\begin{proof}
    It suffices to prove that the following diagram commutes up to the sign $d$.

$$
\xymatrix@C=35pt{
H_*((X_2)^2, C_*(\Omega Y_2))^{\otimes 2} \ar[r]^-{(\varphi^2)_!^{\otimes 2}} \ar[d]^-{K} & H_*((X_1)^2, \varphi^{2,*}C_*(\Omega Y_2))^{\otimes 2}  \ar[d]^-{K} & H_*((X_1)^2,C_*(\Omega Y_1)) \ar[d]^-K \ar[l]_-{\widetilde{\mathcal{P}\varphi}^{\otimes 2}}\\
H_*((X_2)^4,C_*(\Omega Y_2^2)) \ar[d]^-{D_{2,!}} \ar[r]^-{(\varphi^4)_!} & H_*((X_1)^4, \varphi^{4,*}C_*(\Omega Y_2^2)) \ar[d]^-{D_{1,!}}  & H_*((X_1)^4, C_*(\Omega Y_1^2)) \ar[d]^-{D_{1,!}} \ar[l]_-{\widetilde{\mathcal{P}\varphi \times \mathcal{P}\varphi}} \\
H_*((X_2)^3, D_2^* C_*(\Omega Y_2^2)) \ar[r]^-{(\varphi^3)_!} \ar[d]^-{\tilde{m}_2}& H_*((X_1)^3, \varphi^{3,*} D_1^* C_*(\Omega Y_2^2)) \ar[d]^-{\varphi^{3,*}\tilde{m}_2} & H_*((X_1)^3, D_1^*C_*(\Omega Y_1^2)) \ar[l]_-{D_1^* \widetilde{\mathcal{P}\varphi \times \mathcal{P}\varphi}} \ar[d]^-{\tilde{m}_1} \\
H_*((X_2)^3, p_2^*C_*(\Omega Y_2)) \ar[r]^-{(\varphi^3)_!} \ar@{}[rd]|{\blue{(*)}} \ar[d]^-{p_{2,*}} & H_*((X_1)^3, p_1^*\varphi^{2,*}C_*(\Omega X_2)) \ar[d]^-{p_{1,*}} & H_*((X_1)^3, p_1^*C_*(\Omega Y_1)) \ar[l]_-{p_{1}^*\widetilde{\mathcal{P}\varphi}} \ar[d]^-{p_{1,*}} \\
H_*((X_2)^2, C_*(\Omega Y_1)) \ar[r]^-{(\varphi^2)_!} & H_*((X_1)^2, \varphi^{2,*} C_*(\Omega Y_2)) & H_*((X_1)^2, C_*(\Omega Y_1)) \ar[l]_-{\widetilde{\mathcal{P}\varphi}}.
}
$$

The square $\blue{(*)}$ commutes up to the sign $d$ since $(\varphi^3)_! = d^3 (\varphi^3)_*^{-1} = d (\varphi^{3})_*^{-1}$ and $(\varphi^2)_! = d^2 (\varphi^{2})_*^{-1}= (\varphi^{2})_*^{-1} $.

\end{proof}

\begin{prop}\label{prop : module iso homotopy equivalence}
    The isomorphism $\varphi_{\#} : H_*((X_2)^2, C_*(\Omega Y_2)) \to H_*((X_1)^2, C_*(\Omega Y_1))$ satisfies the equation
    $$\forall \gamma \in H_*(Y_2, C_*(\Omega Y_2)), \forall \tau \in H_*((X_2)^2, C_*(\Omega Y_2)), \ \varphi_{\#}(\gamma \cdot_M \tau) = \varphi^{\mathcal{L}}_{\#}(\gamma) \cdot_M \varphi_{\#}(\tau),$$ 
    where $\varphi^{\mathcal{L}}_{\#} = \widetilde{\mathcal{L}\varphi}^{-1} \circ \varphi_! : H_*(Y_2, C_*(\Omega Y_2)) \to H_*(Y_1, C_*(\Omega Y_1))$ is the ring isomorphism induced by the homotopy equivalence $\varphi : Y_1 \to Y_2$ (see \cite[Proposition 7.13]{Rie24}).
\end{prop}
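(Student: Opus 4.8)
The plan is to follow the same pattern as the proofs of Propositions \ref{prop : ring iso homotopy type of f} and \ref{prop : ring iso homotopy equivalence}: the asserted identity is equivalent to the commutativity, up to a sign to be checked, of a large diagram, which one then verifies square by square. Write $k = \dim Y_1 = \dim Y_2$ (the two dimensions agree since $Y_1$ and $Y_2$ are homotopy equivalent closed oriented manifolds), $\Phi := \varphi \times (\varphi\lvert_{X_1})^2 : Y_1 \times X_1^2 \to Y_2 \times X_2^2$ and $\varphi^2 := (\varphi\lvert_{X_1})^2 : X_1^2 \to X_2^2$, and recall the inclusions $I_j : X_j^2 \hookrightarrow Y_j \times X_j^2$, $I_j(x,x') = (i_j(x),x,x')$, and the concatenations $m_j : \mathcal{L}Y_j \ftimes{\ev}{\ev_0} \mathcal{P}_{X_j}Y_j \to \mathcal{P}_{X_j}Y_j$ for $j \in \{1,2\}$. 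Since $\varphi$ is a map of pairs, $\varphi \circ i_1 = i_2 \circ (\varphi\lvert_{X_1})$, hence $\Phi \circ I_1 = I_2 \circ \varphi^2 : X_1^2 \to Y_2 \times X_2^2$. I would set up a diagram with three columns: the left column runs the three maps $K$, $I_{2,!}$, $\tilde{m}_2$ defining $\cdot_M$ for the pair $(Y_2, X_2)$ with coefficients in $C_*(\Omega Y_2)$; the right column runs $K$, $I_{1,!}$, $\tilde{m}_1$ for the pair $(Y_1,X_1)$ with coefficients in $C_*(\Omega Y_1)$; and the middle column runs $K$, $I_{1,!}$, $\tilde{m}$ over $X_1^2$ and $Y_1 \times X_1^2$ with coefficients pulled back along $\varphi$, $\Phi$ and $\varphi^2$. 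The horizontal maps from the left column to the middle one are the shriek maps $\varphi_!$, $\Phi_!$, $\varphi^2_!$, and those from the right column to the middle one are the chain homotopy equivalences $\widetilde{\mathcal{L}\varphi}$, $\widetilde{\mathcal{L}\varphi \times \mathcal{P}\varphi}$, $\widetilde{\mathcal{P}\varphi}$ induced by the corresponding morphisms of fibrations over $Y_1$, $Y_1 \times X_1^2$, $X_1^2$ (Proposition \ref{prop : Identification morphism for fibrations} and \cite[Theorem 5.8]{Rie24}). By the definitions of $\varphi^{\mathcal{L}}_{\#}$ (\cite[Proposition 7.13]{Rie24}) and of $\varphi_{\#}$, traversing this diagram from the left column to the right one along the top row recovers $\varphi^{\mathcal{L}}_{\#}$ on the $H_*(Y_2, C_*(\Omega Y_2))$ factor and, up to the scalar $d$ it carries, $\varphi_{\#}$ on the $H_*(X_2^2, C_*(\Omega Y_2))$ factor.

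Next I would check the elementary squares. The squares involving the Künneth map $K$ and the shriek maps $\varphi_!, \Phi_!, \varphi^2_!$ commute up to explicit signs by \cite[Lemma 6.17]{Rie24}, and those involving $K$ and the $\widetilde{\,\cdot\,}$-maps commute by naturality of $K$ with respect to morphisms of fibrations. The square involving $I_{1,!}, I_{2,!}$ and $\Phi_!, \varphi^2_!$ commutes up to sign because $\Phi \circ I_1 = I_2 \circ \varphi^2$; the fact that $\Phi$ need not be transverse to $I_2(X_2^2)$ — one has $\Phi^{-1}(I_2(X_2^2)) \supseteq I_1(X_1^2)$, possibly strictly — is dealt with exactly as for $D_1, D_2$ in the proof of Proposition \ref{prop : ring iso homotopy equivalence} (replacing $\varphi$ by a homotopic transverse representative, which does not change $\varphi_!$, $\widetilde{\mathcal{L}\varphi}$, $\widetilde{\mathcal{P}\varphi}$), using Lemma \ref{lemme : commutativity shriek and direct} together with the identity $\psi_! = (\deg \psi)\cdot \psi_*^{-1}$ for the equidimensional maps $\psi$ involved; the squares involving $I_!$ and the $\widetilde{\,\cdot\,}$-maps again commute by naturality. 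The only genuinely new ingredient is the bottom square, comparing $\tilde{m}_1$ and $\tilde{m}_2$. It commutes because post-composition by $\varphi$ commutes strictly with concatenation of Moore paths, $\varphi \circ (\gamma \# \tau) = (\varphi \circ \gamma)\#(\varphi \circ \tau)$, so that $\mathcal{P}\varphi \circ m_1 = m_2 \circ (\mathcal{L}\varphi \times \mathcal{P}\varphi)$ as morphisms of fibrations over $X_1^2$; the functoriality of the maps induced by morphisms of fibrations (\cite[Theorem 5.8]{Rie24} and Proposition \ref{prop : Identification morphism for fibrations}) then gives the commutativity up to chain homotopy of the corresponding square of twisted complexes. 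As in the proof of Proposition \ref{prop : module iso homotopy type of f}, no intermediate identification morphism $\tilde{\Psi}^H$ is required here, because $\Phi \circ I_1 = I_2 \circ \varphi^2$ holds strictly, so that $(\varphi^2)^* I_2^* C_*(\Omega Y_2^2) = I_1^* \Phi^* C_*(\Omega Y_2^2)$ with no bridging needed.

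Piecing the squares together gives commutativity of the full diagram up to a product of $\pm 1$ signs. The remaining — and really the only genuine — obstacle is to verify that these signs, together with the scalars $d$ (and the degree of $\varphi$, where it enters through $\varphi^{\mathcal{L}}_{\#}$), cancel out to give precisely the unsigned identity of the statement. This is a routine but delicate bookkeeping step, carried out in the same spirit as the sign computations in the associativity proof of $\PP$ and in Propositions \ref{prop : ring iso homotopy type of f} and \ref{prop : ring iso homotopy equivalence}; it is somewhat lighter here since $\cdot_M$ involves no direct map $p_*$, so the diagram itself produces no extra factor $d$. Once the signs are seen to cancel, chasing an element $\gamma \otimes \tau$ around the diagram yields $\varphi_{\#}(\gamma \cdot_M \tau) = \varphi^{\mathcal{L}}_{\#}(\gamma) \cdot_M \varphi_{\#}(\tau)$.
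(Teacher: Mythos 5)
Your proposal follows essentially the same route as the paper: the paper's proof consists precisely of the three-column diagram you describe (left column the $\cdot_M$ data for $(Y_2,X_2)$, right column for $(Y_1,X_1)$, middle column with coefficients pulled back along $\varphi$, $\varphi^3$, $\varphi^2$, with shriek maps going left-to-middle and the maps induced by $\mathcal{L}\varphi$, $\mathcal{L}\varphi\times\mathcal{P}\varphi$, $\mathcal{P}\varphi$ going right-to-middle), asserted to commute. Your square-by-square justifications and sign/degree bookkeeping supply details the paper leaves implicit, but the argument is the same.
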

\begin{proof}
Denote $\F_i^j = C_*(\Omega Y_i^j)$ and $\F_i = \F_i^1$ for $i \in \{1,2\}$ and $j \geq 0.$ The following diagram commutes and concludes the proof.
    $$\xymatrix{
        H_*(Y_2, \F_2) \otimes H_*((X_2)^2,\F_2) \ar[r]^-{\varphi_! \otimes \varphi^2_!} \ar[d]^K & H_*(Y_1, \varphi^*\F_2) \otimes H_*((X_1)^2, \varphi^{2,*} \F_2) \ar[d]^K &  H_*(Y_1, \F_1) \otimes H_*((X_1)^2, \F_1) \ar[d]^K \ar[l]_-{\widetilde{\mathcal{L}\varphi} \otimes \widetilde{\mathcal{P}\varphi}} \\
        H_*(Y_2\times (X_2)^2, \F_2^2) \ar[r]^-{ \varphi^3_!}  \ar[d]^-{I_{2,!}} &  H_*(Y_1 \times (X_1)^2, \varphi^{3,*} \F_2^2)  \ar[d]^-{I_{1,!}} &   H_*(Y_1\times (X_1)^2, \F_1^2) \ar[l]_-{\widetilde{\mathcal{L} \varphi \times \mathcal{P}\varphi}}  \ar[d]^-{I_{1,!}} \\
        H_*((X_2)^2, I_2^*\F_2^2) \ar[r]^-{\varphi^3_!} \ar[d]^-{\tilde{m}} &  H_*((X_1)^2, I_1^*\varphi^{3,*}\F_1^2) \ar[d]^-{\varphi^{2,*}\tilde{m}}& H_*((X_1)^2, I_2^*\F_1^2) \ar[l]_-{I_1^*\widetilde{\mathcal{L} \varphi \times \mathcal{P}\varphi}} \ar[d]^-{\tilde{m}}\\
        H_*((X_2)^2, \F_2) \ar[r]^-{\varphi^2_!} & H_*((X_1)^2, \varphi^{2,*}\F_2) & H_*((X_1)^2, \F_1) \ar[l]_-{\widetilde{\mathcal{P}\varphi}}
    }$$
\end{proof}

\setcounter{secnumdepth}{-1}

\bibliography{mybib}
\bibliographystyle{alpha}

\end{document}